\documentclass[12pt, reqno]{amsart}
\usepackage{latexsym,amsmath,amsopn,amssymb,amsthm,amsfonts}
\usepackage[T2A]{fontenc}
\usepackage[cp1251]{inputenc}
\usepackage[english]{babel}
\usepackage[backref=page, breaklinks=true,colorlinks=true,linkcolor=blue,citecolor=blue,urlcolor=blue]{hyperref}

\usepackage{graphicx}

\renewenvironment{proof}[1][Proof]{\textbf{#1.} }{\ \rule{0.5em}{0.5em}}

\textwidth 155mm
\textheight 226mm

\voffset -1.1cm
\hoffset -1.5cm

\DeclareMathOperator{\GT}{GT}
\DeclareMathOperator{\DT}{DT}
\DeclareMathOperator{\DF}{DF}
\DeclareMathOperator{\BFT}{BFT}
\DeclareMathOperator{\CFT}{CFT}
\DeclareMathOperator{\Cyl}{Cyl}
\DeclareMathOperator{\sgn}{sgn}

\renewenvironment{proof}[1][Proof]{\textbf{#1.} }
{\ \rule{0.5em}{0.5em}}
\newtheorem{theorem}{Theorem}
\newtheorem{prop}{Proposition}
\newtheorem{lemma}{Lemma}
\newtheorem{corollary}{Corollary}

\theoremstyle{definition}
\newtheorem{definition}{Definition}
\newtheorem{remark}{Remark}
\newtheorem{example}{Example}

\newtheorem{problem}{Problem}

\begin{document}
\title
[On face angles of tetrahedra with a given base]
{On face angles of tetrahedra with a given base}
\author{E.V.~Nikitenko, Yu.G.~Nikonorov}

\address{Nikitenko Evgeni\u\i\  Vitalievich\newline
Rubtsovsk Industrial Institute of\newline
Altai State Technical University \newline
after I.I.~Polzunov \newline
Rubtsovsk, Traktornaya st., 2/6, \newline
658207, Russia}
\email{evnikit@mail.ru}

\address{Nikonorov Yuri\u{\i} Gennadievich \newline
Southern Mathematical Institute of \newline
the Vladikavkaz Scientific Center of \newline
the Russian Academy of Sciences, \newline
Vladikavkaz, Vatutina st., 53, \newline
362025, Russia}
\email{nikonorov2006@mail.ru}

\begin{abstract}
Let us consider the set $\Omega (\triangle ABC)$ of all tetrahedra $ABCD$ with a given non-degenerate base $ABC$ in
$\mathbb{E}^3$ and $D$ lying outside the plane $ABC$. Let us denote by
$\Sigma(\triangle ABC)$ the set $\left\{\Bigl(\cos \overline{\alpha},\cos \overline{\beta},\cos \overline{\gamma} \Bigr)\in
\mathbb{R}^3\,|\, ABCD \in \Omega (\triangle ABC)\right\}$,
where  $\overline{\alpha}=\angle BDC$, $\overline{\beta}=\angle ADC$, and $\overline{\gamma}=\angle ADB$.
The paper is devoted to the problem of determining the closure of $\Sigma(\triangle ABC)$ in $\mathbb{R}^3$ and its boundary.

\vspace{2mm}
\noindent
2020 Mathematical Subject Classification:
21M20, 51M16, 51M25, 53A04, 53A05, 57N35

\vspace{2mm} \noindent Key words and phrases: angle, base, Cayley--Menger determinant, Euclidean space, extremal problems, inscribed quadrilateral, tetrahedron, triangle.
\end{abstract}

\maketitle

\section{Introduction}

\begin{figure}[t]
\center{\includegraphics[width=0.5\textwidth]{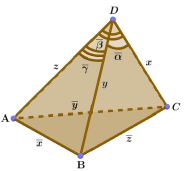}}\\
\caption{Tetrahedron $ABCD$.}
\label{jfp00}
\end{figure}

This paper is devoted to the study of some properties of tetrahedra in $3$-dimensional Euclidean space $\mathbb{E}^3$.
In what follows, $d(L,M)$ denotes the Euclidean distance between the points $L,M \in \mathbb{E}^3$.

Let us fix a non-degenerate triangle $ABC$ in $\mathbb{E}^3$ with $d(A,B)=\overline{x}$, $d(B,C)=\overline{z}$,  and $d(A,C)=\overline{y}$.
For a tetrahedron $ABCD$ in $\mathbb{E}^3$ with the base $ABC$, we will use the following notation:
$d(D,A)=z$, $d(D,B)=y$, $d(D,C)=x$, $\angle ADB=\overline{\gamma}$, $\angle ADC=\overline{\beta}$, and $\angle BDC=\overline{\alpha}$
(see Fig. \ref{jfp00}).
It is easy to check that the numbers $x=d(D,C)$, $y=d(D,B)$, and  $z=d(D,A)$ characterize the point $D$,
up to the symmetry with respect to the plane $ABC$.
\smallskip

Let us consider the set $\Omega (\triangle ABC)$ of all tetrahedra $ABCD$ with a given non-degenerate base $ABC$ in $\mathbb{E}^3$ and $D$ lying outside the plane $ABC$.
Obviously, $\Omega (\triangle ABC)$ is naturally parameterized by points $D \in  \mathbb{E}^3 \setminus \Pi^0$, where $\Pi^0$ is the plane $ABC$.
Moreover, if $\Pi^{-}$ and $\Pi^{+}$ are two distinct half-spaces determined by the plane $\Pi^0$,
then it is sufficient to consider points $D$ in only one of these half-spaces.
\smallskip

The main problem studied in this paper is as follows:

\begin{problem} \label{prob.1}
What is the set
\begin{equation}\label{eq.sigma.1}
\Sigma(\triangle ABC):=\left\{\Bigl(\cos \overline{\alpha},\cos \overline{\beta},\cos \overline{\gamma} \Bigr)\in
\mathbb{R}^3\,|\, ABCD \in \Omega (\triangle ABC)\right\}\,,
\end{equation}
where  $\overline{\alpha}=\angle BDC$, $\overline{\beta}=\angle ADC$, and $\overline{\gamma}=\angle ADB$\,?
\end{problem}

Since the angles $\overline{\alpha}, \overline{\beta}, \overline{\gamma} \in (0,\pi)$, then they are completely determined by their cosines.
We take the cosines instead of the angles in \eqref{eq.sigma.1} only for technical reason (in order to simplify calculations).
\newpage

Let us recall some known results in this direction. It is well known that
\begin{equation}\label{eq.angle.1}
\overline{\alpha} <\overline{\beta} +\overline{\gamma}, \quad \overline{\beta}<\overline{\alpha}+\overline{\gamma}, \quad \overline{\gamma}<\overline{\alpha}+ \overline{\beta}, \quad \overline{\alpha}+\overline{\beta}+\overline{\gamma} <2\pi
\end{equation}
for all non-degenerate tetrahedra $ABCD$. Moreover, recently M.Q.~Rieck obtained the following result.

\begin{prop}[\cite{Rieck}]\label{prop.rieck}
For any non-degenerate tetrahedron $ABCD$ with an acute-angled base $ABC$, we have the following statements {\rm(}in the above notations{\rm)}:
\begin{enumerate}
\item $\angle BAC+\overline{\beta}+\overline{\gamma} <2\pi$,
\item $\overline{\alpha} \leq \angle BAC$ implies $\overline{\beta} \leq \max \{\angle ABC, \angle ACB +\overline{\alpha}\}$,
\item $\overline{\alpha} \leq \angle BAC$ implies $\cos \angle ACB \cdot \cos \overline{\beta} +\cos \angle ABC \cdot \cos \overline{\gamma} >0$,
\end{enumerate}
as well as all statements obtained from these three ones by simultaneous cyclic permutations
$\overline{\alpha} \to \overline{\beta} \to \overline{\gamma} \to \overline{\alpha}$ and $\angle BAC \to \angle ABC \to \angle ACB \to \angle BAC$.
\end{prop}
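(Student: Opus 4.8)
The plan is to read every angle off the intrinsic geometry of the tetrahedron and to treat the three assertions separately, since only the first is genuinely global while the other two rest on the hypothesis $\overline{\alpha}\le\angle BAC$. For assertion (1) I would argue purely through the vertex figures and dispense with acuteness altogether. Consider the two lateral faces $ABD$ and $ACD$ sharing the edge $AD$. In the triangle $ABD$ the angles sum to $\pi$, so $\overline{\gamma}=\angle ADB=\pi-\angle BAD-\angle ABD$, and likewise $\overline{\beta}=\angle ADC=\pi-\angle CAD-\angle ACD$ in $ACD$. Adding these gives $\overline{\beta}+\overline{\gamma}=2\pi-(\angle BAD+\angle CAD)-(\angle ABD+\angle ACD)$, hence
$$\angle BAC+\overline{\beta}+\overline{\gamma}=2\pi+\bigl(\angle BAC-\angle BAD-\angle CAD\bigr)-\bigl(\angle ABD+\angle ACD\bigr).$$
The three face angles at the vertex $A$ are the side lengths of the non-degenerate spherical triangle cut out on a small sphere centred at $A$, so they obey the spherical triangle inequality $\angle BAC<\angle BAD+\angle CAD$; and $\angle ABD,\angle ACD>0$. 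Both bracketed terms are therefore negative, which yields assertion (1); note that this holds for every non-degenerate tetrahedron, with or without an acute base.

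For assertion (3) I would first recast it as a single dot-product inequality. Writing $\widehat{DA},\widehat{DB},\widehat{DC}$ for the unit vectors issuing from $D$, one has $\cos\overline{\beta}=\widehat{DA}\cdot\widehat{DC}$ and $\cos\overline{\gamma}=\widehat{DA}\cdot\widehat{DB}$, so the claim is exactly $\widehat{DA}\cdot\bigl(\cos\angle ACB\,\widehat{DC}+\cos\angle ABC\,\widehat{DB}\bigr)>0$. I would then clear the positive factor $d(D,A)$, project $D$ orthogonally onto the base plane, $D\mapsto D'$, and split each resulting dot product via $(P-D)\cdot(Q-D)=(P-D')\cdot(Q-D')+h^2$, where $h=d(D,D')>0$. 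The height contributes the term $h^2\bigl(\tfrac{\cos\angle ACB}{d(D,C)}+\tfrac{\cos\angle ABC}{d(D,B)}\bigr)$, which is strictly positive precisely because the base is acute, while what remains is a planar expression in the vectors $D'-A,\,D'-B,\,D'-C$. The crux is then the planar core: for a point $P$ in the plane of $ABC$ with $\angle BPC\le\angle BAC$ one must show $\cos\angle ACB\cos\angle APC+\cos\angle ABC\cos\angle APB\ge 0$. Here the hypothesis acquires a clean meaning through the inscribed angle theorem: the locus $\angle BPC=\angle BAC$ is the arc of the circumcircle of $ABC$ through $A$, so $\angle BPC\le\angle BAC$ forces $P$ outside that circle, and I would close the planar estimate by a power-of-a-point and inscribed-quadrilateral computation on this circle.

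Assertion (2) I would approach by the same circle geometry, the maximum reflecting two regimes for the position of $D'$ relative to the circumcircle and to the line $AC$. After converting, via the inscribed angle theorem, the hypothesis $\overline{\alpha}\le\angle BAC$ into a containment of $D'$ in an explicit region, I expect the bound $\overline{\beta}\le\angle ABC$ to hold on one part of that region and $\overline{\beta}\le\angle ACB+\overline{\alpha}$ on the other, each following from an elementary angle chase in the triangles $APC$ and $BPC$ together with the monotone decrease of the subtended angle $\overline{\beta}$ in the height $h$. All of the remaining statements listed in the proposition then follow by the stated simultaneous cyclic permutations, since each step above is symmetric under relabelling the base vertices.

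The main obstacle is the planar core underlying (2) and (3): unlike (1), these assertions genuinely couple the directions from $D$ with the edge lengths, so they cannot be read off from the vertex figures alone. Rewriting $\overline{\alpha}\le\angle BAC$ as the circumcircle condition is the device that decouples them, but verifying the resulting plane inequalities with the correct strictness—and, for (2), determining which branch of the maximum is active—will demand the careful case analysis on the location of $D'$ relative to the circumscribed circle that I expect to be the technical heart of the argument.
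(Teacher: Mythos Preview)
The paper does not prove this proposition at all: it is quoted verbatim from Rieck's paper and carried as background, so there is no ``paper's own proof'' to compare against. Your proposal must therefore be judged on its own merits.

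Your argument for assertion (1) is complete and correct, and your observation that acuteness is unnecessary there is a genuine bonus.

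For assertions (2) and (3), however, the projection scheme has a structural gap. After you split $(P-D)\cdot(Q-D)=(P-D')\cdot(Q-D')+h^{2}$, the remaining ``planar'' terms still carry the \emph{spatial} denominators $d(D,B)$ and $d(D,C)$, so what you call the planar core is not the expression $\cos\angle ACB\cos\angle AD'C+\cos\angle ABC\cos\angle AD'B$; the weights are wrong, and the mismatch cannot be absorbed into the positive $h^{2}$ term without a further inequality you have not supplied. More seriously, the hypothesis $\overline{\alpha}=\angle BDC\le\angle BAC$ concerns the angle at the apex $D$, not at its foot $D'$. Since the subtended angle only \emph{increases} under orthogonal projection (for fixed $B,C$ the angle $\angle BDC$ is maximised along a vertical line at the base point), one has $\angle BD'C\ge\angle BDC$, and hence $\overline{\alpha}\le\angle BAC$ gives no bound on $\angle BD'C$ at all. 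In particular it does not place $D'$ outside the circumcircle, so the inscribed-angle/power-of-a-point machinery you invoke for the planar core has no purchase. The same defect undermines the regime split you propose for (2).

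To repair this you would need to work with the genuine three-dimensional locus $\{\angle BDC=\angle BAC\}$---the toroid $\mathbb{T}_{BC}^{\angle BAC}$ in the paper's language---rather than its planar shadow. That is essentially the route Rieck takes, and it is considerably more involved than a projection-and-angle-chase.
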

\medskip

It is interesting and helpful to compare Problem \ref{prob.1} with the following one:

\begin{problem} \label{prob.2}
What is the set
\begin{equation}\label{eq.gamma.1}
\Upsilon(\triangle ABC):=\left\{\Bigl(x,y,z \Bigr)\in \mathbb{R}^3\,|\, ABCD \in \Omega (\triangle ABC)\right\}\,,
\end{equation}
where $x=d(D,C)$, $y=d(D,B)$, and $z=d(D,A)$\,?
\end{problem}

The solution of this problem is well known:
$(x,y,z)\in \Upsilon(\triangle ABC)$ if and only if the Cayley--Menger determinant
\begin{equation}\label{eq.CM.1}
 \mathbb{D}=\begin{vmatrix} 0 & x^2 & y^2 & z^2 & 1\\
                                   x^2 & 0 & \overline{z}^2 & \overline{y}^2 & 1\\
                                   y^2 & \overline{z}^2 & 0 & \overline{x}^2 & 1\\
                                   z^2 & \overline{y}^2 & \overline{x}^2 & 0 & 1\\
                                   1 & 1 & 1 & 1 & 0
        \end{vmatrix}
\end{equation}
is positive.
Moreover, the following result is well known (see e.g. P.~163 in \cite{WirDr2009}):

\begin{prop}\label{pr.CM.1}
A sextuple $(x,y,z,\overline{x},\overline{y},\overline{z})$ of positive numbers are the edge lengths of some tetrahedra in $\mathbb{E}^3$
if and only if the following two conditions are fulfilled:
\begin{enumerate}
\item $\mathbb{D}$ is positive,
\item $\mathbb{D}_1$ is negative, where $\mathbb{D}_1$ is the $(1,1)$-minor of $\mathbb{D}${\rm :}
 $$
 \mathbb{D}_1=\begin{vmatrix}  0 & \overline{z}^2 & \overline{y}^2 & 1\\
                                       \overline{z}^2 & 0 & \overline{x}^2 & 1\\
                                       \overline{y}^2 & \overline{x}^2 & 0 & 1\\
                                        1 & 1 & 1 & 0
        \end{vmatrix}.
 $$
 \end{enumerate}
\end{prop}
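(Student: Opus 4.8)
The plan is to reduce everything to the classical Cayley–Menger volume formula, which expresses each determinant as a constant multiple of a squared simplex content. Recall that for $k+1$ points in $\mathbb{E}^3$ spanning a $k$-dimensional simplex of content $V_k$, the associated bordered Cayley–Menger determinant equals $(-1)^{k+1}2^k(k!)^2 V_k^2$. Applying this with $k=3$ to the four vertices gives $\mathbb{D}=288\,V^2$, where $V$ is the volume of $ABCD$; applying it with $k=2$ to the triple $A,B,C$ — whose Cayley–Menger determinant is precisely the minor $\mathbb{D}_1$ — gives $\mathbb{D}_1=-16\,S^2$, where $S$ is the area of the base $ABC$. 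With these two identities the forward implication is immediate: if the sextuple is realized by a non-degenerate tetrahedron, then $V>0$ and $S>0$, whence $\mathbb{D}=288V^2>0$ and $\mathbb{D}_1=-16S^2<0$.

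For the converse I would argue constructively. First, $\mathbb{D}_1<0$ forces $S^2=-\mathbb{D}_1/16>0$, so (the numbers $\overline{x},\overline{y},\overline{z}$ being positive) the strict triangle inequalities hold and a genuine triangle $ABC$ with these side lengths exists; I fix a copy of it in a plane $\Pi^0\subset\mathbb{E}^3$ and choose Cartesian coordinates with $\Pi^0=\{w=0\}$. A point $D=(u,v,w)$ at the prescribed distances from $A,B,C$ must have its in-plane part $(u,v)$ satisfy the linear system obtained by pairwise subtraction of the three distance equations (the term $w^2$ cancels). Since $A,B,C$ are non-collinear, this system has a unique solution, namely the orthogonal projection of the sought $D$ onto $\Pi^0$; back-substitution then yields a single well-defined value $W$ for $w^2$, the common value of $z^2-|(u,v)-A|^2$, $y^2-|(u,v)-B|^2$, and $x^2-|(u,v)-C|^2$.

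The key step is the height identity
\[ W=-\frac{\mathbb{D}}{2\,\mathbb{D}_1}. \]
On the nonempty open set of sextuples that do arise from genuine tetrahedra, the volume formula gives $V=\tfrac13 S\,|w|$, hence $W=w^2=9V^2/S^2=(\mathbb{D}/288)/(-\mathbb{D}_1/16)$, which simplifies to the displayed expression. As $W$ and $-\mathbb{D}/(2\mathbb{D}_1)$ are both rational functions of the six squared edge lengths that agree on this open set, they agree wherever $\mathbb{D}_1\neq0$. Granting the identity, the hypotheses $\mathbb{D}>0$ and $\mathbb{D}_1<0$ give $W>0$, so $w=\sqrt{W}$ is real and nonzero; then $D=(u,v,w)$ lies off $\Pi^0$ and realizes all three prescribed distances, producing a genuine tetrahedron with the required edge lengths. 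No separate check of the remaining faces is needed: once $D$ is an honest point at positive height over a valid base, every face is automatically a valid triangle, which explains why controlling a single minor $\mathbb{D}_1$ suffices.

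The main obstacle is precisely this height identity. If one prefers to avoid the density argument, it may be replaced by a direct evaluation: solving the linear system by Cramer's rule and substituting back expresses $W$ as a ratio of two determinants, which one identifies with $-\mathbb{D}$ and $2\mathbb{D}_1$ after elementary row and column operations. Either route is routine once the coordinate setup is in place; the entire conceptual content lies in recognizing $\mathbb{D}$ and $\mathbb{D}_1$ as (constant multiples of) the squared volumes of the tetrahedron and of its base.
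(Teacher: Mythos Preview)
Your argument is correct and essentially complete. The paper, however, does not prove this proposition at all: it is quoted as a ``well known'' fact with a citation to Wirth--Dreiding (2009), and is then used only as background for the characterization of the set $\Upsilon(\triangle ABC)$. So there is no proof in the paper to compare yours against.

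A couple of minor remarks on your write-up. First, the density argument for the height identity $W=-\mathbb{D}/(2\mathbb{D}_1)$ is legitimate, but the phrase ``both rational functions of the six squared edge lengths'' deserves one more sentence of justification: in the coordinate model $A=(0,0,0)$, $B=(\overline{x},0,0)$, $C=(c_1,c_2,0)$ the individual quantities $u,v,c_1,c_2$ involve $\overline{x}$ and $c_2$ rather than their squares, but $u^2$ and $v^2$ (hence $W=z^2-u^2-v^2$) do reduce to rational expressions in $\overline{x}^2,\overline{y}^2,\overline{z}^2,x^2,y^2,z^2$ once one notes that $c_2^2=\overline{y}^2-c_1^2$ is itself such an expression. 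Second, your passing claim that $\mathbb{D}_1<0$ forces the strict triangle inequalities for positive $\overline{x},\overline{y},\overline{z}$ is correct: $-\mathbb{D}_1$ factors as the Heron product, and for positive reals at most one of the three factors $(-\overline{x}+\overline{y}+\overline{z})$, $(\overline{x}-\overline{y}+\overline{z})$, $(\overline{x}+\overline{y}-\overline{z})$ can be nonpositive, so positivity of the product forces all three to be positive. Including these checks would make the proof self-contained.
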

Note that the condition (2) in Proposition \ref{pr.CM.1} is equivalent to the fact that the positive numbers
$\overline{x}$, $\overline{y}$, $\overline{z}$ are the side lengths of some non-degenerate triangle in $\mathbb{E}^2$.

Let us recall some classical formulas for the volume of a tetrahedron, which will be very useful to us.
If $V$ is the volume of the tetrahedron $ABCD$, then we have
\begin{eqnarray}\label{eq.vol.1}
V^2\!\!\!&=\!\!\! &\frac{1}{36} x^2y^2z^2 \Bigl(1 + 2 \cos \overline{\alpha}\cdot  \cos \overline{\beta}\cdot \cos \overline{\gamma}
- \cos^2 \overline{\alpha} - \cos^2 \overline{\beta} - \cos^2 \overline{\gamma} \Bigr)\\
\label{eq.vol.2}
\!\!\!&=\!\!\!&\frac{1}{9} x^2y^2z^2 \Bigl(\sin \frac{\overline{\alpha} + \overline{\beta}
+\overline{\gamma}}{2} \cdot \sin \frac{\overline{\alpha} + \overline{\beta} -\overline{\gamma}}{2} \cdot \sin \frac{\overline{\alpha}
+\overline{\gamma}-\overline{\beta} }{2}
\cdot \sin \frac{\overline{\beta} +\overline{\gamma}-\overline{\alpha}}{2}\Bigr),
\end{eqnarray}
see Problems IV and VI in Note 5 (pp. 249, 251) of \cite{Legendre}. Note also that $\mathbb{D} = 288V^2$ for the Cayley--Menger determinant
\eqref{eq.CM.1}, see, e.g., \cite{WirDr2014}.
\smallskip

In order to get a solution of Problem \ref{prob.1}, we are going to use the following idea:
We consider the closure of the set $\Sigma(\triangle ABC)$ in $\mathbb{R}^3$ and get an explicit
description of the boundary of this closure.
After this step, the solution of the original problem can already be obtained by standard methods.

The following result is the starting point for our further research.

\begin{theorem}\label{theo.1}
For any fixed base $ABC$, the closure of the set $\Sigma(\triangle ABC)$ is contained in the set
$$
\mathbb{P}=\left\{(x,y,z)\in [-1,1]^3\,|\,1+2xyz-x^2-y^2-z^2 \geq 0 \right\}.
$$
\end{theorem}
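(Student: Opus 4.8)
The plan is to read the defining inequality of $\mathbb{P}$ directly off the volume formula \eqref{eq.vol.1}, so that the containment becomes essentially a one-line consequence of the fact that a non-degenerate tetrahedron has positive volume. First I would fix a point of $\Sigma(\triangle ABC)$; by definition it has the form $(\cos \overline{\alpha},\cos \overline{\beta},\cos \overline{\gamma})$ for some non-degenerate tetrahedron $ABCD$ with base $ABC$ and $D$ off the plane $ABC$. Since the face angles at the apex satisfy $\overline{\alpha},\overline{\beta},\overline{\gamma}\in(0,\pi)$, their cosines lie in $(-1,1)$, so $\Sigma(\triangle ABC)\subseteq (-1,1)^3\subseteq[-1,1]^3$; this already secures the cube constraint defining $\mathbb{P}$.

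Next I would invoke \eqref{eq.vol.1}. Writing the apex edge lengths as $x=d(D,C)$, $y=d(D,B)$, $z=d(D,A)$, all strictly positive, and letting $V>0$ denote the volume of the non-degenerate tetrahedron, that identity reads $36V^2=x^2y^2z^2\bigl(1+2\cos\overline{\alpha}\cos\overline{\beta}\cos\overline{\gamma}-\cos^2\overline{\alpha}-\cos^2\overline{\beta}-\cos^2\overline{\gamma}\bigr)$. Dividing by the positive quantity $x^2y^2z^2$ shows that the bracketed factor equals $36V^2/(x^2y^2z^2)>0$. Hence every point of $\Sigma(\triangle ABC)$ satisfies even the strict inequality $1+2\cos\overline{\alpha}\cos\overline{\beta}\cos\overline{\gamma}-\cos^2\overline{\alpha}-\cos^2\overline{\beta}-\cos^2\overline{\gamma}>0$, so $\Sigma(\triangle ABC)$ lies in the open region cut out by the strict version of the inequality and, a fortiori, inside $\mathbb{P}$.

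Finally, to pass to the closure, I would note that $\mathbb{P}$ is a closed subset of $\mathbb{R}^3$: it is the intersection of the closed cube $[-1,1]^3$ with the set $\{1+2xyz-x^2-y^2-z^2\geq 0\}$, and the latter is closed because the polynomial $(x,y,z)\mapsto 1+2xyz-x^2-y^2-z^2$ is continuous and the defining inequality is non-strict. Since $\Sigma(\triangle ABC)\subseteq\mathbb{P}$ and the closure is the smallest closed set containing $\Sigma(\triangle ABC)$, it follows that the closure of $\Sigma(\triangle ABC)$ is contained in $\mathbb{P}$, which is exactly the assertion.

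I do not expect any serious obstacle here; the whole content is the observation that the cubic factor appearing in the volume formula \eqref{eq.vol.1} is precisely the polynomial defining $\mathbb{P}$, together with the positivity $V>0$. The only point demanding care is the purely notational clash between the coordinates $(x,y,z)$ used in the definition of $\mathbb{P}$ and the apex edge lengths $x,y,z$ in \eqref{eq.vol.1}: these are unrelated quantities and must be kept distinct in the write-up. The genuinely hard work, which this theorem only prepares, will be the reverse inclusion and the explicit description of the boundary of the closure of $\Sigma(\triangle ABC)$, since the present statement merely bounds the set from outside.
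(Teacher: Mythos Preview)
Your proposal is correct and follows essentially the same route as the paper: both observe that $\mathbb{P}$ is closed and then read the defining inequality directly from the volume formula \eqref{eq.vol.1}, using $V^2\geq 0$. Your write-up is slightly more detailed (you spell out the cube constraint and the closedness of $\mathbb{P}$), but the argument is the same.
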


\begin{proof}
Since $\mathbb{P}$ is a closed subset in $\mathbb{R}^3$, it suffices to prove that for any tetrahedron $ABCD$ we have
\begin{equation*}
1 + 2 \cos \overline{\alpha} \cdot  \cos \overline{\beta} \cdot \cos \overline{\gamma}
- \cos^2 \overline{\alpha} - \cos^2 \overline{\beta} - \cos^2 \overline{\gamma} \geq 0,
\end{equation*}
where
$\cos \overline{\alpha}=\cos \angle BDC$, $\cos \overline{\beta}=\cos \angle ADC$, and $\cos \overline{\gamma}=\cos \angle  ADB$.
Now, it suffices to note that the inequality we need easily follows from \eqref{eq.vol.1}.
\end{proof}
\smallskip

As will be shown below, the closure of a set  $\Sigma(\triangle ABC)$ is a proper subset of $\mathbb{P}$
for any given base. However, the structure of this closure depends significantly on the chosen base.
In particular, there is a fundamental difference between acute-angled, rectangular, and obtuse-angled bases (see details in the last section).

The results of this paper could potentially be of practical use in areas such as
molecular geometry, tetrahedral finite-element meshes, camera-tracking, and the so-called ``perspective $3$-point problem'' \cite{HLON, Rieck15, RW, WMMS}.

\begin{figure}[t]
\begin{minipage}[h]{0.41\textwidth}
\center{\includegraphics[width=0.97\textwidth]{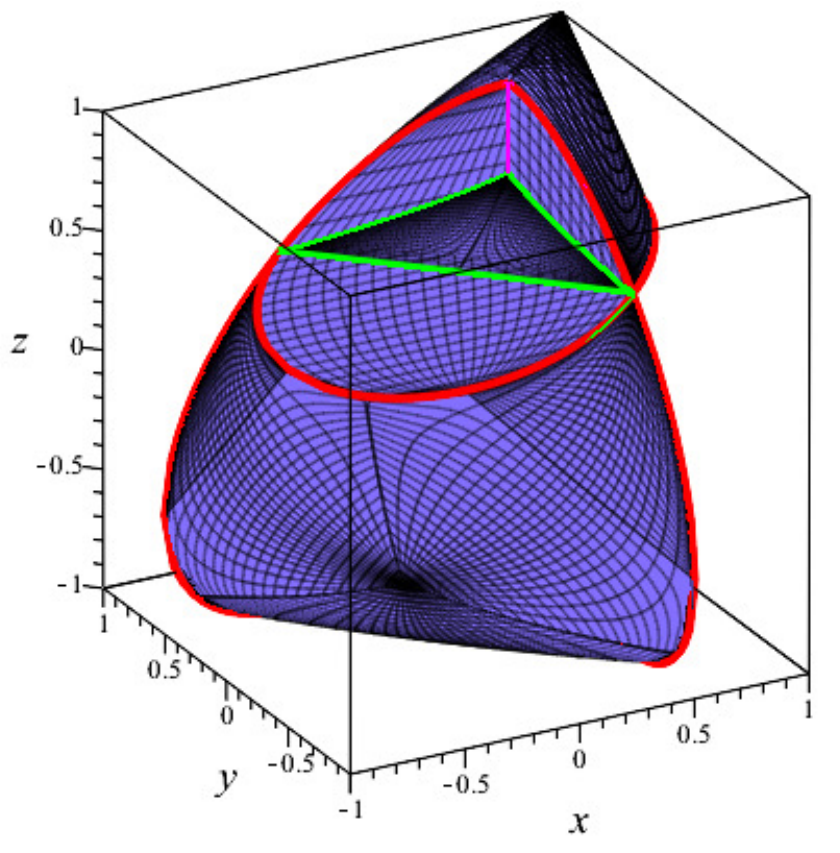}} \\ a)  \\
\end{minipage}
\quad
\begin{minipage}[h]{0.46\textwidth}
\center{\includegraphics[width=0.88\textwidth]{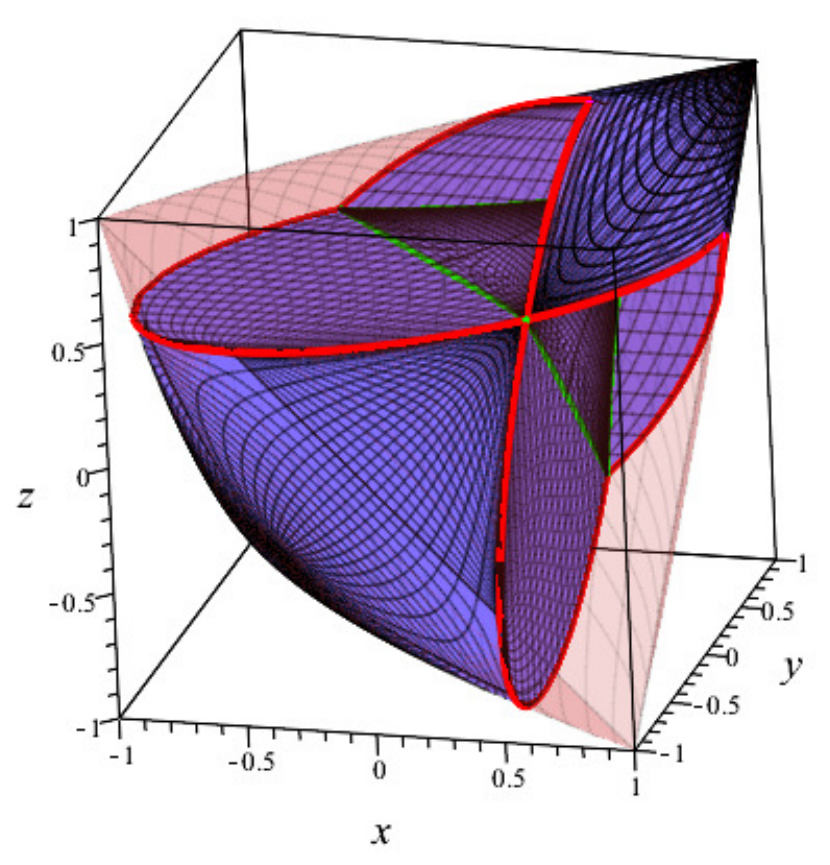}}   \\ b) \\
\end{minipage}
\begin{minipage}[h]{0.41\textwidth}
\center{\includegraphics[width=0.97\textwidth]{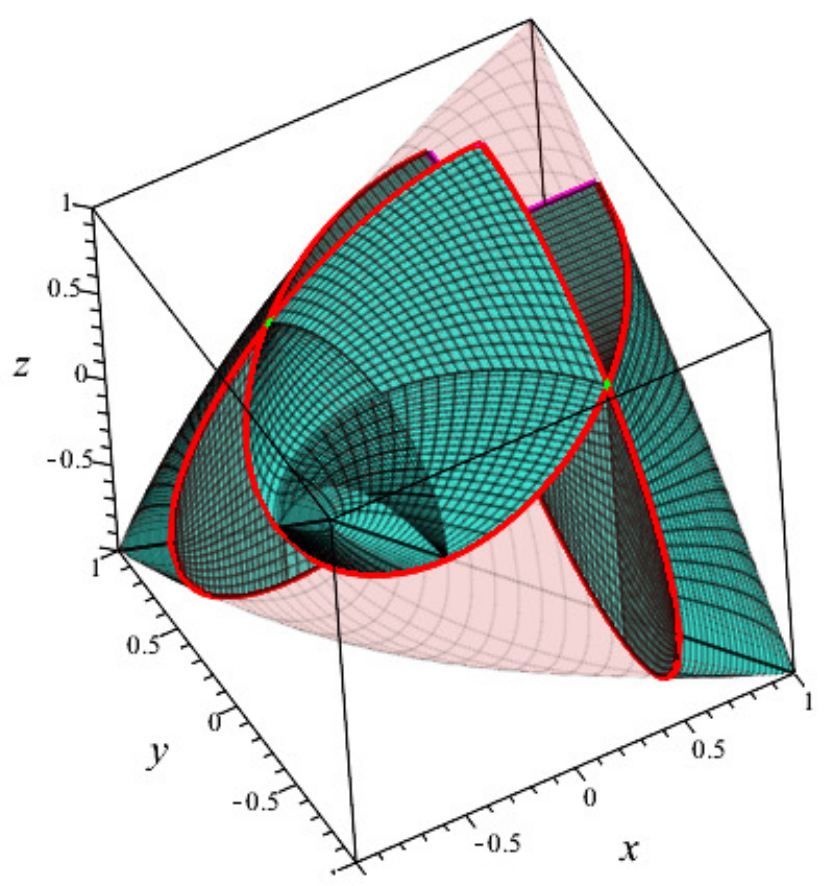}} \\ c)  \\
\end{minipage}
\quad
\begin{minipage}[h]{0.46\textwidth}
\center{\includegraphics[width=0.87\textwidth]{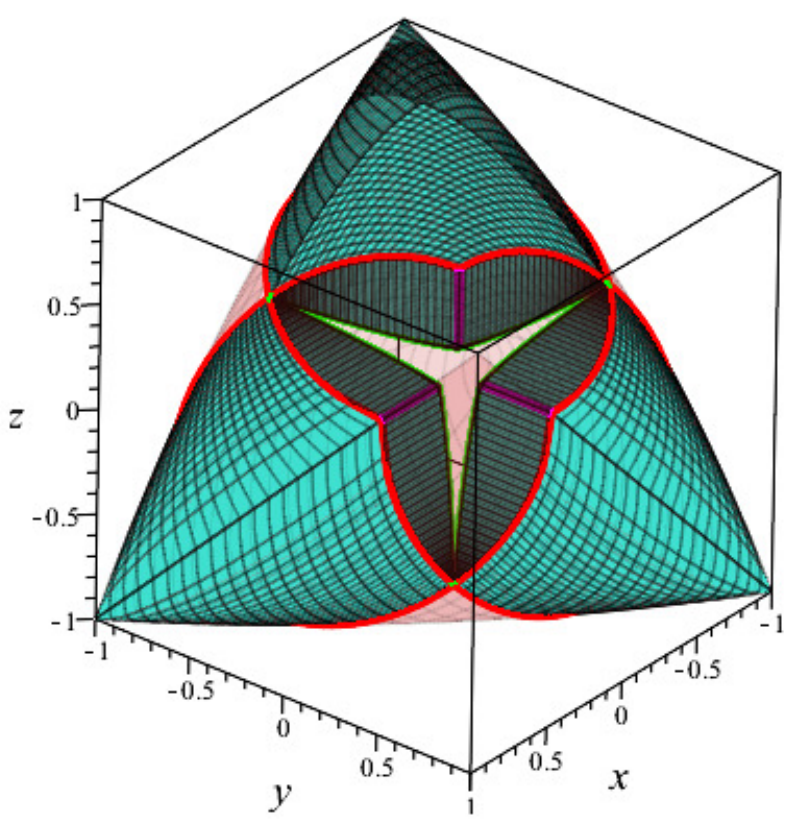}}   \\ d) \\
\end{minipage}
\caption{
The closure of the set $\Sigma(\triangle ABC)$ (in purple) and the closure of its complement $CS$  (in turquoise) in $\mathbb{P}$ (in orange)
for the case, when the base $ABC$ is a regular triangle,
for different points of view (angles):
a) the closure of $\Sigma(\triangle ABC)$, b) $\mathbb{P}$ and the closure of $\Sigma(\triangle ABC)$, c) and d) $\mathbb{P}$ and
the closure of $CS$. Red, green, and magenta curves show non-smoothness points of the boundary of $\Sigma(\triangle ABC)$
as well as of the boundary of the closure of $CS$.}
\label{firstimage}
\end{figure}

In our study, one subset  of $\mathbb{E}^3$ plays an important role,
namely, the right circular cylinder $\operatorname{Cyl}$ over the circle circumscribed around the base of $ABC$
(it suffices to consider only the part of this cylinder lying in $\Pi^{+}$).

The paper is organized as follows.
In Section \ref{sec.dist.coord}, we consider a formalization of Problem \ref{prob.1} in terms of distance coordinates.
Some applications of such formalization are obtained.

Another formalization of Problem \ref{prob.1} in terms of Cartesian coordinated is considered in
Section \ref{sec.cart.coord}. For some reasons, we prefer this formalization and it is used in all the following sections.
Note that in this case, $\Sigma(\triangle ABC)$ is the image of a special smooth map $F: \Pi^{+} \bigcup \Pi^{0}\setminus \{A,B,C\}\rightarrow \mathbb{R}^3$.

In Section \ref{sec.degenerate}, all degenerate points of the above function $F$ are found. They are exactly points of the  set
$\left(\operatorname{Cyl}\bigcap \Pi^{+}\right) \bigcup \Pi^{0}\setminus \{A,B,C\}$. This observation implies
Theorem~\ref{theo.nondeg}, which contains serious restriction on the boundary of the set $\Sigma(\triangle ABC)$.

Section \ref{sec.image.plane} is devoted to the study of the image of $\Pi^{0}\setminus \{A,B,C\}$ under the map $F$.
Note that $F(\Pi^{0}\setminus \{A,B,C\}) \subset \mathbb{BP}$, the boundary of $\mathbb{P}$ from Theorem \ref{theo.1}.

We note that the formulation of Problem \ref{prob.1} is preserved if we supplement the set of tetrahedra $\Omega(\triangle ABC)$
with degenerate tetrahedra with vertices $D$ in the plane $ABC$, but under the condition $D\not\in \{A,B,C \}$.
How to deal with cases when $D$ approaches one of the base vertices will be discussed in Section \ref{sec.limit.point}.
In particular, Proposition \ref{pr.closure.space.1} gives useful properties of the
closure of $\Sigma(\triangle ABC)$.

In Section \ref{sec.image.cyl}, some properties of the image of $\operatorname{Cyl}\bigcap \Pi^{+}$ under the map $F$ are studied.
It is proved, that such image is a smooth surface of positive Gaussian curvature, if we exclude three ``edges'' consisting of points of non-smoothness.

In Section \ref{sec.cyl.special}, we discuss the images of some special subsets of the cylinder $\operatorname{Cyl}$ and of $\Pi^{+}$.

A detailed answer to Problem \ref{prob.1} for an arbitrary base is given in the final section (Section \ref{sec.main}) of the paper.
In particular, this section contains
Theorem \ref{final.boundary}, which gives  a complete description of the boundary of the closure of $\Sigma(\triangle ABC)$
for any given base $ABC$.

In order to present the complexity of the original problem, we show in Fig. \ref{firstimage}  the closure of the set $\Sigma(\triangle ABC)$ (or its boundary)
for the case when the base $ABC$ is a regular triangle.

\smallskip

For the convenience of readers,
here we provide {\it the list of some important notations and definitions used in the paper}.

\begin{itemize}

\item
The set $\Omega (\triangle ABC)$ is introduced on Page \pageref{prob.1}.

\item
The set $\Sigma(\triangle ABC)$ is defined by \eqref{eq.sigma.1} on Page \pageref{eq.sigma.1}.

\item
The set $\mathbb{P}$ (the ``pillow'') is introduced on Page~\pageref{theo.1}.

\item
The right cylinder $\operatorname{Cyl}$ is introduced on Page \pageref{theo.1}.

\item
The set $\GT$ is defined
by \eqref{eq.GT} on Page \pageref{eq.GT}.

\item
The map $F:\GT \to \mathbb{R}^3$ is defined by \eqref{eq.fmain} on Page \pageref{eq.fmain}.

\item
The gradient of the function $F=(F_1,F_2,F_3)$ in the Cartesian coordinates
is defined by \eqref{eq.mapF.2} on Page \pageref{eq.mapF.2}.

\item
The boundary $\mathbb{BP}$ of the set $\mathbb{P}$ (the ``pillowcase'')
is defined by \eqref{eq.pillowc} on Page \pageref{eq.pillowc}.

\item
The definition of a {\it toroid} $\mathbb{T}_{KL}^{\,\alpha}$ is given on Page \pageref{eq.toroid.s.1}.

\item
The set $\DF$ is  introduced on Page \pageref{sec.degenerate}.

\item
The subsets
${\mathbb G_0}$, ${\mathbb G_1}$, ${\mathbb G_2}$, ${\mathbb G_3}$, ${\mathbb G_4}$,
${\mathbb G_5}$ and ${\mathbb G_6}$ of the plane are introduced on Page \pageref{break_plane}, see Fig.~\ref{break_plane}.

\item
The decomposition
$\Pi^0  = \mathbb{B}_{-}\bigcup S \bigcup \mathbb{B}_{+}$
is defined by \eqref{eq.decom.plane1} on Page \pageref{eq.decom.plane1}.

\item
The points $\widetilde{A}$, $\widetilde{B}$, and $\widetilde{C}$ of the ``pillowcase'' $\mathbb{BP}$
are defined by \eqref{eq.spec.points.1} on Page \pageref{eq.spec.points.1}.

\item
The definitions of the sets $\operatorname{Lim}(A)$, $\operatorname{Lim}(B)$, and $\operatorname{Lim}(C)$ are given on Page \pageref{le.limitpoint.1}.

\item
The closure $\CFT$ of $F(\GT)$  is introduced on Page \pageref{pr.closure.space.1}.

\item
The definition of the surface $\operatorname{FC}$
is given on Page \pageref{prob.inject.1}.

\item
The set $M_{\,\operatorname{reg}}$ of smooth points of the surface $F(\operatorname{Cyl}\bigcap \Pi^+)$ is introduced on Page \pageref{pro.nondegen.1}.

\item
The definition of a {\it special toroid} is given on Page \pageref{subs.8.1}.

\item
The definition of a {\it special region} of the cylinder $\operatorname{Cyl}$ is given on Page \pageref{def.spec.reg}.

\item
The set $\operatorname{BFT}$  is introduced on Page \pageref{final.boundary}.

\item
The set  $\operatorname{SRT}$ is defined by \eqref{eq.main} on Page \pageref{eq.main}.

\end{itemize}

\smallskip

The authors are grateful to Endre Makai, Jr., Michael Rieck, and Sergey Sadov for useful
discussions and valuable advices.

\section{Using distance coordinates}\label{sec.dist.coord}

Let us recall that the base $\triangle ABC$ is assumed fixed.
In particular, its side lengths $d(A,B)=\overline{x}$, $d(B,C)=\overline{z}$, $d(A,C)=\overline{y}$ are fixed.
Then we can parameterize the set of tetrahedra $ABCD$ with this base by the triples
of positive numbers $(x,y,z)$, where $x=d(D,C)$, $y=d(D,B)$, $z=d(D,A)$, such that the Cayley--Menger determinant~$\mathbb{D}$~\eqref{eq.CM.1} is positive
(note that the $(1,1)$-minor of $\mathbb{D}$ is negative by our assumptions and we can use Proposition \ref{pr.CM.1}).

All such triples constitute some set $\DT$ in $\mathbb{R}^3$ with coordinates $x,y,z$.
These are so-called the distance coordinates.
\smallskip

The condition $\mathbb{D}>(\,\geq)\, 0$ is as follows:
\begin{eqnarray}\label{eq.elpar.1}\notag
\mathbb{D}/2=-\overline{x}^2u^2-\overline{y}^2v^2-\overline{z}^2w^2
+(\overline{x}^2+\overline{y}^2-\overline{z}^2)uv
+(\overline{x}^2-\overline{y}^2+\overline{z}^2)uw\\
+(\overline{y}^2+\overline{z}^2-\overline{x}^2)vw
+(\overline{x}^2\overline{y}^2+\overline{x}^2\overline{z}^2-\overline{x}^4)u
+(\overline{x}^2\overline{y}^2+\overline{y}^2\overline{z}^2-\overline{y}^4)v\\
+(\overline{x}^2\overline{z}^2+\overline{y}^2\overline{z}^2-\overline{z}^4)w
-\overline{x}^2\overline{y}^2\overline{z}^2>(\,\geq)\, 0\,,\notag
\end{eqnarray}
where $u:=x^2$, $v:=y^2$, $w=z^2$.

Let us consider the matrix $\mathbb{\overline{A}}$ associated to the quadric $\mathbb{D}=\mathbb{D}(u,v,w)$:
$$
\mathbb{\overline{A}}=\left(
  \begin{array}{cccc}
    -2\overline{x}^2 & \overline{x}^2+\overline{y}^2-\overline{z}^2 & \overline{x}^2+\overline{z}^2-\overline{y}^2 &  (\overline{y}^2+\overline{z}^2-\overline{x}^2)\overline{x}^2 \\
    \overline{x}^2+\overline{y}^2-\overline{z}^2 & -2\overline{y}^2 & \overline{y}^2+\overline{z}^2-\overline{x}^2 & (\overline{x}^2+\overline{z}^2-\overline{y}^2)\overline{y}^2 \\
    \overline{x}^2+\overline{z}^2-\overline{y}^2 &  \overline{y}^2+\overline{z}^2-\overline{x}^2 &  -2\overline{z}^2 &  (\overline{x}^2+\overline{y}^2-\overline{z}^2)\overline{z}^2 \\
     (\overline{y}^2+\overline{z}^2-\overline{x}^2)\overline{x}^2 &  (\overline{x}^2+\overline{z}^2-\overline{y}^2)\overline{y}^2 &  (\overline{x}^2+\overline{y}^2-\overline{z}^2)\overline{z}^2 & -2\overline{x}^2\overline{y}^2\overline{z}^2 \\
  \end{array}
\right).
$$
By direct computations we get that the determinant of $\mathbb{\overline{A}}$ is negative, indeed
$$
\det(\mathbb{\overline{A}})=-(\overline{x}+\overline{y}+\overline{z})^3(\overline{x}+\overline{y}-\overline{z})^3
(\overline{x}+\overline{z}-\overline{y})^3(\overline{y}+\overline{z}-\overline{x})^3<0.
$$
It is easy to check that $(4,4)$-minor of $\mathbb{\overline{A}}$ (i.e. the determinant of the corresponding
quadratic form) is zero.
Therefore, the equality $\mathbb{D}=0$ defines some elliptic paraboloid, whereas
the inequality $\mathbb{D}\geq 0$ describes the convex hull of this elliptic paraboloid.

If, say, $w=0$, then
\begin{eqnarray*}\label{eq.elpar.2}\notag
\mathbb{D}/2=-\overline{x}^2u^2-\overline{y}^2v^2
+(\overline{x}^2+\overline{y}^2-\overline{z}^2)uv\\
+(\overline{x}^2\overline{y}^2+\overline{x}^2\overline{z}^2-\overline{x}^4)u
+(\overline{x}^2\overline{y}^2+\overline{y}^2\overline{z}^2-\overline{y}^4)v
-\overline{x}^2\overline{y}^2\overline{z}^2>(\,\geq)\, 0\notag
\end{eqnarray*}

It is easy to check that the intersection of  elliptic paraboloid \eqref{eq.elpar.1} with the plane $w=0$ consists of one point, namely,
the point $(u=\overline{y}^2,v=\overline{x}^2,w=0)$.
Indeed, the inequality~\eqref{eq.elpar.1} can be rewritten as follows:

$$
\overline{x}^2\bigl(u-\overline{y}^2\bigr)^2- \bigl(\overline{x}^2+\overline{y}^2-\overline{z}^2\bigr)\bigl(u-\overline{y}^2\bigr)\bigl(v-\overline{x}^2\bigr) +\overline{y}^2\bigl(v-\overline{x}^2\bigr)^2
<(\,\leq)\, 0\,.
$$
On the other hand,
$$
\bigl(\overline{x}^2 + \overline{y}^2 - \overline{z}^2\bigr)^2 - 4\overline{x}^2\overline{y}^2=-\bigl(\overline{x} + \overline{y} + \overline{z}\bigr)
\bigl(\overline{y} - \overline{z} -\overline{x}\bigr) \bigl(\overline{x} + \overline{z}- \overline{y}) \bigl(\overline{x}+\overline{y} -\overline{z}\bigr) <0,
$$
since we have the triangle inequality for the triangle $ABC$. This proves that the intersection of
elliptic paraboloid \eqref{eq.elpar.1} with the plane $w=0$ consists of the point $\overline{C}:=(\overline{y}^2,\overline{x}^2,0)$.

Similar arguments prove that the intersections of elliptic paraboloid \eqref{eq.elpar.1} with the planes $u=0$ and $v=0$
consist of the points $\overline{A}:=(0,\overline{z}^2,\overline{y}^2)$ and $\overline{B}:=(\overline{z}^2,0,\overline{x}^2)$ respectively.
We note that the points $\overline{A}$, $\overline{B}$, and $\overline{C}$ represent respectively the points $A$, $B$, and $C$ in this special parametrization.
\smallskip

Let us define the map
$\overline{F}=(\overline{F}_1,\overline{F}_2,\overline{F}_3):\DT \rightarrow \mathbb{R}^3$
as follows:
\begin{eqnarray}\label{eq.mapF.1}\notag
\overline{F}_1(x,y,z)&=&\frac{x^2+y^2-\overline{z}^2}{2xy}=\cos \angle BDC= \cos \overline{\alpha},\\
\overline{F}_2(x,y,z)&=&\frac{x^2+z^2-\overline{y}^2}{2xz}=\cos \angle CDA = \cos \overline{\beta}, \\ \notag
\overline{F}_3(x,y,z)&=&\frac{y^2+z^2-\overline{x}^2}{2yz}=\cos \angle ADB =\cos \overline{\gamma}.
\end{eqnarray}

We are going to prove the following simple result.

\begin{lemma}\label{le.infin.1}
Let us consider tetrahedra $ABCD$ with a fixed base $ABC$, where
$d(A,B)=\overline{x}$, $d(B,C)=\overline{z}$, $d(A,C)=\overline{y}$, $d(D,A)=z$, $d(D,B)=y$, $d(D,C)=x$.
Then $\overline{F}(x,y,z) \to (1,1,1)$ if and only if $\min \{x,y,z\} \to \infty$ or, equivalently,  $\max \{x,y,z\} \to \infty$.
\end{lemma}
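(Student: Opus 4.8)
The plan is to base everything on the elementary identity
$$\overline{F}_1(x,y,z)-1=\frac{x^2+y^2-\overline{z}^2-2xy}{2xy}=\frac{(x-y)^2-\overline{z}^2}{2xy}$$
and its cyclic companions $\overline{F}_2-1=\bigl((x-z)^2-\overline{y}^2\bigr)/(2xz)$ and $\overline{F}_3-1=\bigl((y-z)^2-\overline{x}^2\bigr)/(2yz)$. The geometric ingredient I would combine with it is that, since $x=d(D,C)$, $y=d(D,B)$, $z=d(D,A)$ are the distances from the single point $D$ to the fixed vertices, the triangle inequality applied in $DBC$, $DAC$, $DAB$ yields
$$|x-y|\le\overline{z},\qquad |x-z|\le\overline{y},\qquad |y-z|\le\overline{x}.$$
Thus the three distance coordinates can never differ from one another by more than the fixed side lengths of the base.

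The implication $\min\{x,y,z\}\to\infty \Rightarrow \overline{F}\to(1,1,1)$ then follows at once: by the displayed bounds each numerator $(x-y)^2-\overline{z}^2$, $(x-z)^2-\overline{y}^2$, $(y-z)^2-\overline{x}^2$ stays in a fixed bounded interval, whereas each denominator $2xy$, $2xz$, $2yz$ is at least $2(\min\{x,y,z\})^2$ and hence tends to $+\infty$. Every coordinate $\overline{F}_i-1$ therefore tends to $0$.

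For the converse I would argue by contraposition and compactness. If $\min\{x,y,z\}$ does not tend to $\infty$, choose a subsequence on which it remains bounded; since $\max\{x,y,z\}-\min\{x,y,z\}\le\max\{\overline{x},\overline{y},\overline{z}\}$ by the same inequalities, the whole triple stays in a compact set and, along a further subsequence, converges to some $(x_0,y_0,z_0)$ lying in the closure of $\DT$. If $x_0,y_0,z_0$ are all positive, then $\overline{F}$ is continuous there and $\overline{F}(x_0,y_0,z_0)=(1,1,1)$ would force $|x_0-y_0|=\overline{z}$, $|x_0-z_0|=\overline{y}$, $|y_0-z_0|=\overline{x}$ simultaneously; but a (possibly degenerate) point $D$ realizing these distances would then be collinear with each pair among $A,B,C$, making $A,B,C,D$ collinear and contradicting the non-degeneracy of the base. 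If instead one coordinate vanishes, say $x_0=0$, then $D\to C$, so $y_0=\overline{z}$, $z_0=\overline{y}$, and the $x$-free expression $\overline{F}_3$ converges to $(\overline{z}^2+\overline{y}^2-\overline{x}^2)/(2\overline{z}\,\overline{y})=\cos\angle ACB<1$; the cyclic variants cover $y_0=0$ and $z_0=0$, and at most one coordinate can vanish because $A,B,C$ are distinct. In every case $\overline{F}$ fails to converge to $(1,1,1)$, contradicting the assumption.

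The main obstacle is precisely this converse direction: one must exclude bounded configurations whose three apex angles nevertheless all shrink to zero. The essential point is geometric — over a fixed non-degenerate base the three face angles at $D$ cannot all tend to $0$ unless $D$ recedes to infinity — and the subtlety is in treating the degenerate limits, where $D$ collapses onto a vertex and $\overline{F}$ loses continuity; the compactness argument above, split according to whether the limiting triple has a zero coordinate, is the cleanest route I see to handle these cases uniformly.
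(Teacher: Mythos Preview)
Your proof is correct and follows essentially the same strategy as the paper: both use the triangle inequality to show that $\min\{x,y,z\}\to\infty$ is equivalent to $\max\{x,y,z\}\to\infty$, and both handle the converse by a compactness argument leading to a geometric contradiction with the non-degeneracy of the base. Your algebraic identity $\overline{F}_i-1=\bigl((x-y)^2-\overline{z}^2\bigr)/(2xy)$ makes the forward direction particularly clean, and your explicit case split (all limiting coordinates positive versus one vanishing) is more careful than the paper's version, which passes directly to a limit point $D'$ and asserts $\angle AD'B=0$ without separately discussing the possibility $D'\in\{A,B,C\}$; your treatment fills that small gap.
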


\begin{proof} We note that $M \to \infty$ is equivalent to $m \to \infty$ due to
the inequality $M\geq m \geq M - M_b$ (it easily follows from the triangle inequality),
where $m=\min \{x,y,z\}$, $M_b=\max \{\overline{x}, \overline{y},\overline{z}\}$, and $M=\max \{x,y,z\}$.

If $m=\min \{x,y,z\} \to \infty$,
then $x \,(y,\,z) \geq m \geq M - M_b$.
Let us consider only tetrahedra with $M\geq  M_b$. Then we have
\begin{eqnarray*}
\cos \angle ADB =\frac{y^2+z^2-\overline{x}^2}{2yz}\geq \frac{(M - M_b)^2+(M - M_b)^2-M_b^2}{2yz}\\
\geq \frac{2(M - M_b)^2-M_b^2}{2M^2}=1-\frac{4MM_b-M_b^2}{2M^2} \to 1
\end{eqnarray*}
when $M \to \infty$ (which is equivalent to $m \to \infty$).
Similar arguments prove that $\cos \angle BDC \to 1$ and  $\cos \angle CDA \to 1$ when $m \to \infty$.

Conversely, if we have a sequence of points $\{D_n\}$, such
that  $\cos \angle AD_nB \to 1$, $\cos \angle BD_nC \to 1$, and  $\cos \angle CD_nA \to 1$ when $n \to \infty$,
then
$$
M_n=\max\{d(D_n,A),d(D_n,B),d(D_n,C)\} \to \infty \quad \mbox{  as }n \to \infty.
$$
Otherwise, passing to a subsequence (if necessary) we
may suppose that all $M_n$ are less than some constant and $D_n \to D'\in \mathbb{R}^3$ as $n \to \infty$.
On the other hand, this implies $\angle AD'B=\angle BD'C= \angle CD'A=0$, that is impossible since the triangle $ABC$ is supposed to be non-degenerate.
This proves the lemma.
\end{proof}
\medskip

Although the use of distance coordinates leads to some useful results related to Problem \ref{prob.1},
we will use the more familiar and somewhat more convenient Cartesian coordinates for our study.

\section{Cartesian coordinates and some general observations}\label{sec.cart.coord}

Let us consider another possibility to parameterize the set of tetrahedra $ABCD$ with the base $ABC$ by the triples
$(p,q,r)\in \mathbb{R}^3$, that are suitable Cartesian coordinates of the point $D$.
In this case, the map $F=(F_1,F_2,F_3)=(\cos \overline{\alpha},\cos \overline{\beta},  \cos \overline{\gamma})$ can be consider as map from $\mathbb{R}^3$ to itself.

\smallskip

Let us consider a Cartesian system  in $\mathbb{R}^3$ such that
$A=(a_1,a_2,0)$, $B=(b_1,b_2,0)$, $C=(c_1,c_2,0)$, and $D=(p,q,r)$.
It is clear that the points $D=(p,q,r)$ and $D=(p,q,-r)$ give us two isometric
tetrahedra with a common base $ABC$. Therefore, we may restrict our attention to the case $r\geq 0$.

To relate this parametrization with the previous one, we note that
\begin{eqnarray}\label{eq.twocoord.1}\notag
\overline{x}^2=(a_1-b_1)^2+(a_2-b_2)^2, \qquad\qquad\,\, \overline{y}^2=(a_1-c_1)^2+(a_2-c_2)^2, \\
\overline{z}^2=(c_1-b_1)^2+(c_2-b_2)^2, \quad\, u=x^2=(p-c_1)^2+(q-c_2)^2+r^2, \\ \notag
v=y^2=(p-b_1)^2+(q-b_2)^2+r^2, \quad w=z^2=(p-a_1)^2+(q-a_2)^2+r^2.
\end{eqnarray}
\smallskip

In what follows, we will use the following notations:
\begin{equation}\label{eq.Pi+Pi0}
\Pi^+=\{(p,q,r) \in \mathbb{R}^3\,|\, r>0\}, \quad \Pi^0=\{(p,q,0) \in \mathbb{R}^3\},
\end{equation}
\begin{equation}\label{eq.GT}
\GT=\Pi^+ \bigcup \bigl(\Pi^0 \setminus \{A,B,C\} \bigr).
\end{equation}

The latter set is the set of all tetrahedra with the base $ABC$
(we consider only the case when $D \not\in \{A,B,C\}$). We can also formally define the map $F:\GT \to \mathbb{R}^3$, using \eqref{eq.twocoord.1} and \eqref{eq.mapF.1}.
Note, that
\begin{equation}\label{eq.fmain}
F(p,q,r)=\bigl(\cos \overline{\alpha}=\cos \angle BDC, \cos \overline{\beta}=\cos \angle CDA, \cos \overline{\gamma}= \cos \angle ADB \bigr),
\end{equation}
where $D=(p,q,r)\in  \GT=\Pi^+ \bigcup \bigl(\Pi^0 \setminus \{A,B,C\} \bigr)\subset \mathbb{R}^3\setminus \{A,B,C\}$.

Taking into account Lemma \ref{le.infin.1}, we can add to the set $\GT$ the {\it infinity point} $\infty$.
Then $\Pi^+ \bigcup \Pi^0 \bigcup \{\infty\}$ is homeomorphic to a closed $3$-dimensional ball in the topology of the one-point compactification,
where $\Pi^0 \bigcup \{\infty\}$ is the boundary of this ball (a sphere).

\smallskip

We will use the symbol
$\overrightarrow{\nabla} G$ for the gradient of the function $G$ in the Cartesian coordinates, i.e.,
$\overrightarrow{\nabla} G=\left(\frac{\partial{G}}{\partial p}, \frac{\partial{G}}{\partial q}, \frac{\partial{G}}{\partial r}\right)$.
It is easy to see (taking into account \eqref{eq.mapF.1}  and~\eqref{eq.twocoord.1}) that
\begin{eqnarray}\label{eq.mapF.2}\notag
\overrightarrow{\nabla} F_1 = \overrightarrow{\nabla} \cos \overline{\alpha}=
\frac{x^2-y^2-\overline{z}^2}{2y^3x}\cdot\overrightarrow{DB}
+\frac{y^2-x^2-\overline{z}^2}{2x^3y}\cdot\overrightarrow{DC},\\
\overrightarrow{\nabla} F_2 = \overrightarrow{\nabla} \cos \overline{\beta}=
\frac{x^2-z^2-\overline{y}^2}{2z^3x}\cdot\overrightarrow{DA}
+\frac{z^2-x^2-\overline{y}^2}{2x^3z}\cdot\overrightarrow{DC},\\ \notag
\overrightarrow{\nabla} F_3 = \overrightarrow{\nabla} \cos \overline{\gamma}=
\frac{y^2-z^2-\overline{x}^2}{2z^3y}\cdot\overrightarrow{DA}
+\frac{z^2-y^2-\overline{x}^2}{2y^3z}\cdot\overrightarrow{DB}.
\end{eqnarray}

These formulas will be used in what follows.

\begin{remark}\label{re.nond.1}
The triangle $ABC$ is assumed to be non-degenerate,
therefore the vectors $\overrightarrow{DA}$, $\overrightarrow{DB}$, and $\overrightarrow{DC}$ are linearly independent
for all points $D=(p,q,r)$ with $r\neq 0$. On the other hand, they are linearly dependent in the case when $r=0$, i.e. $D$ is in the plane $ABC$ and the tetrahedron $ABCD$
is degenerate.
\end{remark}

Now we are going to describe some properties of the image $F(\GT)$ under the map $F:\GT \to \mathbb{R}^3$, see \eqref{eq.fmain}.
The main objects of our study are the closure $\CFT$ of $F(\GT)$ and the boundary $\BFT$ of $\CFT$ in the standard topology of $\mathbb{R}^3$.
\smallskip

Along with the set $\mathbb{P}$ from Theorem~\ref{theo.1} (the ``pillow''), it is useful for us to consider its boundary
\begin{equation}\label{eq.pillowc}
\mathbb{BP}=\left\{(x,y,z)\in [-1,1]^3\,|\,1+2xyz-x^2-y^2-z^2 = 0 \right\}
\end{equation}
(the ``pillowcase'').
\smallskip

By \eqref{eq.vol.1}, we get that $F(p,q,r) \in \mathbb{P}$ for every $(p,q,r) \in \GT$.

\begin{remark} It should be noted that for $(x,y,z) \in F(\GT)$, the preimage set $F^{-1}\bigl((x,y,z)\bigr)$ of the point  $(x,y,z)$
has at least $1$ and at most $4$ elements. This result is well known and it was obtained within the framework of solving the so-called
``the perspective $3$-point problem'',
see e.g., \cite{WMMS} and \cite{HLON}. In these publications one can find many examples where points have one, two, three or four preimages.
The proof that more than four
preimages are impossible comes down to the fact that each of the possible preimages
corresponds to some root of a special fourth-degree polynomial.
\end{remark}

If we fix one of the variables in \eqref{eq.pillowc},
then we obtain the equation of some ellipse, which lies in the plane parallel to the coordinate plane of two other variables.

It is possible to find a simple parametrization of  $\mathbb{BP}$.

\begin{lemma}\label{le.param.pilc}
The surface $\mathbb{BP}$ is the image of the map
\begin{equation}\label{eq.pilc.param}
(\varphi,\psi) \mapsto \bigl(\cos(\varphi - \psi), \cos (\psi), \cos (\varphi)\bigr), \quad \varphi,\psi \in \mathbb{R}/2\pi\mathbb{Z}.
\end{equation}
\end{lemma}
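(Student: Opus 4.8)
The plan is to establish the two inclusions separately: that the image of the map \eqref{eq.pilc.param} is contained in the pillowcase $\mathbb{BP}$, and conversely that every point of $\mathbb{BP}$ is realized by the map.

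First I would prove the inclusion of the image in $\mathbb{BP}$. Put $x=\cos(\varphi-\psi)$, $y=\cos\psi$, $z=\cos\varphi$, which automatically lie in $[-1,1]$, and introduce the auxiliary angles $\overline{\alpha}=\varphi-\psi$, $\overline{\beta}=\psi$, $\overline{\gamma}=\varphi$, so that $\overline{\gamma}=\overline{\alpha}+\overline{\beta}$. Comparing the two expressions \eqref{eq.vol.1} and \eqref{eq.vol.2} for $V^2$ gives the purely algebraic identity
$$
1+2\cos\overline{\alpha}\cos\overline{\beta}\cos\overline{\gamma}-\cos^2\overline{\alpha}-\cos^2\overline{\beta}-\cos^2\overline{\gamma}
=4\sin\tfrac{\overline{\alpha}+\overline{\beta}+\overline{\gamma}}{2}\sin\tfrac{\overline{\alpha}+\overline{\beta}-\overline{\gamma}}{2}\sin\tfrac{\overline{\alpha}+\overline{\gamma}-\overline{\beta}}{2}\sin\tfrac{\overline{\beta}+\overline{\gamma}-\overline{\alpha}}{2},
$$
valid for all real $\overline{\alpha},\overline{\beta},\overline{\gamma}$ (it is independent of any geometric interpretation). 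Since $\overline{\alpha}+\overline{\beta}-\overline{\gamma}=0$ in our case, the right-hand side vanishes, hence $1+2xyz-x^2-y^2-z^2=0$ and $(x,y,z)\in\mathbb{BP}$ by \eqref{eq.pillowc}.

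For the reverse inclusion I would, given $(x,y,z)\in\mathbb{BP}$, recover suitable $\varphi,\psi$. Reading the defining equation as a quadratic in $x$,
$$
x^2-2yz\,x+(y^2+z^2-1)=0,
$$
and using $y^2z^2-y^2-z^2+1=(1-y^2)(1-z^2)\ge 0$, I obtain $x=yz\pm\sqrt{(1-y^2)(1-z^2)}$. Now set $\psi=\arccos y\in[0,\pi]$ and $\varphi=\pm\arccos z$, so that $\cos\psi=y$, $\cos\varphi=z$, while $\sin\psi=\sqrt{1-y^2}\ge 0$ and $\sin\varphi=\pm\sqrt{1-z^2}$. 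Then
$$
\cos(\varphi-\psi)=\cos\varphi\cos\psi+\sin\varphi\sin\psi=yz\pm\sqrt{(1-y^2)(1-z^2)},
$$
and choosing the sign of $\varphi$ to match the branch of $x$ yields $\cos(\varphi-\psi)=x$. Thus $(x,y,z)$ is the image of $(\varphi,\psi)$, which gives surjectivity.

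The computations are elementary and rest entirely on the volume identities already recorded in the excerpt. The only step needing care is the sign bookkeeping in the surjectivity argument: one must check that the symmetry $\varphi\mapsto-\varphi$, which preserves $\cos\varphi=z$ but flips the sign of $\sin\varphi$, realizes precisely the two branches $x=yz\pm\sqrt{(1-y^2)(1-z^2)}$ forced by the pillowcase equation. This is the main, though minor, obstacle; everything else is direct substitution.
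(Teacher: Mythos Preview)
Your proof is correct, and the two inclusions are handled cleanly. The inclusion of the image in $\mathbb{BP}$ via the factorization identity coming from \eqref{eq.vol.1}--\eqref{eq.vol.2} is a nice touch, though of course one can also just expand $\cos(\varphi-\psi)$ and substitute directly.

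The approach, however, differs genuinely from the paper's. For surjectivity the paper does not construct an explicit preimage; instead it argues topologically: it checks that the map is smooth and generically non-degenerate, classifies when two pairs $(\varphi_1,\psi_1)$ and $(\varphi_2,\psi_2)$ have the same image, and observes that after the identification $(\varphi,\psi)\sim(-\varphi,-\psi)$ the torus $(\mathbb{R}/2\pi\mathbb{Z})^2$ becomes a sphere on which the map is injective. Since the image is an injective continuous image of a $2$-sphere inside the closed surface $\mathbb{BP}$, it must be all of $\mathbb{BP}$. Your argument is more elementary and entirely constructive---you solve the quadratic in $x$ and recover $(\varphi,\psi)$ explicitly---which has the virtue of transparency and avoids any appeal to invariance-of-domain-type reasoning. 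The paper's route, on the other hand, yields as a byproduct the global structure of the parametrization (degeneracy locus, fiber description, the sphere quotient), which is useful later for understanding $\mathbb{BP}$ geometrically. Either argument suffices for the lemma as stated.
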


\begin{proof}
Direct computations show that the map \eqref{eq.pilc.param} is smooth and it is non-degenerate at all points excepting $\varphi,\psi \in \pi \mathbb{Z}$.
It is easy to verify that if some pairs $(\varphi_1,\psi_1)$ and $(\varphi_2,\psi_2)$ have one and the same image, then either
$\varphi_2-\varphi_1, \psi_2-\psi_1 \in 2\pi \mathbb{Z}$, or $\varphi_2+\varphi_1, \psi_2+\psi_1 \in 2\pi\mathbb{Z}$.
Moreover, it is easy to check by direct calculations that
$\bigl(\cos(\varphi - \psi), \cos (\psi), \cos (\varphi)\bigr) \in \mathbb{BP}$ for any $\varphi,\psi \in \mathbb{R}$.

If we identify every point $(\varphi,\psi)$ with $(-\varphi,-\psi)$, then the torus $(\mathbb{R}/2\pi\mathbb{Z})^2$ becomes a sphere.
Therefore, the image of map \eqref{eq.pilc.param} is an injective image of a sphere and is a subset of $\mathbb{BP}$.
Consequently, this image is exactly $\mathbb{BP}$.
\end{proof}

\begin{prop}\label{pr.prop.pc}
The surface $\mathbb{BP}$ has the following properties:

{\rm 1)}  $\mathbb{BP}$ is invariant under every permutation of the variables $x,y,z$ and under the map $(x,y,z)\mapsto (\pm x, \pm y, \pm z)$,
where there are even number of the sign ``$-$'' in the right side of the formula.

{\rm 2)} The  intersection of $\mathbb{BP}$ with the plane $z=1$ {\rm(}$z=-1${\rm)} is the closed interval with the endpoints $(-1,-1,1)$ and $(1,1,1)$
{\rm(}the closet interval with the endpoints $(-1,1,-1)$ and $(1,-1,-1)$, respectively{\rm)}.

{\rm 3)} The  surface $\mathbb{BP}$ contains all edges of the tetrahedron with the vertices
 $(1,1,1)$, $(1,-1,-1)$, $(-1,1,-1)$, and $(-1,-1,1)$.

{\rm 4)} If $c=\operatorname{const} \in (-1,1)$, then the  intersection of $\mathbb{BP}$ with the plane $z=c$ is an ellipse, which is invariant under the maps
 $(x,y)\mapsto (y,x)$ and $(x,y)\mapsto (-x,-y)$; in particular this ellipse is the unit circle for $c=0$.

{\rm 5)}  The surface $\mathbb{BP}$ is convex, it is smooth {\rm(}with positive Gaussian curvature{\rm)} at all points except the points
$(1,1,1)$, $(1,-1,-1)$, $(-1,1,-1)$, $(-1,-1,1)$ {\rm(}at all points except the edges of the tetrahedron with the vertices
 $(1,1,1)$, $(1,-1,-1)$, $(-1,1,-1)$, and $(-1,-1,1)${\rm)}.

\end{prop}

\begin{proof}
The first property is obvious.
The second property follows from the equalities $(x-y)^2=0$ {\rm(}$(x+y)^2=0${\rm)} for $z=1$ {\rm(}$z=-1$, respectively{\rm)}.
The third property is follows easily from the previous two.
It is clear that the equality $x^2-2cxy+y^2=1-c^2$  determines an ellipse with suitable symmetries, which proves the fourth property.
The last property can be proved easily by the standard analytical methods.
\end{proof}

\medskip

For given points $K,L \in \mathbb{R}^3$ and a given $\alpha \in [0,\pi]$, we consider {\it a toroid}
\begin{equation}\label{eq.toroid.s.1}
\mathbb{T}_{KL}^{\,\alpha}=\left\{M\in {\mathbb R}^3\,|\,\angle KML=\alpha \right\}
\end{equation}
(this concept was introduced in \cite{Rieck}, where it was successfully and effectively used).
It is clear that $\mathbb{T}_{KL}^{\,\alpha}$ is a surface of revolution with axis $KL$.

Note that $K,L \not\in \mathbb{T}_{KL}^{\,\alpha}$.
If $K=(k_1,k_2,k_3)$ and $L=(l_1,l_2,l_3)$ then the point $M=(p,q,r)$ is in $\mathbb{T}_{KL}^{\,\alpha}$ if and only if
and
\begin{equation}\label{eq.toroid.s.2}
\frac{(p-k_1)(p-l_1)+(q-k_2)(q-l_2)+(r-k_3)(r-l_3)}{\sqrt{(p-k_1)^2+(q-k_2)^2+(r-k_3)^2}\cdot \sqrt{(p-l_1)^2+(q-l_2)^2+(r-l_3)^2}}=\cos (\alpha).
\end{equation}
Therefore, $\mathbb{T}_{KL}^{\,\alpha}$ is a subset of a suitable algebraic surface of degree $4$. If, say, $K=(-t,0,0)$ and $L=(t,0,0)$ for some $t>0$,
then this surface is described by the equation
$$
\bigl(p^2+q^2+r^2-t^2 \bigr)^2= \cos^2 (\alpha) \cdot \Bigl(\bigl(p^2+q^2+r^2+t^2 \bigr)^2-4t^2p^2\Bigr).
$$

If $\Sigma$ is any plane in $\mathbb{R}^3$ such that $K,L \in \Sigma$,
then $T_{KL,\,\Sigma}^{\,\alpha}=\mathbb{T}_{KL}^{\,\alpha} \bigcap \Sigma$ consists of two congruent circular arcs.

It is clear that there is a tetrahedron $ABCD$ with a given base $ABC$ and with given angles
$(\angle BDC,\angle CDA,\angle ADB)=(\overline{\alpha},\overline{\beta},\overline{\gamma})$
if and only if
$$
\mathbb{T}_{BC}^{\,\overline{\alpha}} \bigcap \mathbb{T}_{AC}^{\,\overline{\beta}} \bigcap \mathbb{T}_{AB}^{\,\overline{\gamma}} \neq \emptyset.
$$
The study of toroids and their properties is very important for our main problem, see some discussion in \cite{Rieck}.

\section{Degenerate points of $F$}\label{sec.degenerate}

We are going to find the set $\DF$ of points $D \in \GT$, for which
the map $F$ is degenerate, i.e. the linear span of the vectors $\overrightarrow{\nabla} F_1$, $\overrightarrow{\nabla} F_2$, and $\overrightarrow{\nabla} F_3$
(see \eqref{eq.mapF.2}) is not $3$-dimensional.

\begin{prop}\label{pr.nondeg.main}
If a point $D\in \mathbb{R}^3\setminus \{A,B,C\}$ is degenerate for the map $F$, then either $D \in  \Pi^0$ or $D \in  \operatorname{Cyl}$,
where $\Pi^0$ is the plane $ABC$ and $\operatorname{Cyl}$ is the right cylinder
over the circle circumscribed around the triangle $ABC$.
\end{prop}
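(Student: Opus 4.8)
The plan is to turn the degeneracy condition into a single polynomial equation in the coordinates of $D$ and then recognize its zero set. First I would dispose of the trivial alternative: if $r=0$ then $D\in\Pi^0$ and there is nothing to prove, so assume $r\neq 0$. By Remark~\ref{re.nond.1} the vectors $\overrightarrow{DA},\overrightarrow{DB},\overrightarrow{DC}$ are then linearly independent and form a basis of $\mathbb{R}^3$. Reading off the coefficients in \eqref{eq.mapF.2}, the gradients $\overrightarrow{\nabla}F_1,\overrightarrow{\nabla}F_2,\overrightarrow{\nabla}F_3$ are the rows, in this basis, of
$$
M=\begin{pmatrix}
0 & \dfrac{x^2-y^2-\overline{z}^2}{2y^3x} & \dfrac{y^2-x^2-\overline{z}^2}{2x^3y}\\[2mm]
\dfrac{x^2-z^2-\overline{y}^2}{2z^3x} & 0 & \dfrac{z^2-x^2-\overline{y}^2}{2x^3z}\\[2mm]
\dfrac{y^2-z^2-\overline{x}^2}{2z^3y} & \dfrac{z^2-y^2-\overline{x}^2}{2y^3z} & 0
\end{pmatrix}.
$$
Hence $D$ is degenerate for $F$ precisely when $\det M=0$.

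The second step is to evaluate this determinant. Since $M$ has zero diagonal, only the two cyclic products of off-diagonal numerators survive, and $\det M=N/(8x^4y^4z^4)$ with
$$
N=(x^2-y^2-\overline{z}^2)(z^2-x^2-\overline{y}^2)(y^2-z^2-\overline{x}^2)+(y^2-x^2-\overline{z}^2)(x^2-z^2-\overline{y}^2)(z^2-y^2-\overline{x}^2).
$$
Introducing $\xi=x^2-y^2$, $\eta=y^2-z^2$, $\zeta=z^2-x^2$ (so that $\xi+\eta+\zeta=0$) and expanding, the cubic term and several others cancel in the difference, leaving
$$
N=-2\bigl(\overline{x}^2\,\xi\zeta+\overline{y}^2\,\xi\eta+\overline{z}^2\,\eta\zeta+\overline{x}^2\overline{y}^2\overline{z}^2\bigr).
$$
The crucial observation is that, by \eqref{eq.twocoord.1}, the differences $\xi,\eta,\zeta$ are independent of $r$ and affine-linear in $(p,q)$; consequently $N=N(p,q)$ is a single quadratic polynomial in the planar coordinates of the foot of $D$.

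Finally I would identify the conic $N=0$. Substituting the values of $(x,y,z)$ corresponding to $D=A$, $D=B$, $D=C$ shows, after cancellation, that $N$ vanishes at all three vertices. It remains to prove $N=0$ is a \emph{circle}, i.e. that the degree-two part of $N$ is a nonzero multiple of $p^2+q^2$. Writing the homogeneous linear parts of $\xi,\eta,\zeta$ as $\langle\mathbf u,\mathbf m\rangle,\langle\mathbf v,\mathbf m\rangle,\langle\mathbf w,\mathbf m\rangle$ with $\mathbf m=(p,q)$, where $\mathbf u,\mathbf v,\mathbf w$ are twice the edge vectors of $ABC$ (so $\mathbf u+\mathbf v+\mathbf w=0$ and $\overline{x}^2=|\mathbf v|^2/4$, etc.), the quadratic part of $N$ becomes, after eliminating $\mathbf u$, a multiple of $-\bigl|\langle\mathbf v,\mathbf m\rangle\,\mathbf w-\langle\mathbf w,\mathbf m\rangle\,\mathbf v\bigr|^2$. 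By the two-dimensional Lagrange identity this is $-(\mathbf v\wedge\mathbf w)^2(p^2+q^2)$, i.e. the degree-two part of $N$ is a positive multiple of $p^2+q^2$ proportional to the squared area of $ABC$, which is nonzero by non-degeneracy. Thus $\{N=0\}$ is a genuine circle through $A,B,C$, hence the circumscribed circle; since $N$ does not involve $r$, the degeneracy locus with $r\neq0$ is exactly $\operatorname{Cyl}$, and together with the case $r=0$ this yields the claim.

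I expect the main obstacle to be this last step — extracting the quadratic part and certifying that it is a round circle rather than an arbitrary conic through the three vertices — and the Lagrange identity is what makes the reduction to $p^2+q^2$ transparent.
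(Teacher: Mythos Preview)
Your argument is correct and follows the same overall route as the paper: dispose of $r=0$, compute the Jacobian in the basis $(\overrightarrow{DA},\overrightarrow{DB},\overrightarrow{DC})$, reduce $\det M=0$ to the polynomial identity
\[
\overline{x}^2\xi\zeta+\overline{y}^2\xi\eta+\overline{z}^2\eta\zeta+\overline{x}^2\overline{y}^2\overline{z}^2=0
\]
in the $r$-independent quantities $\xi,\eta,\zeta$, and then recognize this planar conic as the circumscribed circle. Your determinant computation and simplification agree exactly with the paper's equation~\eqref{eq.tetr.for1}.

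The only genuine difference is in the last identification step. The paper (Lemma~\ref{le.tetr.1}) expands brute-force in Cartesian coordinates of $A,B,C,D$ and rewrites the polynomial as $(2C_1t-C_2)^2+(2C_1s-C_3)^2-C_4$, reading off directly that the zero set is a circle and that $A,B,C$ lie on it. You instead first verify that $A,B,C$ are zeros and then isolate the leading quadratic part, using the elimination $\mathbf u=-\mathbf v-\mathbf w$ together with the two-dimensional Lagrange identity $\langle\mathbf v,\mathbf m\rangle\mathbf w-\langle\mathbf w,\mathbf m\rangle\mathbf v=(\mathbf v\wedge\mathbf w)\,\mathbf m^{\perp}$ to see that this part equals $\tfrac12(\mathbf v\wedge\mathbf w)^2(p^2+q^2)$. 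Both arguments are short; yours is a bit more conceptual and makes the role of the nonzero area explicit, while the paper's explicit completed-square form additionally yields the center and radius. The paper also records a second, synthetic proof via Menelaus' and Simson's theorems, which your approach does not touch.
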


The remaining part of this section is devoted to the proof of this proposition.
The idea of paying special attention to degenerate points of the mapping is natural
(compare, for example, with ``the danger cylinder'' \cite{HLON} or with Lemma 4.2 in \cite{Rieck}).

Using Remark \ref{re.nond.1}, we get that all $D$ from the plane $ABC$, $F$ is degenerate, hence, $D \in \DF$.
Now we consider the case when $D$ is not in the plane $ABC$. In this case,
the vectors $\overrightarrow{DA}$, $\overrightarrow{DB}$, and $\overrightarrow{DC}$ are linearly independent, hence $D\in \DF$ if and only if
the determinant $|JF|$ of a $(3\times3)$-matrix, which rows are the coordinates
of the vectors $\overrightarrow{\nabla} F_3$, $\overrightarrow{\nabla} F_2$, $\overrightarrow{\nabla} F_1$ in the basis
$\left(\overrightarrow{DC}, \overrightarrow{DB},\overrightarrow{DA}\right)$, is $0$.

Using \eqref{eq.mapF.2}, we get the following computation of $|JF|$:
\begin{eqnarray*}
|JF|&=&\det \left(
            \begin{array}{c}
              \nabla F_3 \\
              \nabla F_2 \\
              \nabla F_1 \\
            \end{array}
          \right)=-\frac{1}{8}\det\left(
                             \begin{array}{ccc}
                               0 & \frac{y^2-z^2+\overline{x}^2}{y^3z} & \frac{z^2-y^2+\overline{x}^2}{yz^3} \\
                               \frac{x^2-z^2+\overline{y}^2}{x^3z} & 0 & \frac{z^2-x^2+\overline{y}^2}{xz^3} \\
                               \frac{x^2-y^2+\overline{z}^2}{x^3y} & \frac{y^2-x^2+\overline{z}^2}{xy^3} & 0 \\
                             \end{array}
                           \right)\\
&&=-\frac{1}{8x^4y^4z^4}\det\left(
                             \begin{array}{ccc}
                               0 & y^2-z^2+\overline{x}^2 & z^2-y^2+\overline{x}^2 \\
                               x^2-z^2+\overline{y}^2 & 0 & z^2-x^2+\overline{y}^2 \\
                               x^2-y^2+\overline{z}^2 & y^2-x^2+\overline{z}^2 & 0 \\
                             \end{array}
                           \right)\\
&&=-\frac{1}{8x^4y^4z^4}((\overline{x}^2+ z^2-y^2)(\overline{y}^2+x^2-z^2)(\overline{z}^2+y^2-x^2)\\
&&+(\overline{x}^2+ y^2-z^2)(\overline{y}^2+z^2-x^2)(\overline{z}^2+x^2-y^2))\\
&&=-\frac{1}{4x^4y^4z^4}\Bigl(\overline{x}^2(x^2-z^2)(y^2-x^2)+ \overline{y}^2(z^2-y^2)(y^2-x^2)\Bigr. \\
&&+\Bigl. \overline{z}^2(z^2-y^2)(x^2-z^2)+\overline{x}^2\overline{y}^2\overline{z}^2\Bigr).
\end{eqnarray*}

It is easy to show what the condition $|JF|=0$ is equivalent to the condition
\begin{equation}\label{eq.tetr.for1}
\overline{x}^2(x^2-z^2)(y^2-x^2)+ \overline{y}^2(z^2-y^2)(y^2-x^2)+
\overline{z}^2(z^2-y^2)(x^2-z^2)+\overline{x}^2\overline{y}^2\overline{z}^2=0,
\end{equation}
which one can rewrite as follows:
\begin{equation}\label{eq.tetr.for2n}
\frac{x^2-z^2}{\overline{y}^2} \cdot \frac{y^2-x^2}{\overline{z}^2}+\frac{z^2-y^2}{\overline{x}^2} \cdot \frac{y^2-x^2}{\overline{z}^2}+
\frac{z^2-y^2}{\overline{x}^2}\cdot  \frac{x^2-z^2}{\overline{y}^2}+1=0.
\end{equation}

\begin{prop}\label{prop.deg.1}
The equality \eqref{eq.tetr.for1} holds for a point $D=(p,q,r)\in \mathbb{R}^3$ if and only if $D$ lies in the right cylinder $\operatorname{Cyl}$
over the circle circumscribed around the triangle $ABC$.
\end{prop}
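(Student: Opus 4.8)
The plan is to treat \eqref{eq.tetr.for1} as an equation for the Cartesian coordinates $(p,q,r)$ of $D$ and to show that its zero set is precisely the right cylinder over the circumscribed circle of $ABC$. I would split the argument into three stages: reduce the problem to a planar locus, recognize that locus as a circle, and then identify the circle.

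First, I observe that each of the three factors entering \eqref{eq.tetr.for1} is a difference of two squared distances from $D$. By \eqref{eq.twocoord.1}, for instance $x^2-z^2=d(D,C)^2-d(D,A)^2=2(a_1-c_1)p+2(a_2-c_2)q+\mathrm{const}$, so the common $r^2$ and $p^2+q^2$ terms cancel and this factor is an affine function of $(p,q)$ that does not involve $r$; the same holds for $y^2-x^2$ and $z^2-y^2$. Since the left-hand side of \eqref{eq.tetr.for1} is assembled from products of these three factors together with the constant $\overline{x}^2\overline{y}^2\overline{z}^2$, the whole expression is independent of $r$. Hence its zero set is a cylinder with generators orthogonal to the plane $\Pi^0$, and it suffices to describe the planar curve it cuts out in $\Pi^0$.

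The main work is the second stage, and I expect it to be the only delicate point: showing that this planar curve is an honest circle, i.e. that the degree-two part of \eqref{eq.tetr.for1} in $(p,q)$ is a nonzero multiple of $p^2+q^2$. Here I would introduce the edge vectors $\overrightarrow{CA}=A-C$, $\overrightarrow{AB}=B-A$, $\overrightarrow{BC}=C-B$ (which sum to zero and have lengths $\overline{y},\overline{x},\overline{z}$) together with the in-plane position $\mathbf{u}=(p,q)$, so that the linear parts of the three factors are $2\,\overrightarrow{CA}\cdot\mathbf{u}$, $2\,\overrightarrow{AB}\cdot\mathbf{u}$, $2\,\overrightarrow{BC}\cdot\mathbf{u}$. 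Substituting $\overrightarrow{BC}\cdot\mathbf{u}=-\overrightarrow{CA}\cdot\mathbf{u}-\overrightarrow{AB}\cdot\mathbf{u}$ and $|\overrightarrow{BC}|^2-|\overrightarrow{CA}|^2-|\overrightarrow{AB}|^2=2\,\overrightarrow{CA}\cdot\overrightarrow{AB}$, the quadratic part equals $-4$ times
\[
|\overrightarrow{AB}|^2(\overrightarrow{CA}\cdot\mathbf{u})^2-2(\overrightarrow{CA}\cdot\overrightarrow{AB})(\overrightarrow{CA}\cdot\mathbf{u})(\overrightarrow{AB}\cdot\mathbf{u})+|\overrightarrow{CA}|^2(\overrightarrow{AB}\cdot\mathbf{u})^2.
\]
By the Gram (Lagrange) identity — which I would verify by expanding $\mathbf{u}$ in the basis $\overrightarrow{CA},\overrightarrow{AB}$ — this bracket equals $\bigl(|\overrightarrow{CA}|^2|\overrightarrow{AB}|^2-(\overrightarrow{CA}\cdot\overrightarrow{AB})^2\bigr)|\mathbf{u}|^2=4\,\mathrm{Area}(ABC)^2\,(p^2+q^2)$. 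Thus the quadratic part of \eqref{eq.tetr.for1} is $-16\,\mathrm{Area}(ABC)^2\,(p^2+q^2)$, which is nonzero because $ABC$ is non-degenerate; in particular there is no $pq$-term and the $p^2,q^2$ coefficients coincide, so \eqref{eq.tetr.for1} describes a circle in $\Pi^0$.

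Finally, I would check that $A$, $B$, $C$ satisfy \eqref{eq.tetr.for1}. For $D=A$ one has $z=0$, $x=\overline{y}$, $y=\overline{x}$, and the left-hand side vanishes by direct substitution; the cyclic symmetry of \eqref{eq.tetr.for1} then handles $B$ and $C$. Since a circle is determined by three non-collinear points, the real circle obtained in the second stage must be the circumscribed circle of $ABC$, and therefore the full zero set of \eqref{eq.tetr.for1} is the right cylinder $\operatorname{Cyl}$ over it, which proves the proposition.
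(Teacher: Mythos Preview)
Your proof is correct and follows essentially the same strategy as the paper's first proof: reduce to the plane by noting that the differences $x^2-z^2$, $y^2-x^2$, $z^2-y^2$ are independent of $r$, show the resulting planar locus is a circle, and identify it as the circumcircle by checking that $A$, $B$, $C$ lie on it. The paper carries out the middle step by brute-force expansion into the form $(2C_1t-C_2)^2+(2C_1s-C_3)^2-C_4$, whereas you isolate only the quadratic part and recognize it via the Gram/Lagrange identity as $-16\,\mathrm{Area}(ABC)^2(p^2+q^2)$; this is a cleaner way to see the circle structure and to read off the nondegeneracy, but the two arguments are otherwise the same. (The paper also records a second, genuinely different proof via Menelaus' and Simson's theorems, which you may find interesting.)
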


Since the condition \eqref{eq.tetr.for1} is invariant under  substitutions of the form $(x,y,z) \mapsto \left(\sqrt{x^2+l},\sqrt{y^2+l},\sqrt{z^2+l}\right)$
for any constant $l\geq \max\{-x^2,-y^2,-z^2\}$, it
is equivalent both for a point $D$ and its orthogonal projection $\overline{D}$ onto the plane $ABC$.
Hence, it suffices to prove the following result.

\begin{lemma}\label{le.tetr.1}
If the point $D$ is in the plane $ABC$, then the equality \eqref{eq.tetr.for1} holds if and only if the quadrilateral $ABCD$ is cyclic,
i.e., all its vertices lie on a circle.
\end{lemma}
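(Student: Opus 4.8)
The plan is to work entirely in the plane $\Pi^0$ of the base: fix Cartesian coordinates there, keep $A,B,C$ fixed and non-collinear, and regard the point $D=(p,q)$ as variable. I will study the left-hand side of \eqref{eq.tetr.for1} as a function of $(p,q)$. The guiding idea is that $\eqref{eq.tetr.for1}=0$ cuts out a conic in the plane; I will show this conic is exactly the circle circumscribed about $ABC$. Since $A,B,C$ already lie on that circle, the quadrilateral $ABCD$ is cyclic precisely when $D$ lies on it, which will give the statement.

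First I would note that each squared distance $x^2=|\overrightarrow{DC}|^2$, $y^2=|\overrightarrow{DB}|^2$, $z^2=|\overrightarrow{DA}|^2$ has the form $p^2+q^2+(\text{affine-linear in }p,q)$, so every difference $x^2-z^2$, $y^2-x^2$, $z^2-y^2$ is \emph{affine-linear} in $(p,q)$, the quadratic part $p^2+q^2$ cancelling. Hence the first three summands of \eqref{eq.tetr.for1}, being products of two such differences, are polynomials of degree $\le 2$ in $(p,q)$, and adding the constant $\overline x^2\overline y^2\overline z^2$ leaves the degree $\le 2$. Thus $\eqref{eq.tetr.for1}=0$ is a (possibly degenerate) conic in the variable $D$.

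The main step, which I expect to be the only real obstacle, is to compute the purely quadratic part of this polynomial. Write $\overrightarrow u=\overrightarrow{AB}$, $\overrightarrow v=\overrightarrow{BC}$, $\overrightarrow w=\overrightarrow{CA}$, so that $\overrightarrow u+\overrightarrow v+\overrightarrow w=0$ and $\overline x=|\overrightarrow u|$, $\overline z=|\overrightarrow v|$, $\overline y=|\overrightarrow w|$, and let $\mathbf p=(p,q)$ be the position vector of $D$. The linear parts of $x^2-z^2$, $y^2-x^2$, $z^2-y^2$ are $2\,\mathbf p\cdot\overrightarrow w$, $2\,\mathbf p\cdot\overrightarrow v$, $2\,\mathbf p\cdot\overrightarrow u$ respectively, so the quadratic part of \eqref{eq.tetr.for1} equals $4\big(|\overrightarrow u|^2(\mathbf p\cdot\overrightarrow w)(\mathbf p\cdot\overrightarrow v)+|\overrightarrow w|^2(\mathbf p\cdot\overrightarrow u)(\mathbf p\cdot\overrightarrow v)+|\overrightarrow v|^2(\mathbf p\cdot\overrightarrow u)(\mathbf p\cdot\overrightarrow w)\big)$. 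Eliminating $\overrightarrow w=-\overrightarrow u-\overrightarrow v$ and using $|\overrightarrow w|^2-|\overrightarrow u|^2-|\overrightarrow v|^2=2\,\overrightarrow u\cdot\overrightarrow v$, this bracket collapses to $-\big|(\mathbf p\cdot\overrightarrow u)\overrightarrow v-(\mathbf p\cdot\overrightarrow v)\overrightarrow u\big|^2$, whose squared norm computed in coordinates is $(\overrightarrow u\times\overrightarrow v)^2(p^2+q^2)=4S^2(p^2+q^2)$, where $S$ is the area of $ABC$ (using $|\overrightarrow u\times\overrightarrow v|=2S$). Therefore the quadratic part equals $-16S^2(p^2+q^2)$, a \emph{nonzero} multiple of $p^2+q^2$ because $S\neq 0$. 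Consequently the conic $\eqref{eq.tetr.for1}=0$ is a genuine circle.

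Finally I would verify by direct substitution that $A,B,C$ lie on this circle. For $D=A$ one has $z=0$, $y=\overline x$, $x=\overline y$; inserting these, the four summands of \eqref{eq.tetr.for1} cancel in pairs and the sum is $0$. The cases $D=B$ and $D=C$ are the same check after the cyclic relabelling $(x,\overline x)\to(y,\overline y)\to(z,\overline z)$ under which \eqref{eq.tetr.for1} is invariant. Thus the circle $\eqref{eq.tetr.for1}=0$ passes through the three non-collinear points $A,B,C$; since such a circle is unique, it is the circumscribed circle of $ABC$. Hence, for a point $D$ in the plane, \eqref{eq.tetr.for1} holds if and only if $D$ lies on the circumcircle of $ABC$, i.e. if and only if $ABCD$ is cyclic, which is the assertion of the lemma.
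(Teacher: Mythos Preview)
Your proof is correct and follows essentially the same strategy as the paper's: both work in planar Cartesian coordinates, show that the left-hand side of \eqref{eq.tetr.for1} is a degree-two polynomial in $D$ whose zero set is a circle, and verify that $A,B,C$ lie on it, forcing it to be the circumcircle. The only difference is in execution---the paper completes the square explicitly to read off the center and radius, while you identify the quadratic part as $-16S^2(p^2+q^2)$ via the vector identity and then plug in the vertices; both arguments reach the same conclusion by the same route.
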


\begin{proof} Let us consider some Cartesian coordinate system in the plane $ABC$. Suppose that $A=(a_1,a_2)$, $B=(b_1,b_2)$, $C=(c_1,c_2)$, $D=(t,s)$.
Then it is easy to check that the left side of  \eqref{eq.tetr.for1} with the negative sign is as follows:
$$
(2C_1t-C_2)^2+(2C_1s-C_3)^2-C_4,
$$
where
\begin{eqnarray*}
C_1&=&a_1b_2-a_1c_2-a_2b_1+a_2c_1+b_1c_2-b_2c_1,\\
C_2&=&(a_1^2+a_2^2)(b_2-c_2) + (b_1^2+b_2^2)(c_2-a_2) + (c_1^2+c_2^2)(a_2-b_2),\\
C_3&=&(a_1^2+a_2^2)(c_1-b_1) + (b_1^2+b_2^2)(a_1-c_1) + (c_1^2+c_2^2)(b_1-a_1),\\
C_4&=& \bigl((b_1-c_1)^2+(b_2-c_2)^2\bigr)\bigl((a_1-c_1)^2+(a_2-c_2)^2\bigr)\bigl((b_1-a_1)^2+(b_2-a_2)^2\bigr).
\end{eqnarray*}
Therefore, the set given by \eqref{eq.tetr.for1} in the plane of $ABC$ is the circle circumscribed around a triangle $ABC$ (it is easy to check that the points
$A=(a_1,a_2)$, $B=(b_1,b_2)$, $C=(c_1,c_2)$ satisfy~\eqref{eq.tetr.for1}).
Note also that $|C_1|=\left| (b_1 - a_1)(c_2 - a_2) - (b_2 - a_2)(c_1 - a_1)\right|$
is equal to twice the area~$S$ of the triangle $ABC$
and $\dfrac{\sqrt{C_4}}{2|C_1|}= \dfrac{\overline{x}\cdot\overline{y}\cdot\overline{z}}{4S}$ is the radius of the circumscribed circle.
\end{proof}

\begin{remark}\label{re.curcucumsribed}
Note that Lemma \ref{le.tetr.1} gives a criterion for the quadrilateral $ABCD$ to be inscribed in a circle.
It should be noted that several similar results are known. The most famous is Ptolemy's theorem.
A review of the relevant results can be found in \cite{Jos2019.1, Jos2019.2, RaAj2003, Sad2004, MS}.
\end{remark}

\begin{figure}[t]
\center{\includegraphics[width=0.4\textwidth]{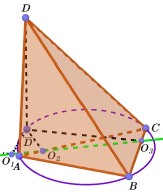}}\\
\caption{A tetrahedron and Menelaus's theorem.}
\label{jfp0}
\end{figure}
\medskip

Note, that Lemma \ref{le.tetr.1} immediately implies Proposition \ref{prop.deg.1}, which in turn implies Proposition \ref{pr.nondeg.main}.
\smallskip

Below we will consider another proof of Proposition \ref{prop.deg.1} based on the use of the classical approach.

\begin{proof}[Second proof of Proposition \ref{prop.deg.1}]
One can also consider a proof Proposition \ref{prop.deg.1} based on the theorems of Menelaus and Simpson.
Indeed, since
$$
\overline{x}^2+ z^2-y^2=2\overline{x}\cdot z \cdot\cos \angle DAB,~~ \overline{y}^2+ x^2-z^2=2\overline{y}\cdot x \cdot\cos \angle DCA,
$$
$$
\overline{z}^2+ y^2-x^2=2\overline{z}\cdot y \cdot\cos \angle DBC,~~ \overline{x}^2+ y^2-z^2=2\overline{x}\cdot y \cdot\cos \angle DBA,
$$
$$
\overline{y}^2+ z^2-x^2=2\overline{y}\cdot z \cdot\cos \angle DAC,~~
\overline{z}^2+ x^2-y^2=2\overline{z}\cdot x \cdot\cos \angle DCB,
$$
we get that
\begin{eqnarray*}
\overline{x}^2(x^2-z^2)(y^2-x^2)+ \overline{y}^2(z^2-y^2)(y^2-x^2)+
\overline{z}^2(z^2-y^2)(x^2-z^2)+\overline{x}^2\overline{y}^2\overline{z}^2
\\
=-4 xyz \overline{x}~\overline{y}~\overline{z} \\
\times \Bigl(\cos \angle DAB \cdot\cos \angle DCA\cdot\cos \angle DBC+
\cos \angle DBA\cdot\cos \angle DAC\cdot\cos \angle DCB\Bigr)=0.
\end{eqnarray*}

Let us recall some notation. Let  $\overrightarrow{KL}$ and $\overrightarrow{MN}$ be collinear vectors. Denote by $\frac{\overline{KL}}{\overline{MN}}$
the quantity $\pm\frac{d(K,L)}{d(M,N)}$, where the plus
sign is taken if the vectors  $\overrightarrow{KL}$ and $\overrightarrow{MN}$ are directed in the same way, and the minus sign if the vectors are
directed opposite to each other.

The orthogonal projection of the point $D$ onto the plane $ABC$ will be denoted by $\overline{D}$. Let us  consider
points $O_1, O_2, O_3$ on the straight lines $AB, AC, BC$ respectively, such that $\overline{D}O_1\bot AB$,
$\overline{D}O_2\bot AC$ and $\overline{D}O_3\bot BC$ (see Fig.~\ref{jfp0}).
Since $DO_1\bot AB$, $DO_2\bot AC$ and $DO_3\bot BC$ we have (in the above notation) the following equalities:
$$
\frac{\cos \angle DAB}{\cos \angle DBA}=-\frac{y}{z} \cdot \frac{\overline{AO_1}}{\overline{BO_1}}, \quad
\frac{\cos \angle DCA}{\cos \angle DAC}= -\frac{z}{x} \cdot \frac{\overline{CO_2}}{\overline{AO_2}}, \quad
\frac{\cos \angle DBC}{\cos \angle DCB}= -\frac{x}{y} \cdot \frac{\overline{BO_3}}{\overline{CO_3}}.
$$
Hence, $|JF|=0$ if and only if
$$
\frac{\overline{AO_1}}{\overline{BO_1}} \cdot \frac{\overline{CO_2}}{\overline{AO_2}}\cdot \frac{\overline{BO_3}}{\overline{CO_3}} =1,
$$
which implies that the points $O_1, O_2, O_3$ are collinear by Menelaus's theorem (see, e.g., Theorem 218 in \cite{Johnson} or Problem 5.69 in \cite{Pra2006}).
Therefore, the point $\overline{D}$ lies on the circle circumscribed around the triangle $ABC$ by Simpson's theorem
(see, e.g., Theorem 192 in \cite{Johnson} or Problem 5.105 in \cite{Pra2006}).
\end{proof}
\medskip

 \begin{figure}[t]
\center{\includegraphics[width=0.4\textwidth]{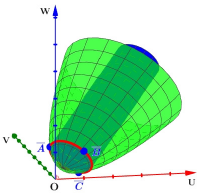}}\\
\caption{The description of suitable tetrahedra using distance coordinates.}
\label{jfp}
\end{figure}

\begin{remark}\label{re.dist.plane}
Note, that \eqref{eq.tetr.for1} obviously implies the following equality:
\begin{eqnarray}\label{eq.cyl.1}\notag
-\overline{x}^2u^2-\overline{y}^2v^2-\overline{z}^2w^2+(\overline{x}^2+\overline{y}^2-\overline{z}^2)uv+
(\overline{x}^2-\overline{y}^2+\overline{z}^2)uw&&\\
+(\overline{y}^2+\overline{z}^2-\overline{x}^2)vw+\overline{x}^2\overline{y}^2\overline{z}^2&=&0\,,
\end{eqnarray}
where $u:=x^2$, $v:=y^2$, $w=z^2$.

It is easy to see that the intersection of cylinder \eqref{eq.cyl.1} with the elliptic paraboloid \eqref{eq.elpar.1} lies in the plane
\begin{equation}\label{eq.plane.1}
\bigl(\overline{y}^2+\overline{z}^2-\overline{x}^2\bigr)\overline{x}^2\,u + \bigl(\overline{x}^2+\overline{z}^2-\overline{y}^2\bigr)\overline{y}^2\,v
+ \bigl(\overline{x}^2 + \overline{y}^2 - \overline{z}^2\bigr)\overline{z}^2\,w - 2\overline{x}^2\overline{y}^2\overline{z}^2=0.
\end{equation}
In particular, this intersection is an ellipse in the corresponding distance coordinates, see Fig.~\ref{jfp}.
\end{remark}
\smallskip

The inverse function theorem and Proposition \ref{pr.nondeg.main} imply the following important result.

\begin{theorem}\label{theo.nondeg}
The map $F:\GT \to \mathbb{R}^3$ is non-degenerate at every point $(p,q,r) \in \Pi^+ \setminus \operatorname{Cyl}$.
In particular, the map $F$ is locally bijective on the set $\Pi^+ \setminus \operatorname{Cyl}$
and, consequently,  $F\bigl(\Pi^+ \setminus \operatorname{Cyl} \bigr)$ is an open subset of full measure in the closure $\CFT$ of $F(\GT)$.
\end{theorem}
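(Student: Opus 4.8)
The plan is to read off the first two assertions directly from Proposition~\ref{pr.nondeg.main} and the inverse function theorem, and then to establish the ``full measure'' claim by a dimension count, isolating the genuinely delicate behaviour near the base vertices. First, the non-degeneracy statement is immediate in contrapositive form: if $(p,q,r)\in\Pi^+\setminus\operatorname{Cyl}$ then $r>0$, so $D=(p,q,r)\notin\Pi^0$, and by hypothesis $D\notin\operatorname{Cyl}$; hence by Proposition~\ref{pr.nondeg.main} the point $D$ is not degenerate for $F$. Since $F$ is $C^\infty$ on $\GT$ (the ratios in \eqref{eq.mapF.1} have nonvanishing denominators there, as $D\notin\{A,B,C\}$ forces $x,y,z>0$) and its differential has rank $3$ at every point of the open set $\Pi^+\setminus\operatorname{Cyl}$, the inverse function theorem makes $F$ a local diffeomorphism there. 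In particular $F$ is locally injective, and, being an open map, it carries the open set $\Pi^+\setminus\operatorname{Cyl}$ onto an open subset $U:=F(\Pi^+\setminus\operatorname{Cyl})\subseteq\CFT$ of $\mathbb{R}^3$.

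It remains to prove that $\CFT\setminus U$ is Lebesgue-null. The idea is to enumerate which points of $\CFT=\overline{F(\GT)}$ are not already values of $F$ on $\GT$. Take $\xi=\lim_n F(D_n)\in\CFT$ with $D_n\in\GT$. If $\{D_n\}$ is unbounded, then after passing to a subsequence $\min\{x,y,z\}\to\infty$, and Lemma~\ref{le.infin.1} forces $\xi=(1,1,1)$. Otherwise a subsequence converges to some $D_*\in\overline{\GT}=\{r\ge 0\}$; if $D_*\in\GT$ then $\xi=F(D_*)\in F(\GT)$ by continuity of $F$ on $\GT$, and the only other possibility is $D_*\in\{A,B,C\}$. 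Writing $\operatorname{Cl}_A,\operatorname{Cl}_B,\operatorname{Cl}_C$ for the cluster sets of $F$ at the three vertices, we obtain
\[
\CFT=F(\GT)\,\cup\,\{(1,1,1)\}\,\cup\,\operatorname{Cl}_A\cup\operatorname{Cl}_B\cup\operatorname{Cl}_C .
\]
Since $\GT\setminus(\Pi^+\setminus\operatorname{Cyl})=(\operatorname{Cyl}\cap\Pi^+)\cup(\Pi^0\setminus\{A,B,C\})$, subtracting $U$ gives
\[
\CFT\setminus U\ \subseteq\ F(\operatorname{Cyl}\cap\Pi^+)\,\cup\,F(\Pi^0\setminus\{A,B,C\})\,\cup\,\{(1,1,1)\}\,\cup\,\operatorname{Cl}_A\cup\operatorname{Cl}_B\cup\operatorname{Cl}_C .
\]

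Each set on the right is contained in a two-dimensional set and is therefore $3$-dimensional Lebesgue-null. Indeed $\operatorname{Cyl}\cap\Pi^+$ and $\Pi^0\setminus\{A,B,C\}$ are smooth surfaces inside $\GT$, so their images under the $C^1$ map $F$ are countable unions of Lipschitz images of planar sets and hence carry zero volume, and $\{(1,1,1)\}$ is obviously null. The real obstacle is the three cluster sets, because $F$ does not extend continuously to $A,B,C$ (two of the three cosines in \eqref{eq.mapF.1} become $0/0$ there). The way around this is the observation that at each vertex one cosine nevertheless has a determinate limit: as $D\to A$ one has $x=d(D,C)\to\overline{y}$ and $y=d(D,B)\to\overline{x}$, so by \eqref{eq.mapF.1}
\[
F_1=\cos\overline{\alpha}\ \longrightarrow\ \frac{\overline{x}^2+\overline{y}^2-\overline{z}^2}{2\,\overline{x}\,\overline{y}}=\cos\angle BAC ,
\]
independently of the direction of approach. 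Hence $\operatorname{Cl}_A$ lies in the plane $\{\cos\overline{\alpha}=\cos\angle BAC\}$, and symmetrically $\operatorname{Cl}_B\subset\{\cos\overline{\beta}=\cos\angle ABC\}$ and $\operatorname{Cl}_C\subset\{\cos\overline{\gamma}=\cos\angle ACB\}$; each cluster set is thus planar and null.

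Combining these facts, $\CFT\setminus U$ is a finite union of Lebesgue-null sets, so $U=F(\Pi^+\setminus\operatorname{Cyl})$ is an open subset of full measure in $\CFT$, as required. I expect the only subtle point to be the last one, namely the behaviour of $F$ as $D$ approaches a base vertex; the remark that one of the three angles always tends to the corresponding angle of the base is precisely what prevents each vertex from contributing a set of positive volume, while a finer description of these limiting points is deferred to Proposition~\ref{pr.closure.space.1}.
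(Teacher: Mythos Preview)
Your argument is correct. The paper's own proof is the one-line remark that the theorem follows from Proposition~\ref{pr.nondeg.main} together with the inverse function theorem; you follow exactly this route for the non-degeneracy and openness claims, and in addition supply a careful justification of the full-measure assertion (via a dimension count and the key observation that each vertex cluster set lies in a coordinate hyperplane because one of the three cosines has a determinate limit) which the paper leaves implicit here and only substantiates later, in Section~\ref{sec.limit.point} and Proposition~\ref{pr.closure.space.1}.
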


\section{Properties of the map $F$ on the plane $ABC$}\label{sec.image.plane}

In this section we study the images of  points $D \in \Pi^0\setminus \{A,B,C\}$ under the map $F$. At first we consider one simple result.
Recall that
$F(D)\in \mathbb{BP}$ means that
$$
1+2\cos \overline{\alpha}\cdot \cos \overline{\beta} \cdot \cos \overline{\gamma} - \cos^2 \overline{\alpha} - \cos^2 \overline{\beta} - \cos^2 \overline{\gamma}=0,
$$
where $\overline{\alpha}=\angle BDC$, $\overline{\beta}= \angle ADC$, $\overline{\gamma}= \angle  ADB$.

\begin{lemma}\label{le.inpl.1}
If $D \in \Pi^0\setminus \{A,B,C\}$, then $F(D)\in \mathbb{BP}$. As a consequence, we have $F \Bigl( \Pi^0\setminus \{A,B,C\}\Bigr )\subset \mathbb{BP}$.
\end{lemma}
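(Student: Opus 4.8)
The plan is to recognize the cubic form defining $\mathbb{BP}$ as a Gram determinant and to exploit the linear dependence of the edge vectors at a coplanar apex. For $D \in \Pi^0\setminus\{A,B,C\}$ the distances $x=d(D,C)$, $y=d(D,B)$, $z=d(D,A)$ are all strictly positive, so the unit vectors $\vec a=\overrightarrow{DA}/z$, $\vec b=\overrightarrow{DB}/y$, $\vec c=\overrightarrow{DC}/x$ are well defined. By the definition of the angles at $D$ we have $\vec b\cdot\vec c=\cos\overline{\alpha}$, $\vec a\cdot\vec c=\cos\overline{\beta}$, and $\vec a\cdot\vec b=\cos\overline{\gamma}$, so the Gram matrix of the triple $(\vec a,\vec b,\vec c)$ is the symmetric matrix with unit diagonal and off-diagonal entries $\cos\overline{\gamma},\cos\overline{\beta},\cos\overline{\alpha}$. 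Its determinant equals exactly $1+2\cos\overline{\alpha}\cos\overline{\beta}\cos\overline{\gamma}-\cos^2\overline{\alpha}-\cos^2\overline{\beta}-\cos^2\overline{\gamma}$, which is precisely the left-hand side of the equation \eqref{eq.pillowc} defining $\mathbb{BP}$, evaluated at $F(D)$.

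First I would record this Gram-determinant identity by a direct $3\times 3$ expansion; it is the same algebraic identity that underlies the volume formula \eqref{eq.vol.1}, so no genuinely new computation is required. The reason for phrasing it through the Gram matrix rather than through \eqref{eq.vol.1} is that the Gram identity holds for \emph{any} three vectors, degenerate configurations included, whereas \eqref{eq.vol.1} was stated for honest tetrahedra.

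Next I would invoke Remark \ref{re.nond.1}: since $D$ lies in the plane $ABC$ (that is, $r=0$), the vectors $\overrightarrow{DA},\overrightarrow{DB},\overrightarrow{DC}$ are linearly dependent, and hence so are the unit vectors $\vec a,\vec b,\vec c$. A Gram matrix of a linearly dependent family is singular, so its determinant vanishes. Combining this with the identity of the previous step shows that the defining expression of $\mathbb{BP}$ is zero at $F(D)$, i.e. $F(D)\in\mathbb{BP}$; the inclusion $F\bigl(\Pi^0\setminus\{A,B,C\}\bigr)\subset\mathbb{BP}$ is then immediate.

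There is essentially no hard part here; the only things to watch are that the unit vectors genuinely exist (guaranteed by $D\notin\{A,B,C\}$, so that $x,y,z>0$) and that one resists quoting \eqref{eq.vol.1} directly, since that formula is stated for non-degenerate tetrahedra while the configuration at hand is degenerate. An equivalent route, if preferred, is to approach $D$ by points off the plane and pass to the limit in \eqref{eq.vol.1}, using $V\to 0$ together with $x,y,z$ bounded away from $0$; the Gram-determinant argument simply renders this limiting step unnecessary.
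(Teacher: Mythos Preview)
Your argument is correct and is essentially the same as the paper's, which simply observes that a degenerate tetrahedron has zero volume and then reads off the vanishing of the cubic form from \eqref{eq.vol.1}. Your Gram-determinant phrasing is the same identity (indeed \eqref{eq.vol.1} \emph{is} the Gram-determinant formula up to the factor $x^2y^2z^2/36$), but it sidesteps the minor worry you flag about quoting a volume formula in the degenerate case; the paper implicitly treats \eqref{eq.vol.1} as valid there, while you make the linear-dependence step explicit via Remark~\ref{re.nond.1}.
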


\begin{proof}
Since  $D \in \Pi^0\setminus \{A,B,C\}$, then the volume of the tetrahedron $ABCD$ is zero. Hence,
$1+2\cos \overline{\alpha}\cdot \cos \overline{\beta}\cdot \cos \overline{\gamma} - \cos^2 \overline{\alpha} - \cos^2 \overline{\beta} - \cos^2 \overline{\gamma}=0$ by \eqref{eq.vol.1}.
\end{proof}

\begin{remark}
There are more simple arguments to prove Lemma \ref{le.inpl.1}.
We have one of the following possibilities for points $D\in \Pi^0\setminus \{A,B,C\}$:
\begin{eqnarray*}
\angle ADB=\angle BDC +\angle CDA,&&
\angle BDC =\angle CDA+\angle ADB,\\
\angle CDA=\angle BDC +\angle ADB,&&
\angle ADB+\angle BDC +\angle CDA =2\pi.
\end{eqnarray*}
It is easy to see that $F(D)=( \cos\angle BDC, \cos\angle CDA,\cos\angle ADB)\in \mathbb{BP}$ in all these cases.
Indeed, if, say $\angle BDC=\angle CDA +\angle ADB$, then
$x=yz-\sqrt{(1-y^2)(1-z^2)}$, where
$x=\cos\angle BDC$, $y=\cos\angle CDA$, and $z= \cos\angle ADB$.
Then we get $(x-yz)^2=(1-y^2)(1-z^2)$, which gives the equality for
$\mathbb{BP}$.
\end{remark}

Let us consider the decomposition
\begin{equation}\label{eq.decom.plane1}
\Pi^0  = \mathbb{B}_{-}\bigcup S \bigcup \mathbb{B}_{+},
\end{equation}
where $S$ is the circumscribed circle for the triangle $ABC$, $\mathbb{B}_{-}$ is the open disc bounded by $S$, and $\mathbb{B}_{+}$ is the set of points outside $S$.
\medskip

\begin{figure}[t]
\center{\includegraphics[width=0.6\textwidth]{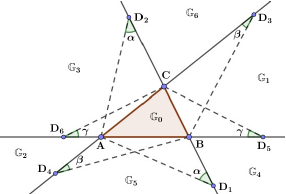}}\\
\caption{Important subsets of the plane $ABC$.}
\label{break_plane}
\end{figure}

Recall that the map $F$ is not defined at the points $A,B,C \in S$.
Let us consider the following important points in the ``pillowcase'' $\mathbb{BP}$ (see \eqref{eq.pillowc}):
\begin{eqnarray}\label{eq.spec.points.1} \notag
\widetilde{A}&:=&\Bigl(-\cos \angle BAC, \cos \angle ABC, \cos \angle ACB\Bigl),\\
\widetilde{B}&:=&\Bigl(\cos \angle BAC, -\cos \angle ABC, \cos \angle ACB\Bigl),\\\notag
\widetilde{C}&:=&\Bigl(\cos \angle BAC, \cos \angle ABC, -\cos \angle ACB\Bigl).
\end{eqnarray}

\begin{lemma}\label{le.map.plane.1}
The map $F$ is locally constant and takes only three values on the set \linebreak $S \setminus  \{A,B,C\}$.
Moreover,  if $D$ is from the arc $BC$,  $AC$, or $AB$, then $F(D)$ is equal to $\widetilde{A}$, $\widetilde{B}$, or $\widetilde{C}$, respectively.
\end{lemma}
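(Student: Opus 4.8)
The plan is to read off the three angles at $D$ from the inscribed angle theorem. Since $A,B,C\in S$, these three points split $S\setminus\{A,B,C\}$ into three open arcs: the arc $BC$ (the one not containing $A$), the arc $AC$ (not containing $B$), and the arc $AB$ (not containing $C$). I will show that $F$ is constant on each of these arcs, with the three values being precisely $\widetilde{A}$, $\widetilde{B}$, $\widetilde{C}$ from \eqref{eq.spec.points.1}; local constancy then follows immediately, since the three open arcs cover $S\setminus\{A,B,C\}$.

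First I would fix a point $D$ on the arc $BC$ not containing $A$ and evaluate $\overline{\alpha}=\angle BDC$, $\overline{\beta}=\angle ADC$, $\overline{\gamma}=\angle ADB$. The angle $\angle BDC$ subtends the chord $BC$; because $D$ and $A$ lie on opposite arcs determined by $BC$, the two inscribed angles are supplementary, so $\angle BDC=\pi-\angle BAC$ and hence $\cos\overline{\alpha}=-\cos\angle BAC$. The angle $\angle ADC$ subtends the chord $AC$; here the decisive observation is that the arc $BC$ not containing $A$ lies entirely inside the arc determined by $AC$ that contains $B$, so $D$ and $B$ are on the same arc and $\angle ADC=\angle ABC$, giving $\cos\overline{\beta}=\cos\angle ABC$. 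Likewise $\angle ADB$ subtends $AB$, and the same arc $BC$ lies inside the arc determined by $AB$ that contains $C$, so $\angle ADB=\angle ACB$ and $\cos\overline{\gamma}=\cos\angle ACB$. Comparing with \eqref{eq.spec.points.1}, this yields $F(D)=\widetilde{A}$ for every such $D$.

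Because these three cosines do not depend on where $D$ sits on the open arc $BC$, the map $F$ is constant there. Applying the cyclic symmetry $A\to B\to C\to A$, which permutes the three open arcs and correspondingly sends $\widetilde{A}\to\widetilde{B}\to\widetilde{C}$ (and induces $\overline{\alpha}\to\overline{\beta}\to\overline{\gamma}\to\overline{\alpha}$ on the angles, as in Proposition \ref{prop.rieck}), I obtain $F(D)=\widetilde{B}$ on the arc $AC$ and $F(D)=\widetilde{C}$ on the arc $AB$. Since $F$ is constant on each of the three arcs covering $S\setminus\{A,B,C\}$, the map is locally constant and takes exactly the three stated values, which proves the lemma.

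The step requiring the most care is verifying the ``same arc / opposite arc'' relationships for a point $D$ on the arc $BC$, namely that $D$ falls on the $B$-side of the chord $AC$ and on the $C$-side of the chord $AB$, while being separated from $A$ by the chord $BC$. This is a purely combinatorial fact about the cyclic order of four concyclic points, which I would settle once and for all by noting that the arc $BC$ not containing $A$ is a sub-arc both of the $AC$-arc through $B$ and of the $AB$-arc through $C$; all remaining cases then follow by the cyclic symmetry, so no further case analysis is needed.
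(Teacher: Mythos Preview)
Your proof is correct and follows essentially the same approach as the paper: both arguments apply the inscribed angle theorem on $S$ to read off the three angles at $D$, obtaining the supplementary angle for the chord separating $D$ from the opposite vertex and equal angles for the other two chords. The paper simply writes out all three arcs explicitly, whereas you treat the arc $BC$ in detail and then invoke the cyclic symmetry; your additional remarks on the ``same arc / opposite arc'' verification are a bit more careful than the paper's terse version but not materially different.
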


\begin{proof}
It is easy to see that the map $F$ is locally constant of the set
$S \setminus  \{A,B,C\}$. Indeed, $F(D)=\bigl(\cos\angle BDC,\cos\angle ADC,\cos \angle ADB\bigl)$ by the definition of $F$.
If $D$ is on the arc $AB$ of $S$, then $F(D)=(\cos \angle BAC, \cos \angle ABC, -\cos \angle ACB)=\widetilde{C}$
due to $\angle BDC=\angle BAC$, $\angle ADC= \angle ABC$, and $\angle ADB=\pi- \angle ACB$.
If $D$ is on the arc $BC$ of $S$, then $F(D)=(-\cos \angle BAC, \cos \angle ABC, \cos \angle ACB)=\widetilde{A}$ (due to $\angle BDC=\pi- \angle BAC$, $\angle ADC= \angle ABC$, and $\angle ADB=\angle ACB$).
Finally, if $D$ is on the arc $AC$ of $S$, then $F(D)=(\cos \angle BAC, -\cos \angle ABC, \cos \angle ACB)=\widetilde{B}$.
\end{proof}
\smallskip

Moreover, we have the following result.

\begin{prop}\label{pr.map.plane.1}
The map $F$ is locally constant and takes only three values on the set $S \setminus  \{A,B,C\}$.
Moreover, the map $F$ is injective on the set $\mathbb{B}_{-}$ and is not injective on the set $\mathbb{B}_{+}$.
\end{prop}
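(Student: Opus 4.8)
The first assertion is exactly the content of Lemma~\ref{le.map.plane.1}, so the real task is the injectivity dichotomy, and the plan is to work with the planar loci of constant subtended angle. For a point $D\in\Pi^0\setminus\{A,B,C\}$ the triple $F(D)$ records the cosines of the angles $\angle BDC,\angle CDA,\angle ADB$ under which the three sides are seen from $D$; since these angles lie in $(0,\pi)$, they are recovered from $F(D)$. For a fixed side, say $BC$, and a fixed value $\overline{\alpha}\in(0,\pi)$, the locus $\{D\in\Pi^0\,|\,\angle BDC=\overline{\alpha}\}$ is the planar section $T_{BC}^{\,\overline{\alpha}}=\mathbb{T}_{BC}^{\,\overline{\alpha}}\bigcap\Pi^0$, which by Section~\ref{sec.cart.coord} is a pair of congruent circular arcs through $B$ and $C$ symmetric in the line $BC$; call the two circles carrying them $c_{BC}^{+}$ (the one on the side of $A$) and $c_{BC}^{-}$. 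Thus $F^{-1}(F(D))\bigcap\Pi^0$ is the intersection of three such two-arc loci, and the whole problem reduces to deciding which arcs are the relevant ones. The elementary fact I would use throughout is that two distinct circles sharing the vertex $C$ meet in at most one further point: once the correct circle $c_{BC}^{\pm}$ through $B,C$ and the correct circle $c_{CA}^{\pm}$ through $C,A$ are singled out, their second intersection is unique and must equal $D$ (the two circles are indeed distinct, since their coincidence would force $D\in S$).

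For injectivity on $\mathbb{B}_{-}$ I would proceed as follows. By the remark following Lemma~\ref{le.inpl.1}, every point of $\Pi^0\setminus\{A,B,C\}$ satisfies one of the four relations $\overline{\alpha}+\overline{\beta}+\overline{\gamma}=2\pi$, $\overline{\alpha}=\overline{\beta}+\overline{\gamma}$, $\overline{\beta}=\overline{\alpha}+\overline{\gamma}$, $\overline{\gamma}=\overline{\alpha}+\overline{\beta}$, and in fact exactly one of them (the angles, hence the relation, are determined by their cosines). Inside the circle $S$ these four alternatives correspond precisely to the four subregions of $\mathbb{B}_{-}$: the interior of the triangle and the three circular segments cut off by the sides. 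In each such subregion the side of every line $BC$, $CA$, $AB$ on which $D$ lies is fixed, so the triple $F(D)$ determines all three circles $c_{BC}^{\pm},c_{CA}^{\pm},c_{AB}^{\pm}$; consequently $D$ is forced to be the unique second intersection of $c_{BC}$ and $c_{CA}$, and two points of $\mathbb{B}_{-}$ with the same image coincide.

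For non-injectivity on $\mathbb{B}_{+}$ the point is that outside $S$ a single relation no longer pins down the arcs. Taking the relation $\overline{\alpha}=\overline{\beta}+\overline{\gamma}$, its region in the plane has, outside $S$, (at least) two components: the exterior part lying beyond the side $BC$, and the exterior part lying beyond the vertex $A$. Along these two components $D$ lies on opposite sides of each of the three lines, so they single out opposite circles and yield two distinct second intersections carrying the same three angles. I would make this explicit by choosing $\overline{\alpha},\overline{\beta},\overline{\gamma}$ all small with $\overline{\alpha}=\overline{\beta}+\overline{\gamma}$: by Lemma~\ref{le.infin.1} both preimages then recede to infinity and so lie safely in $\mathbb{B}_{+}$, and a direct computation exhibits the two distinct preimages. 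For a base symmetric in the perpendicular bisector of $BC$ one may place both of them on that axis, where $\angle CDA=\angle ADB$ holds automatically and $\angle BDC=2\,\angle CDA$ holds on both rays of the axis, which already shows the two points share the same image.

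The main obstacle is the bookkeeping in the injectivity step: one must check carefully that inside $S$ the four angle-relations really match the four subregions and that in each subregion the side relative to all three lines is constant, so that the defining circles are genuinely determined; equivalently, one must rule out that the ``other'' arc-choices, which are responsible for the second preimage outside $S$, ever produce a second intersection inside $S$. This is precisely the inscribed-angle comparison that distinguishes the interior from the exterior of the circle through $A,B,C$, and it is the one place where the hypothesis $D\in\mathbb{B}_{-}$ is essential.
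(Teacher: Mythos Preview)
Your argument is correct, and for the injectivity half it follows a genuinely different path from the paper's. The paper argues by contradiction: given $D_1\neq D_2$ in $\mathbb{B}_{-}$ with $F(D_1)=F(D_2)$, it looks at which sides of the triangle the segment $[D_1,D_2]$ crosses. If it misses two sides, the two-circles-through-a-common-vertex trick already forces $D_1=D_2$; if it crosses exactly two sides, a convex pentagon $AD_1BCD_2$ yields incompatible angle inequalities. Your route instead partitions $\mathbb{B}_{-}$ into the open triangle and the three circular caps and observes that the additive relation among $\overline{\alpha},\overline{\beta},\overline{\gamma}$ singles out the piece, after which the two-circle argument finishes. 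Your approach is more structural and explains \emph{why} $\mathbb{B}_{-}$ is special (each cap lies on a fixed side of all three lines because the chord and its arc stay in the closed disk), whereas the paper's pentagon step is a shorter ad hoc contradiction that avoids the bookkeeping you flag at the end. Both ultimately rest on the same ``two circles through a shared vertex meet in at most one further point'' observation.

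For non-injectivity on $\mathbb{B}_{+}$ the paper is considerably more concrete: it simply places $D_1,D_2$ on the line $BC$, one beyond $B$ and one beyond $C$, with $\angle AD_1D_2=\angle AD_2D_1=\alpha$; then $\angle BD_iC=0$ and $\angle AD_iB=\angle AD_iC=\alpha$, so $F(D_1)=F(D_2)=(1,\cos\alpha,\cos\alpha)$ for every small enough $\alpha$. This works for an arbitrary base and avoids the appeal to Lemma~\ref{le.infin.1} and the ``direct computation'' you leave implicit; you could drop your isoceles example in favor of this one-line construction.
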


\begin{proof}
The first assertion follows from Lemma \ref{le.map.plane.1}.

Let us suppose that there are $D_1, D_2 \in \mathbb{B}_{-}$ such that $D_1 \neq D_2$ and $F(D_1)=F(D_2)$.
The segment $[D_1,D_2]$ cannot intersect all sides of the triangle $ABC$. Suppose that it does not intersect the side $BC$.
Then $[D_1,D_2]$ should intersect two other sides of the triangle $ABC$. Indeed, if it does not intersect, say the side $AB$,
then both points $D_1, D_2$ are on a circle $\mathcal{C}_1$ with the chord $AB$ as well as are on a circle $\mathcal{C}_2$ with the chord $BC$.
Hence, $D_1,D_2, B \in \mathcal{C}_1 \bigcap \mathcal{C}_2$ that impossible (an intersection of two circles has at most two points).
Now, we may suppose without loss of generality that $D_1$ and $C$ are in different half-planes with respect the straight line $AB$ and $[D_1,D_2]$ intersect the sides $AB$ and $AC$. Hence, we have a convex pentagon $AD_1BCD_2$, which implies $\angle AD_1C<\angle AD_1B$ and $\angle AD_2C > \angle AD_2B$.
But the condition $F(D_1)=F(D_2)$ implies $\angle AD_1C= \angle AD_2C$ and $\angle AD_1B=\angle AD_2B$, which is incompatible with the previous inequalities.
Hence, $F$ is injective on the set $\mathbb{B}_{-}$.

The fact that the map $F$ is not injective on the set $\mathbb{B}_{+}$, follows from the examples below, see e.g., \eqref{eq.noninj.plane.1} and Fig. \ref{break_plane}.
\end{proof}

\smallskip

Recall that the map $F$ is not defined at the points $A$, $B$, and $C$, moreover, there are no (direction-independent) limits for $F(D)$, when $D$ tends to one of these points,
see details in Section \ref{sec.limit.point}.

\medskip

It is easy to see that the straight lines $AB, AC$ and $BC$ divide the plane of $ABC$
into $7$ (closed) subsets ${\mathbb G_0}$, ${\mathbb G_1}$, ${\mathbb G_2}$, ${\mathbb G_3}$, ${\mathbb G_4}$,
${\mathbb G_5}$ and ${\mathbb G_6}$, see Fig. \ref{break_plane}.

\begin{figure}[t]
\begin{minipage}[h]{0.395\textwidth}
\center{\includegraphics[width=0.95\textwidth]{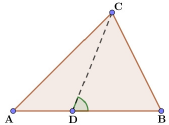}} \\ a)                     \\
\end{minipage}
\quad\quad\quad\quad
\begin{minipage}[h]{0.395\textwidth}
\center{\includegraphics[width=0.95\textwidth]{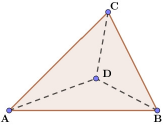}}   \\ b) \\
\end{minipage}
\caption{a) Point $D\in[A,B]$; b) Point $D$ lies inside $\triangle ABC$.}
\label{Base}
\end{figure}

\bigskip

At first, we consider the case where the point $D$ lies inside $[A,B]$, see
the left panel of Fig.~\ref{Base}. Since $\angle ADB = \angle ADC + \angle CDB=\pi$,
therefore, $\cos\angle ADB=-1$, $\cos \angle BDC =- \cos \angle ADC$.
Note also that $\angle CDB > \angle CAB$ and  $\angle CDA > \angle CBA$ that implies
$$
-\cos\angle CBA< -\cos\angle ADC=\cos\angle  BDC < \cos\angle CAB.
$$
Therefore, the image of the interval $(A,B)$ under the map $F$ is the interval between the points
$(\cos \angle CAB, -\cos \angle CAB,-1)$ and
$(-\cos \angle ABC, \cos \angle ABC,-1)$, which is
a subset of the interval $I_1$ with the endpoints $(-1,1,-1)$ и $(1,-1,-1)$.

The same idea implies that the image of the interval $(B,C)$ under the map $F$ \linebreak
is
the interval between the endpoints
$(-1,\cos \angle ACB, -\cos \angle ACB)$ and \linebreak
$(-1,-\cos \angle ABC, \cos \angle ABC)$, which is
a subset of the interval $I_2$ with the endpoints $(-1,-1,1)$ и $(-1,1,-1)$.

Finally, similar arguments show that the image of the interval $(A,C)$ under the map $F$ is the interval between the point
$(-\cos \angle ABC,-1, \cos \angle ABC)$ and the point \linebreak
$(\cos \angle CAB, -1, -\cos \angle CAB)$, which is
a subset of the interval $I_3$ with the endpoints $(1,-1,-1)$ и $(-1,-1,1)$.

\begin{remark}\label{re.edge.pc}
The intervals  $I_1$, $I_2$, and $I_3$ can be considered as some ``edges'' of the ``pillowcase'' $\mathbb{BP}$.
\end{remark}
\smallskip

Further, let us consider the case where the point $D$ lies in the interior the triangle $ABC$,
see the right panel of Fig.~\ref{Base}. It implies
$\angle ADB + \angle BDC + \angle CDA = 2\pi$ and
$$
\angle BDC > \angle BAC,\quad \angle CDA > \angle CBA,\quad \angle ADB > \angle ACB.
$$
Thus only quite special triples $(\cos \overline{\alpha}, \cos \overline{\beta},\cos \overline{\gamma})$ can be realized as the images of such points~$D$.
Indeed, $(\cos \overline{\alpha}, \cos \overline{\beta},\cos \overline{\gamma})$ lies in the intersection of the following three open half-spaces:
\begin{eqnarray*}
\cos \overline{\alpha} &=&\cos \angle BDC< \cos \angle BAC,\\
\cos \overline{\beta}&=&\cos \angle CDA < \cos \angle CBA, \\
\cos \overline{\gamma}&=&\cos \angle ADB< \cos \angle ACB.
\end{eqnarray*}

\begin{remark}
It is easy to see that for a given base $ABC$, not every point of $\mathbb{BP}$ is the image $F(D)$ for some $D \in \Pi^0$.
Indeed, if we take numbers $\overline{\alpha}, \overline{\beta}, \overline{\gamma} \in [0,\pi]$ with $\overline{\alpha}+\overline{\beta}+\overline{\gamma}=2\pi$,
then, as it is easy to check, $(\cos \overline{\alpha}, \cos \overline{\beta},\cos \overline{\gamma}) \in \mathbb{BP}$.
On the other hand, if we have $D \in \Pi^0$ such that  $(\angle BDC, \angle ADC,\angle ADB)=(\overline{\alpha}, \overline{\beta}, \overline{\gamma})$,
then $D$ should be situated in the triangle $ABC$. Therefore, $\overline{\alpha}=\angle BDC \geq \angle BAC$, $\overline{\beta}=\angle ADC \geq \angle ABC$, and $\overline{\gamma}=\angle ADB \geq \angle ACB$ by simple arguments. Consequently, we have the following necessary condition: If
$(\cos \overline{\alpha}, \cos \overline{\beta},\cos \overline{\gamma})\in F(\Pi^0\setminus\{A,B,C\})$ and $\overline{\alpha}+\overline{\beta}+\overline{\gamma}=2\pi$, then
$\overline{\alpha} \geq \angle BAC$, $\overline{\beta} \geq \angle ABC$, and $\overline{\gamma} \geq \angle ACB$.
A bit modified arguments show that not every point in the interior of $\mathbb{P}$ is the image of some point in $\Pi^+$ under the map $F$
(for a given base).
\end{remark}

\medskip
Recall that {\it an angle on the plane is an intersection of two closed half-spaces with respect two non-parallel straight lines}.
This is a convex set bounded by two rays issued from one point.
\smallskip

Let $\mathbb{G}_1$ be the set that is the intersection of the angular domain $CAB$ with
a half-plane relative to $[C,B]$, not containing point $A$,
and the set $\mathbb{G}_2$ is opposite to the angular domain $CAB$, see Fig. \ref{break_pl}.
\smallskip

If $D_1, D_2\in BC\backslash [B,C]$ such that the interval $[B,C]$ lies between the points $D_1$ and $D_2$ and $\angle AD_1D_2=\angle AD_2D_1=:\alpha$, then
$\angle BD_1A=\angle CD_1A=\angle CD_2A=\angle BD_2A=\alpha$.
It implies
\begin{eqnarray*}
F(D_1)=\left(\cos \angle BD_1C,\cos \angle CD_1A,\cos\angle AD_1B,\right)=
\left(1,\cos\alpha,\cos\alpha\right)\\
=\left(\cos \angle BD_2C,\cos \angle CD_2A,\cos\angle AD_2B\right)=F(D_2).
\end{eqnarray*}
It is clear that we can apply this construction for any
$\alpha\in\left(0,\min\{\angle ACB,\angle ABC\}\right)$.
For example, if the triangle $ABC$ is regular, then
$\alpha\in(0,\pi/3)$ and $\cos \alpha\in(1/2,1)$.

Reasoning similarly, we obtain also that $F(D_3)=F(D_4)$ and $F(D_5)=F(D_6)$ for
$\beta\in\left(0,\min\{\angle BAC,\angle BCA\}\right)$ and
$\gamma \in\left(0,\min\{\angle CAB,\angle CBA\}\right)$, see Fig.~\ref{break_plane}.
\smallskip

\begin{figure}[t]
\center{\includegraphics[width=0.6\textwidth]{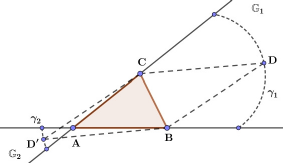}}\\
\caption{Intersection of the toroid $\mathbb{T}_{CB}^{\alpha}$ with the plane $ABC$}
\label{break_pl}
\end{figure}

For a fixed $\alpha\in(0,\pi)$, let us consider $T_{CB}^{\alpha}=\{D\in \Pi^0\,|\, \angle CDB=\alpha\}$,
the intersection of the toroid $\mathbb{T}_{CB}^{\alpha}$ with the plane $ABC$.
Then we define the following curves:
$$
\gamma_1:=T_{CB}^{\alpha}\cap\mathbb{G}_1,\quad   \gamma_2:=T_{CB}^{\alpha}\cap\mathbb{G}_2.
$$
It is easy to check the following observation: if $\alpha < \min\{\angle ACB,\angle ABC\}$,
then the curve $\gamma_1$ has its endpoints on the ray $AC$ and on the ray $AB$;
$\angle CDA$ can take any value in $[0,\alpha]$ for points $D\in\gamma_1$; and $\angle BDA = \alpha-\angle CDA$,  see Fig.~\ref{break_pl}.
Therefore, any triple
$(\alpha,\alpha_1, \alpha_2)$ such that $\alpha_1\geq 0$, $\alpha_2\geq 0$, and $\alpha=\alpha_1 + \alpha_2$,
can be realized as
$\left(\angle BDC,\angle CDA,\angle BDA\right)$ for some $D\in\gamma_1$.

It is easy to see that for each $\alpha\geq\angle BAC$ the curve $\gamma_2$ is an empty set.
However, if $\alpha<\angle BAC$ then $\gamma_2$ is a non-empty curve and for any point $D'\in \gamma_2$ we have
$F(D')\in F(\gamma_1)$, since the triple $(\alpha=\angle CD'B,\angle CD'A,\angle BD'A)$ can be realized as above.
It should be noted that this inclusion is not necessarily true if $\min\{\angle ACB,\angle ABC\}\leq \alpha<\angle BAC$ (in this case $\gamma_1$ has a non-trivial intersection with $\triangle ABC$).

Now, let us consider the following six sets:
\begin{eqnarray*}\label{eq.plane.ss1}
{\mathbb G_1^0\,(\mathbb G_2^0)}&:=&\{D\in \mathbb G_1\,(\mathbb G_2)\,|\, \angle BDC \in \left(0,\min\{\angle ACB,\angle ABC\}\right)\},\\
{\mathbb G_3^0\,(\mathbb G_4^0)}&:=&\{D\in \mathbb G_3\,(\mathbb G_4)\,|\, \angle ADC \in \left(0,\min\{\angle BAC,\angle BCA\}\right)\},\\
{\mathbb G_5^0\,(\mathbb G_6^0)}&:=&\{D\in \mathbb G_5\,(\mathbb G_6)\,|\, \angle ADB \in \left(0,\min\{\angle CAB,\angle CBA\}\right)\}.
\end{eqnarray*}

The above arguments, applied to all three sides of the base $ABC$, imply the following inclusions:
\begin{equation}\label{eq.noninj.plane.1}
F({\mathbb G_2^0})\subset F({\mathbb G_1^0}), \quad  F({\mathbb G_4^0})\subset F({\mathbb G_3^0}), \quad F({\mathbb G_6^0})\subset F({\mathbb G_5^0}).
\end{equation}

\begin{remark}\label{re.base.reg.1}
This observation implies in particular, that
it is enough to study the images $F({\mathbb G_1})$, $F({\mathbb G_3})$, and $F({\mathbb G_5})$ in the case of regular (equilateral) base $ABC$.
\end{remark}

If we consider the plane $ABC$ together with the infinite point $\infty$, then we get $F(\infty)=(1,1,1)$ by Lemma \ref{le.infin.1}.

The rays along the sidelines issued from the infinite point $\infty$ and passing through the points  $D_i$, $1\leq i \leq 6$, are glued as follows (see Fig.~\ref{break_plane}):
$$
[\infty,D_1)\leftrightarrow[\infty,D_2),\quad [\infty,D_3)\leftrightarrow[\infty,D_4), \quad [\infty,D_5)\leftrightarrow[\infty,D_6).
$$

Nevertheless, the following result holds.

\begin{prop}\label{prop7}
The map $F$ is injective in the interior of any of the angular domains $ABC$, $BCA$, $CAB$.
\end{prop}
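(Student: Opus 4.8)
The plan is to treat the three angles on an equal footing and argue for one of them, say the angle $CAB$ with vertex $A$; the other two will then follow by the cyclic relabeling $A\to B\to C\to A$. Write $W$ for the interior of this angle. It is an open convex sector bounded by the two lines $AB$ and $AC$, and it lies strictly on one side of the line $AB$ and strictly on one side of the line $AC$. First I would assume, towards a contradiction, that there exist two distinct points $D_1,D_2\in W$ with $F(D_1)=F(D_2)$, and exploit this equality one coordinate at a time.

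Because $W$ is convex and lies strictly on one side of each of the lines $AB$ and $AC$, both $D_1$ and $D_2$ lie on the same side of $AB$ and on the same side of $AC$. From $F(D_1)=F(D_2)$ I read off, in particular, $\cos\angle AD_1B=\cos\angle AD_2B$ and $\cos\angle AD_1C=\cos\angle AD_2C$; since all these angles lie in $(0,\pi)$, this gives $\angle AD_1B=\angle AD_2B$ and $\angle AD_1C=\angle AD_2C$, and the common values are genuinely in $(0,\pi)$ (the value $\pi$ is impossible on $W$, as it would force $D_i$ onto the segment $AB$ or $AC$). The locus of points seeing a fixed segment under a fixed angle, on a fixed side of the line through it, is a single circular arc. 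Hence $D_1,D_2$ lie on one circle $\mathcal C_1$ through $A$ and $B$, and on one circle $\mathcal C_2$ through $A$ and $C$.

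Now $A,D_1,D_2$ are three pairwise distinct points common to $\mathcal C_1$ and $\mathcal C_2$ (here $A\notin W$, being the apex, and $D_1\neq D_2$ by assumption). Two distinct circles meet in at most two points, so the three common points force $\mathcal C_1=\mathcal C_2$; this common circle passes through $A$, $B$, $C$ and is therefore the circumscribed circle $S$ of the triangle $ABC$, whence $D_1,D_2\in S$. The crux of the argument, and the step I expect to be the main obstacle, is to exclude exactly this possibility, i.e. to show that the interior $W$ of the angle under consideration does not meet $S$ (equivalently, that $W$ and $S\setminus\{A\}$ lie on opposite sides of the tangent to $S$ at $A$). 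The natural tool is the tangent–chord (inscribed) angle: the tangent to $S$ at $A$ makes the angles $\angle ACB$ and $\angle ABC$ with the chords $AB$ and $AC$, which pins down its position relative to $W$, while $S\setminus\{A\}$ lies entirely on one side of it. This is a delicate point precisely because, by Lemma~\ref{le.map.plane.1}, $F$ is constant on each arc of $S$, so the degenerate case $D_1,D_2\in S$ is exactly where injectivity is genuinely at risk; verifying $W\cap S=\varnothing$ is thus essential and cannot be omitted. Once it is in place, the contradiction $D_1,D_2\in S$ against $D_1,D_2\in W$ finishes the argument, and the remaining two angles are handled identically by symmetry. As a fallback, one could instead mimic the segment‑crossing argument of Proposition~\ref{pr.map.plane.1}, noting that any segment joining two points of $W$ crosses neither $AB$ nor $AC$, but the two‑circle argument above is shorter.
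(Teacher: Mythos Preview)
Your strategy coincides with the paper's: take the two base sides through the apex of the angle, use the inscribed--angle locus to place $D_1,D_2$ on two circles through that apex, and conclude via ``two distinct circles meet in at most two points''. The paper's proof (written for the angle at $B$) stops at that sentence; you go further and correctly isolate the degenerate possibility $\mathcal C_1=\mathcal C_2$, which forces $\mathcal C_1=\mathcal C_2=S$, as the only remaining obstacle.

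Your resolution of that case, however, is wrong. The claim that the interior $W$ of the angle at $A$ misses $S$ is false: the whole open arc $BC$ of $S$ not containing $A$ lies inside $W$. For the equilateral base $A=(1,0)$, $B=\bigl(-\frac{1}{2},\frac{\sqrt3}{2}\bigr)$, $C=\bigl(-\frac{1}{2},-\frac{\sqrt3}{2}\bigr)$ on the unit circle, the interior of the angle $CAB$ is $\{x+\sqrt3\,y<1\}\cap\{x-\sqrt3\,y<1\}$, and the point $(-1,0)\in S$ satisfies both strict inequalities. The tangent--chord idea cannot rescue this: the tangent to $S$ at $A$ has $W$ and $S\setminus\{A\}$ on the \emph{same} side, not opposite sides. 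Worse, the gap is not a technicality one can patch: by Lemma~\ref{le.map.plane.1} the map $F$ is constant (equal to $\widetilde A$) on that open arc $BC\subset W$, so any two distinct points of that arc already give $F(D_1)=F(D_2)$ with $D_1,D_2\in W$. Your instinct that the case $D_1,D_2\in S$ ``is essential and cannot be omitted'' was exactly right; unfortunately the assertion $W\cap S=\varnothing$ that you need is false, and the two--circle argument---in your version and in the paper's alike---only establishes injectivity of $F$ on the interior of the angle with the circumscribed circle $S$ removed.
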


\begin{proof}
It suffices to consider only the angular domain $ABC$ ($\mathbb{G}_0 \bigcup \mathbb{G}_3$). Let us suppose that there are
$D_1 \neq D_2$ in this  angular domain such that $F(D_1)=F(D_2)$. Then $D_1$ and $D_2$ lie on a circle $C_1$ with chord $[AB]$ ($\angle BD_1A =\angle BD_2A$).
By similar reasons $D_1$ and $D_2$ lie on a circle $C_2$ with chord $[BC]$. Therefore, $D_1,D_2 \in C_1 \bigcap C_2$. On the other hand, $B \in  C_1 \bigcap C_2$.
Since the intersection of two distinct circles has at most two points, we get the proposition.
\end{proof}

\smallskip

The following result shows that the case of obtuse-angled triangle $ABC$ is very special.

\begin{prop}\label{pr.tops.inter}
The condition $F(\mathbb{B}_{-}) \bigcap F(\mathbb{B}_{+})\neq \emptyset$ holds if and only if the triangle $ABC$ is obtuse-angled.
\end{prop}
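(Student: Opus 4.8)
The plan is to read off the image of each planar region directly from the four angle relations available to a coplanar point. First I would recall that the lines $AB$, $BC$, $CA$ cut $\Pi^0$ into the open triangle and six exterior angular regions, and that on each of them the triple $(\overline{\alpha},\overline{\beta},\overline{\gamma})=(\angle BDC,\angle CDA,\angle ADB)$ satisfies exactly one of $\overline{\alpha}+\overline{\beta}+\overline{\gamma}=2\pi$, $\overline{\alpha}=\overline{\beta}+\overline{\gamma}$, $\overline{\beta}=\overline{\alpha}+\overline{\gamma}$, $\overline{\gamma}=\overline{\alpha}+\overline{\beta}$ (these are the cases in the Remark after Lemma~\ref{le.inpl.1}), so that $F$ sends each region into the corresponding ``face'' of $\mathbb{BP}$. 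The sum relation occurs only on the interior of $\triangle ABC$, which sits inside $\mathbb{B}_{-}$. The relation $\overline{\alpha}=\overline{\beta}+\overline{\gamma}$ holds exactly on the region across $BC$ from $A$ and on the region $\mathbb{G}_2$ vertically opposite the angle at $A$ (and cyclically for the other two). A point of $\mathbb{B}_{-}$ with $\overline{\alpha}=\overline{\beta}+\overline{\gamma}$ must lie in the circular segment cut off by $BC$, where $\overline{\alpha}=\angle BDC>\pi-\angle BAC$; the across-edge part of $\mathbb{B}_{+}$ has $\overline{\alpha}<\pi-\angle BAC$, while $\mathbb{G}_2$ lies entirely in $\mathbb{B}_{+}$ (it is contained in the open half-plane of the tangent to the circumscribed circle at $A$ not meeting the disc) and there $\overline{\alpha}=\angle BDC\in(0,\angle BAC)$, the bounds being the limits as $D\to A$ and $D\to\infty$.

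For the implication ``obtuse $\Rightarrow$ $F(\mathbb{B}_{-})\cap F(\mathbb{B}_{+})\neq\emptyset$'' I would assume $\angle BAC>\pi/2$, so that $(\pi-\angle BAC,\angle BAC)$ is nonempty, and fix $\alpha$ in it. On the arc of $T_{CB}^{\alpha}$ lying across $BC$ from $A$, the inequality $\alpha>\pi-\angle BAC$ places the whole open arc into $\mathbb{B}_{-}$, and along it $\overline{\beta}=\angle CDA$ runs continuously between the limiting values $\angle ABC$ (as $D\to B$) and $\alpha-\angle ACB$ (as $D\to C$); by the intermediate value theorem every $\overline{\beta}\in(\angle ABC,\alpha-\angle ACB)$ is attained. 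On the near-side arc of $T_{CB}^{\alpha}$, the inequality $\alpha<\angle BAC$ pushes the arc beyond $A$ into $\mathbb{G}_2$, yielding the curve $\gamma_2\subset\mathbb{B}_{+}$ on which $\overline{\beta}$ sweeps $(0,\alpha)$ (its endpoints lie on the rays opposite $AB$ and $AC$, where $\overline{\beta}=\alpha$ and $\overline{\beta}=0$). Since $(\angle ABC,\alpha-\angle ACB)\subset(0,\alpha)$ is nonempty, its length being $\alpha-(\pi-\angle BAC)>0$, any common value of $\overline{\beta}$ produces $D_{-}\in\mathbb{B}_{-}$ and $D_{+}\in\mathbb{B}_{+}$ with identical triples $(\overline{\alpha},\overline{\beta},\overline{\gamma})$, hence $F(D_{-})=F(D_{+})$.

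For the converse I would suppose $F(D_{-})=F(D_{+})$ with $D_{-}\in\mathbb{B}_{-}$, $D_{+}\in\mathbb{B}_{+}$. As the angles lie in $(0,\pi)$ and the three cosines agree, both points realize the same triple, hence the same face of $\mathbb{BP}$. If some coordinate of the common value equals $\pm1$, then one angle is $0$ or $\pi$: an angle $\pi$ forces $D$ onto an open side of $\triangle ABC$ (a chord, so in $\mathbb{B}_{-}$), an angle $0$ forces $D$ onto a line outside a side (in $\mathbb{B}_{+}$); in either case the preimages all lie on one side and $D_{-},D_{+}$ cannot both map there. Otherwise the face is unique. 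The sum relation occurs only in the triangle interior $\subset\mathbb{B}_{-}$, so it is incompatible with $D_{+}\in\mathbb{B}_{+}$. If the relation is $\overline{\alpha}=\overline{\beta}+\overline{\gamma}$, then $D_{-}$ lies in the $BC$-segment with $\overline{\alpha}>\pi-\angle BAC$, whereas $D_{+}$ lies either in the across-edge part of $\mathbb{B}_{+}$ (where $\overline{\alpha}<\pi-\angle BAC$, impossible) or in $\mathbb{G}_2$ (where $\overline{\alpha}<\angle BAC$); the latter yields $\pi-\angle BAC<\overline{\alpha}<\angle BAC$, forcing $\angle BAC>\pi/2$. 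The relations $\overline{\beta}=\overline{\alpha}+\overline{\gamma}$ and $\overline{\gamma}=\overline{\alpha}+\overline{\beta}$ are handled by the cyclic permutation and give $\angle ABC>\pi/2$ and $\angle ACB>\pi/2$. Thus a nonempty intersection forces one angle to exceed $\pi/2$.

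I expect the main obstacle to be the precise justification of the two $\overline{\beta}$-ranges along the arcs of $T_{CB}^{\alpha}$: namely, that the far arc really lies in $\mathbb{B}_{-}$ with endpoint limits $\angle ABC$ and $\alpha-\angle ACB$, and that the near arc crosses both boundary rays of $\mathbb{G}_2$ so that $\gamma_2$ sweeps all of $(0,\alpha)$. These are elementary once one tracks the limits $D\to B,C$ and $D\to A,\infty$ carefully and invokes the convexity argument placing $\mathbb{G}_2$ outside the circumscribed circle; the bookkeeping attaching the four angle relations to the seven planar regions and to the faces and edges of $\mathbb{BP}$ is then routine.
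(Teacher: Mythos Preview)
Your argument is correct and follows essentially the same route as the paper's proof: both directions hinge on the inscribed-angle bounds for $\angle BDC$ on the two arcs of $T_{CB}^{\alpha}$ (or, in the paper's labeling, $T_{AB}^{\overline{\gamma}}$), together with an intermediate-value sweep to produce matching triples on an inner arc and on the vertex-opposite region, and the converse reduces to the chain $\pi-\angle BAC<\overline{\alpha}<\angle BAC$ forcing $\angle BAC>\pi/2$.

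The only organizational difference is that you make the bookkeeping explicit up front---attaching each of the four linear relations among $(\overline{\alpha},\overline{\beta},\overline{\gamma})$ to its pair of planar regions and observing $\mathbb{G}_2\subset\mathbb{B}_+$ via the tangent line at $A$---whereas the paper argues the converse more ad hoc, locating $D_2$ relative to the angle at $C$ by excluding positions one at a time. Your framing has the advantage that the case split in the converse becomes mechanical (each face relation is handled by the same two-subcase check), at the cost of having to verify the region-to-relation table and the tangent-line claim; the paper's version is shorter but relies on the reader tracking the geometry in Fig.~\ref{plane_tor1}. Either way the content is the same, and the endpoint limits you flag as the ``main obstacle'' are exactly the limits the paper uses (there written as $D\to B$, $D\to C$ along the arc and as the $(\overline{\gamma},0)$, $(0,\overline{\gamma})$ extremes on the outer piece).
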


 \begin{figure}[t]
\center{\includegraphics[width=0.4\textwidth]{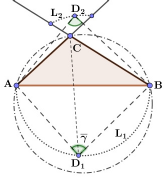}}\\
\caption{The intersection of $F(\mathbb{B}_{-})$ with $F(\mathbb{B}_{+})$}
\label{plane_tor1}
\end{figure}

\begin{proof} Let us suppose that
$$
F(D_1)=(\angle BD_1C, \angle AD_1C, \angle AD_1B)=(\angle BD_2C, \angle AD_2C, \angle AD_2B)=F(D_2)
$$
for some $D_1\in \mathbb{B}_{-}$ and $D_2\in \mathbb{B}_{+}$.
We can assume without loss of generality that
$\angle AD_2B=\angle AD_2C+ \angle CD_2B$, which is equivalent to $\angle AD_1B=\angle AD_1C+ \angle CD_1B$, see Fig. \ref{plane_tor1}.
Therefore, $D_1, D_2 \in {\mathbb G_5}\bigcup {\mathbb G_6}$, moreover,  $D_1 \in {\mathbb G_5} $ and by Proposition \ref{prop7},
$D_2 \in {\mathbb G_6}$, see Fig. \ref{break_plane}.
Consequently, $D_2$ is in the angular domain that is opposite to the angular domain $ACB$.
This implies $\angle AD_2B< \angle ACB$, hence,  $\angle ACB>\angle AD_2B=\angle AD_1B > \pi- \angle ACB$, that implies $\angle ACB> \pi/2$.

Now, we suppose that $\angle ACB> \pi/2$. We fix $\overline{\gamma}\in (\pi-\angle ACB=\angle CAB +\angle CBA, \angle ACB)$ and
$\overline{\alpha}\in (\angle CAB, \overline{\gamma}-\angle CBA)$,
then  $\overline{\beta}:=\overline{\gamma}-\overline{\alpha}\in (\angle CBA, \overline{\gamma}-\angle CAB)$.

Let us consider $T_{AB}^{\overline{\gamma}}=\{D\in \Pi^0\,|\, \angle ADB=\overline{\gamma}\}$,
the intersection of the toroid $\mathbb{T}_{AB}^{\overline{\gamma}}$ with the plane $\Pi^0$.
$T_{AB}^{\overline{\gamma}}$ contains two circular arcs $L_1$ and $L_2$,
where $L_1\setminus \{A,B\} \subset \mathbb{B}_{-}$ and  $L_2$ is a part of $T_{AB}^{\overline{\gamma}}$ that is contained in the
angular domain $ACB$ on the plane $\Pi^0$.
For points $D_1 \in L_1$ the pair $(\angle AD_1C, \angle CD_1B)$, where  $\angle AD_1C+\angle CD_1B=\overline{\gamma}$, takes all values between $(\overline{\gamma}-\angle CAB,\angle CAB)$ and $(\angle CBA,\overline{\gamma}-\angle CBA)$. Therefore, there is $D_1 \in L_1$ such that
$(\angle AD_1C, \angle CD_1B)=(\overline{\beta},\overline{\alpha})$.
Similarly, for points $D_2 \in L_2$ the pair $(\angle AD_2C, \angle CD_2B)$, where  $\angle AD_2C+\angle CD_2B=\overline{\gamma}$, takes all values
of the type $(\overline{\gamma}-\theta,\theta)$, where  $\theta\in [0,\overline{\gamma}]$. Therefore, there is $D_2 \in L_2$ such that
$(\angle AD_2C, \angle CD_2B)=(\overline{\beta},\overline{\alpha})$.
Therefore, we have found  $D_1\in \mathbb{B}_{-}$ and $D_2\in \mathbb{B}_{+}$, such that $F(D_1)=F(D_2)$. The proposition is proved.
\end{proof}

\begin{remark}
We see from the above proof that for any point $(\cos \overline{\alpha}, \cos \overline{\beta}, \cos \overline{\gamma})\in \mathbb{BP}$,
where $\overline{\gamma}\in  (\pi-\angle ACB, \angle ACB)$, $\overline{\alpha}\in (\angle CAB, \overline{\gamma}-\angle CBA)$, and $\overline{\beta}=\overline{\gamma}-\overline{\alpha}\in (\angle CBA, \overline{\gamma}-\angle CAB)$ (for the case when $\angle ACB> \pi/2$),
has one pre-image in  $\mathbb{B}_{-}$ and at least one pre-image in $\mathbb{B}_{+}$.
\end{remark}

The final result in this section is the following helpful proposition.

\begin{prop}\label{pr.top.cap}
Let us consider $\mathfrak{m}:=\max \{\cos \angle BAC, \cos \angle ABC, \cos \angle BCA \}$ for a given base $ABC$.
If $(1,1,1)\neq (\tilde{x},\tilde{y},\tilde{z})\in \mathbb{BP}$ and the inequality $\min \{\tilde{x},\tilde{y},\tilde{z}\} > \mathfrak{m}$ holds, then
there exists $D\in \Pi^0 \setminus \{A,B,C\}$ such that $F(D)=(\tilde{x},\tilde{y},\tilde{z})$.
\end{prop}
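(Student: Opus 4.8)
The plan is to translate the target point into angles and then to realise those angles at a planar point $D$ by the toroid–arc construction that earlier produced the inclusions \eqref{eq.noninj.plane.1}. Set $\overline{\alpha}:=\arccos\tilde{x}$, $\overline{\beta}:=\arccos\tilde{y}$, $\overline{\gamma}:=\arccos\tilde{z}$, all lying in $[0,\pi]$; finding the required $D$ then amounts to exhibiting a point with $\angle BDC=\overline{\alpha}$, $\angle ADC=\overline{\beta}$, $\angle ADB=\overline{\gamma}$. First I would unwind the hypothesis. Since the cosine is decreasing on $[0,\pi]$, we have $\mathfrak{m}=\cos\bigl(\min\{\angle BAC,\angle ABC,\angle BCA\}\bigr)$, so the assumption $\min\{\tilde{x},\tilde{y},\tilde{z}\}>\mathfrak{m}$ says precisely that each of $\overline{\alpha},\overline{\beta},\overline{\gamma}$ is strictly smaller than every angle of $ABC$. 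As the least angle of a triangle is at most $\pi/3$, this yields $\overline{\alpha}+\overline{\beta}+\overline{\gamma}<3\min\{\angle BAC,\angle ABC,\angle BCA\}\le\pi$.

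Next I would determine which relation holds among $\overline{\alpha},\overline{\beta},\overline{\gamma}$. Because $(\tilde{x},\tilde{y},\tilde{z})\in\mathbb{BP}$, the defining equation factors as $(\tilde{x}-\tilde{y}\tilde{z})^2=(1-\tilde{y}^2)(1-\tilde{z}^2)$, so $\cos\overline{\alpha}=\cos(\overline{\beta}\pm\overline{\gamma})$; arguing as in the Remark after Lemma \ref{le.inpl.1}, this forces exactly one of the four coplanarity relations $\overline{\alpha}=\overline{\beta}+\overline{\gamma}$, $\overline{\beta}=\overline{\alpha}+\overline{\gamma}$, $\overline{\gamma}=\overline{\alpha}+\overline{\beta}$, $\overline{\alpha}+\overline{\beta}+\overline{\gamma}=2\pi$. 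The sum bound just obtained discards the last one. Since the assertion, the set $\mathbb{BP}$, and the number $\mathfrak{m}$ are all invariant under a simultaneous permutation of the coordinates and of the vertices $A,B,C$, I may assume $\overline{\alpha}=\overline{\beta}+\overline{\gamma}$; note also $\overline{\alpha}>0$, since otherwise $(\tilde{x},\tilde{y},\tilde{z})=(1,1,1)$, which is excluded by hypothesis.

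Finally I would invoke the construction preceding \eqref{eq.noninj.plane.1} with $\alpha=\overline{\alpha}$. The inequality $\alpha<\min\{\angle ACB,\angle ABC\}$ required there holds by the first step, so the arc $\gamma_1=T_{CB}^{\alpha}\cap\mathbb{G}_1$ joins the ray $AC$ to the ray $AB$, and as $D$ traverses $\gamma_1$ the angle $\angle CDA$ attains every value in $[0,\alpha]$ with $\angle BDA=\alpha-\angle CDA$. As $\overline{\beta}\in[0,\overline{\alpha}]$, I would choose $D\in\gamma_1$ with $\angle CDA=\overline{\beta}$; then $\angle BDA=\overline{\alpha}-\overline{\beta}=\overline{\gamma}$ and $\angle BDC=\overline{\alpha}$, whence $F(D)=(\cos\overline{\alpha},\cos\overline{\beta},\cos\overline{\gamma})=(\tilde{x},\tilde{y},\tilde{z})$. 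Such a $D$ lies strictly beyond the line $BC$, so $D\neq A$, and $D\notin\{B,C\}$ because there $\angle CDB=\alpha\in(0,\pi)$ is well defined; hence $D\in\Pi^0\setminus\{A,B,C\}$, as required.

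The main obstacle, and the only genuinely delicate point, is making the symmetry reduction rigorous and confirming that we always fall into a sum–relation whose associated wedge construction actually applies. The strong hypothesis that each of $\overline{\alpha},\overline{\beta},\overline{\gamma}$ lies below the least angle of $ABC$ is exactly what guarantees the inequality $\alpha<\min\{\angle ACB,\angle ABC\}$ demanded by the $\gamma_1$–argument, and it is this uniformity across all three cases that must be tracked carefully through the permutation; once it is in place, the intermediate–value behaviour of $\angle CDA$ along $\gamma_1$ closes the argument.
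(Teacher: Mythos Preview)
Your proof is correct and follows essentially the same strategy as the paper: reduce the $\mathbb{BP}$ condition to a sum relation among the three angles, then realise that relation along a toroidal arc in the appropriate wedge via the intermediate value theorem. The only cosmetic differences are that the paper orders the coordinates explicitly and invokes Lemma~\ref{le.param.pilc} to obtain $\overline{\alpha}+\overline{\beta}=\overline{\gamma}$, whereas you factor the $\mathbb{BP}$ equation directly and appeal to the permutation symmetry; and the paper builds the arc ad hoc in the wedge opposite $C$, while you more economically reuse the $\gamma_1$ construction already set up before~\eqref{eq.noninj.plane.1}.
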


 \begin{figure}[t]
\center{\includegraphics[width=0.25\textwidth]{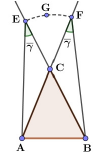}}\\
\caption{Intersection of the toroid $\mathbb{T}_{AB}^{\,\overline{\gamma}}$ with a special angular domain on the plane $ABC$}
\label{plane_tor}
\end{figure}

\begin{proof}
It is clear that $\mathfrak{m}\geq 1/2=\cos (\pi/3)$. Without loss of generality we assume that $\tilde{x} \geq \tilde{y} \geq \tilde{z} >\mathfrak{m} \geq 1/2$.  Let us consider the numbers
$\overline{\alpha}:=\arccos (\tilde{x})$, $\overline{\beta}:=\arccos (\tilde{y})$, and $\overline{\gamma}:=\arccos (\tilde{z})$.
Obviously, $0\leq \overline{\alpha} \leq \overline{\beta} \leq \overline{\gamma} < \pi/3$.
Using Lemma \ref{le.param.pilc}, we can suppose that
$$
\tilde{x}=\cos(\phi - \psi)\geq \tilde{y}=\cos (\psi) \geq \tilde{z}=\cos (\phi)>1/2
$$
for some $\psi,\phi \in \mathbb{R}$. Without loss of generality, we assume that $\phi \in (0,\pi/3)$. Then $|\phi-\psi|\leq |\psi|\leq \phi <\pi/3$.
This implies that $\psi >0$. Indeed, if $\psi \leq 0$ then $|\phi-\psi|=\phi-\psi \geq \phi$, therefore, $|\phi-\psi|=|\psi|=\phi$ and $\psi=0$ that imply $\phi=0$. Now, $\psi >0$ implies $\psi \leq \phi$ and $\phi-\psi\geq 0$. Therefore,
$$
\overline{\alpha}+\overline{\beta}=\arccos(\tilde{x})+\arccos( \tilde{y})=(\phi-\psi)+\psi=\phi=\arccos (\tilde{z})=\overline{\gamma}.
$$

Now, let us find a point $D\in \Pi^0 \setminus \{A,B,C\}$ such that
$$
F(D)=(\cos \angle BDC, \cos \angle ADC, \cos \angle BDA)=(\tilde{x},\tilde{y},\tilde{z}).
$$

On the plane $\Pi^0$, we consider the angular domain, defined by the rays $AC$ and $BC$ and its intersection with the toroid
$\mathbb{T}_{AB}^{\,\overline{\gamma}}$. It is clear that this intersection is an arc $L$ of a suitable circle (with the chord $AB$) between
some point $E$ on the ray $BC$ and some point $F$ on the ray $AC$ (recall that $\angle BCA>\arccos(\mathfrak{m}) >\overline{\gamma}$).
By definition of the above toroid, $\angle AGB =\overline{\gamma}$ for any
point $G$ of $L$. It is clear also that $\angle AGC+ \angle CGB=\angle AGC=\overline{\gamma}$.
Let us consider the values $(\angle AGC, \angle CGB)$ for various points on the arc $L$.
We have  $(\angle AEC, \angle CEB)=(\overline{\gamma},0)$ and
$(\angle AFC, \angle CFB)=(0,\overline{\gamma})$. It is clear that for some point $G$ on the arc $L$ between $E$ and $F$ we get
$(\angle AGC, \angle CGB)=(\overline{\beta},\overline{\alpha})$, see Fig.~\ref{plane_tor}. Hence, we can take this point $G$ as $D$. The proposition is proved.
\end{proof}

\section{On the limit points of the image $F(\GT)$} \label{sec.limit.point}

Now, we discuss the limit points of the images $F(D)$, when $D$ tends to one of the base vertices $A$, $B$, or $C$.
We will use some Cartesian coordinate system in $\mathbb{R}^3$ such that  $A, B, C\in \mathbb{R}^2=\{(p,q,0)\in \mathbb{R}^3\}$.
We consider the vectors $\tau_i$ and real numbers $\varphi_i$, $i=1,2,3$, where
$\tau_1=\frac{\overrightarrow{AB}}{\|\overrightarrow{AB}\|}=(\cos \varphi_1, \sin \varphi_1)$,
$\tau_2=\frac{\overrightarrow{BC}}{\|\overrightarrow{BC}\|}=(\cos \varphi_2, \sin \varphi_2)$,
$\tau_3=\frac{\overrightarrow{CA}}{\|\overrightarrow{CA}\|}=(\cos \varphi_3, \sin \varphi_3)$.
It is clear that $\cos \angle BAC=-(\tau_1,\tau_3)=-\cos(\varphi_1-\varphi_3)$, $\cos \angle ABC=-(\tau_1,\tau_2)=-\cos(\varphi_2-\varphi_1)$, and
$\cos \angle ACB=-(\tau_2,\tau_3)=-\cos(\varphi_3-\varphi_2)$.

The third coordinate of the point $D=(p,q,r)$ is assumed to be non-negative.
Consider the case where $D$ tends to $B$  (the cases with $A$ and $C$ are treated similarly).

For a given nontrivial vector $\eta \in \mathbb{R}^3$, we consider the points $D=B+t\cdot \eta$,  $t\in \mathbb{R}$.
If the third coordinate of $\eta$ is not $0$, then we consider $t$ only with the same sign as this coordinate has.
It is easy to check that
\begin{eqnarray*}
\cos \overline{\alpha}&=&\frac{\left(\overrightarrow{DB},\overrightarrow{DC}\right)}{\|\overrightarrow{DB}\|\cdot\|\overrightarrow{DC}\|}=
\frac{\left(-t\eta,\overrightarrow{BC}-t\eta\right)}{\|t\eta\|\cdot\|\overrightarrow{BC}-t\eta\|}\rightarrow
-\left(\tau_2, \frac{\eta}{\|\eta\|}\right),\\
\cos \overline{\beta}&=&\frac{\left(\overrightarrow{DA},\overrightarrow{DC}\right)}{\|\overrightarrow{DA}\|\cdot\|\overrightarrow{DC}\|}\rightarrow
\frac{(\overrightarrow{BA},\overrightarrow{BC})}{\|\overrightarrow{BA}\|\cdot\|\overrightarrow{BC}\|}=-(\tau_1,\tau_2)=\cos \angle ABC,\\
\cos
\overline{\gamma}&=&\frac{\left(\overrightarrow{DA},\overrightarrow{DB}\right)}{\|\overrightarrow{DA}\|\cdot \|\overrightarrow{DB}\|}=
\frac{\left(\overrightarrow{BA}-t\eta,-t\eta\right)}{\|\overrightarrow{BA}-t\eta\|\cdot\|t\eta\|}\rightarrow
\left(\tau_1, \frac{\eta}{\|\eta\|}\right),
\end{eqnarray*}
when $t \to 0$.

Now, we consider a sequence of points $D_n$ converging to the point $B$, when  $n \to \infty$. Then (passing to a subsequence, if necessary) we can assume that
$$
\frac{\overrightarrow{BD_n}}{\|\overrightarrow{BD_n}\|}\rightarrow\eta \in S^2=\{(p,q,r)\in\mathbb{R}^3\,|\,p^2+q^2+r^2=1\}.
$$
It is easy to check that

\begin{eqnarray*}
\cos \overline{\alpha}&=&
\left(\frac{\overrightarrow{D_nB}}{\|\overrightarrow{D_nB}\|},\frac{\overrightarrow{D_nC}}{\|\overrightarrow{D_nC}\|}\right)=
\left(-\eta, \frac{\overrightarrow{BC}}{\|\overrightarrow{BC}\|}\right)\rightarrow
-\left(\tau_2, \eta\right),\\
\cos \overline{\beta}&=&
\left(\frac{\overrightarrow{D_nA}}{\|\overrightarrow{D_nA}\|},\frac{\overrightarrow{D_nC}}{\|\overrightarrow{D_nC}\|}\right)\rightarrow
\left(\frac{\overrightarrow{BA}}{\|\overrightarrow{BA}\|},\frac{\overrightarrow{BC}}{\|\overrightarrow{BC}\|}\right)=
-\left(\tau_1, \tau_2\right)=\cos \angle ABC,\\
\cos \overline{\gamma}&=&\left(\frac{\overrightarrow{D_nA}}{\|\overrightarrow{D_nA}\|},\frac{\overrightarrow{D_nB}}{\|\overrightarrow{D_nB}\|}\right)=
\left(\frac{\overrightarrow{BA}}{\|\overrightarrow{BA}\|},-\eta\right)\rightarrow
\left(\tau_1, \eta\right),
\end{eqnarray*}
when $n \to \infty$.
If $\eta=\eta_h+\eta_v$, where $\eta_h$ ($\eta_v$) is parallel (respectively, is orthogonal) to the plane $ABC$, then
(since $(\eta_v, \tau_i)=0$ for $i=1,2,3$) we have

\begin{equation}\label{eq.lim.point.im}
(\cos \overline{\alpha}, \cos \overline{\beta}, \cos \overline{\gamma})
\rightarrow \left( -\frac{(\tau_2,\eta_h)}{\|\eta\|}, \cos \angle ABC,\frac{(\tau_1,\eta_h)}{\|\eta\|}\right).
\end{equation}

In particular, if $\eta_h=0$, then
$(\cos \overline{\alpha}, \cos \overline{\beta}, \cos \overline{\gamma}) \rightarrow \left(0, \cos \angle ABC,0\right)$.

Let us fix  vectors $a,b \in \mathbb{R}^3$ such that $a$ is parallel and $b$ is orthogonal to the plane $ABC$, $\|a\|=\|b\|=1$.
Now, let us consider a one-parameter family of vectors $\eta=\eta(t)=ta+\sqrt{1-t^2}b$, $t\in [0,1]$.
It is clear that $\|\eta(t)\|=1$, $\eta_h=ta$, $\eta_v=\sqrt{1-t^2}b$,  $(\eta, \tau_i)=(\eta_h, \tau_i)=t(a,\tau_i)$ for $i=1,2,3$.
In particular,  $\|\eta_h\| / \|\eta\|=t$ can be any number in $[0,1]$. For a fixed $t\in [0,1]$, we get
\begin{equation*}
(\cos \overline{\alpha}, \cos \overline{\beta}, \cos \overline{\gamma})
\rightarrow \left( -(\tau_2,a)\cdot t, \cos \angle ABC,(\tau_1,a)\cdot t \right).
\end{equation*}
This observation implies the following result.

\begin{lemma}\label{le.limitpoint.1}
The limit points of the points $F(D)$, when $D \to B$, is the following set:
$$
\operatorname{Lim}(B)=\left\{\bigl(x, y_0, z\bigr)\in \mathbb{R}^3 \,\left|\,
x^2+z^2-2y_0xz \leq 1-y_0^2,\, y_0=\cos \angle ABC\right.\right\}.
$$
\end{lemma}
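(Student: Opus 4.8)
The plan is to build directly on the limiting formula \eqref{eq.lim.point.im} derived just above the statement. Passing to a subsequence, every sequence $D_n\to B$ has $\overrightarrow{BD_n}/\|\overrightarrow{BD_n}\|$ converging to some unit vector $\eta$ lying in the closed upper hemisphere of $S^2$ (its third coordinate is non-negative since the $D_n$ lie in $\Pi^+\cup\Pi^0$). By \eqref{eq.lim.point.im} the corresponding limit of $F(D_n)$ is
$$
\left(-\frac{(\tau_2,\eta_h)}{\|\eta\|},\ \cos\angle ABC,\ \frac{(\tau_1,\eta_h)}{\|\eta\|}\right),
$$
so the middle coordinate is always $y_0=\cos\angle ABC$, and the outer two depend only on the horizontal part $\eta_h$. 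First I would set $v:=\eta_h/\|\eta\|$ and note that, as $\eta$ runs over the closed upper hemisphere, $v$ runs over the entire closed unit disk of the plane $\Pi^0$: every horizontal $v$ with $\|v\|\le 1$ is recovered from the admissible direction $\eta=v+\sqrt{1-\|v\|^2}\,b$ (this is precisely the family $\eta(t)=ta+\sqrt{1-t^2}b$ used above, with $a$ ranging over all horizontal unit directions). Hence the set of limit points is exactly the image of the closed unit disk under the linear map $v\mapsto(x,z):=\bigl(-(\tau_2,v),(\tau_1,v)\bigr)$, paired with the fixed coordinate $y_0$.

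The core computation is to invert this linear map. Since the triangle $ABC$ is non-degenerate, $\tau_1$ and $\tau_2$ are linearly independent, so $v\mapsto(x,z)$ is a linear isomorphism of $\mathbb{R}^2$ and the image of the closed disk is a filled ellipse whose interior comes from $\|v\|<1$ and whose boundary from $\|v\|=1$. To recover the defining inequality, I would express $\|v\|^2$ in terms of $(x,z)$ using the Gram matrix $G=\left(\begin{smallmatrix}1 & -y_0\\ -y_0 & 1\end{smallmatrix}\right)$ of $(\tau_1,\tau_2)$, where $y_0=-(\tau_1,\tau_2)=\cos\angle ABC$. Writing $(z,-x)^{T}=Tv$ with $T$ the matrix whose rows are $\tau_1,\tau_2$, one has $TT^{T}=G$, and therefore
$$
\|v\|^2=(z,-x)\,(TT^{T})^{-1}(z,-x)^{T}=\frac{1}{1-y_0^2}\,(z,-x)\begin{pmatrix}1 & y_0\\ y_0 & 1\end{pmatrix}\begin{pmatrix}z\\ -x\end{pmatrix}=\frac{x^2+z^2-2y_0xz}{1-y_0^2}.
$$
Here $1-y_0^2=\sin^2\angle ABC>0$, so $G$ is indeed invertible and the formula is legitimate.

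Finally, the constraint $\|v\|\le 1$ is equivalent to $x^2+z^2-2y_0xz\le 1-y_0^2$, which is exactly the condition defining $\operatorname{Lim}(B)$. For the converse inclusion, every $(x,y_0,z)$ satisfying this inequality is realized: solve the linear system for the unique $v$, note $\|v\|\le 1$, and take any sequence $D_n\to B$ along $\eta=v+\sqrt{1-\|v\|^2}\,b$. This establishes both inclusions and proves the lemma. I expect the only genuinely technical point to be the inversion of $G$ and the bookkeeping that identifies the resulting quadratic form with $x^2+z^2-2y_0xz$; the surjectivity of $\eta\mapsto v$ onto the unit disk is then a short geometric remark.
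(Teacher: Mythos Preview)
Your proof is correct and follows the same overall plan as the paper: both start from \eqref{eq.lim.point.im}, observe that the middle coordinate is fixed at $y_0=\cos\angle ABC$, and reduce the problem to describing the image of the closed unit disk in $\Pi^0$ under the linear map $v\mapsto\bigl(-(\tau_2,v),(\tau_1,v)\bigr)$.

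The difference is purely in the computation that identifies this image with the solid ellipse $x^2+z^2-2y_0xz\le 1-y_0^2$. The paper treats only the boundary circle explicitly, parametrizing $\theta=(\cos u,\sin u)\in S^1$, solving the linear system for $\cos u,\sin u$ in terms of $x,z$, and using $\cos^2u+\sin^2u=1$; the interior is then filled in by the scaling parameter $t\in[0,1]$ already introduced in the discussion preceding the lemma. Your Gram-matrix inversion does both the boundary and the interior in one stroke and avoids the trigonometric bookkeeping, at the cost of a small linear-algebra computation; the paper's version is more hands-on but perhaps more transparent for readers who like explicit coordinates. Either way the content is the same.
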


\begin{proof} The above discussion and the equality  $\cos \angle ABC=-\cos(\varphi_2-\varphi_1)$
imply that it suffices to prove the following result: the real numbers $x$ and $z$
satisfy the equation $x^2+z^2+2xz\cdot \cos(\varphi_2-\varphi_1)=\sin^2 (\varphi_2-\varphi_1)$ if and only if there is a vector $\theta \in S^1=\{(x,z)\in \mathbb{R}^2\,|\,x^2+z^2=1\}$ such that
$x=-(\tau_2, \theta)$ and $z=(\tau_1, \theta)$.

If $\theta=(\cos u, \sin u)$, then
\begin{eqnarray*}
x&=&-(\tau_2, \theta)=-\cos(\varphi_2-u)=-\cos \varphi_2\cos u-\sin \varphi_2\sin u,\\
z&=&(\tau_1, \theta)=\cos(\varphi_1-u)=\cos \varphi_1\cos u+\sin \varphi_1\sin u.
\end{eqnarray*}
Therefore, we get (note that $\sin(\varphi_1-\varphi_2)\neq 0)$ that
$$
\cos u=-\frac{z \sin \varphi_2+x \sin \varphi_1}{\sin(\varphi_1-\varphi_2)}, \quad
\sin u=\frac{z \cos \varphi_2+x \cos \varphi_1}{\sin(\varphi_1-\varphi_2)},
$$
hence,
$$
1=\cos^2 u+\sin^2 u=\frac{x^2+z^2+2xz \cos(\varphi_2-\varphi_1)}{\sin^2 (\varphi_2-\varphi_1)}.
$$
On the other hand, if $x^2+z^2+2xz\cdot \cos(\varphi_2-\varphi_1)=\sin^2 (\varphi_2-\varphi_1)$, then we can find some $u\in \mathbb{R}$, such that
$\theta=(\cos u, \sin u)$ satisfies the previous two equalities. Therefore, $x=-(\tau_2, \theta)$ and $z=(\tau_1, \theta)$. The lemma is proved.
\end{proof}
\smallskip

\begin{figure}[t]
\begin{minipage}[h]{0.3\textwidth}
\center{\includegraphics[width=0.97\textwidth]{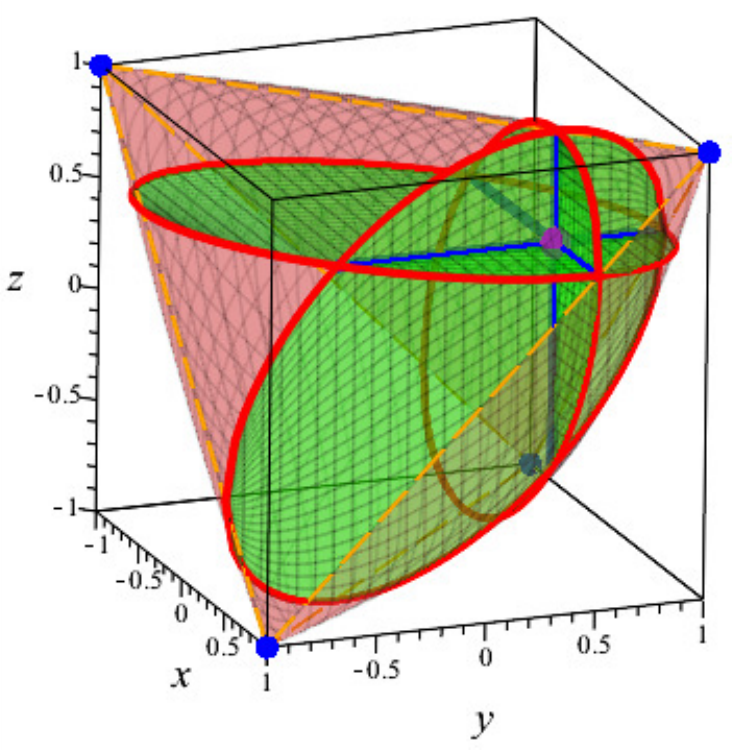}} \\ a)                     \\
\end{minipage}
\begin{minipage}[h]{0.3\textwidth}
\center{\includegraphics[width=0.92\textwidth]{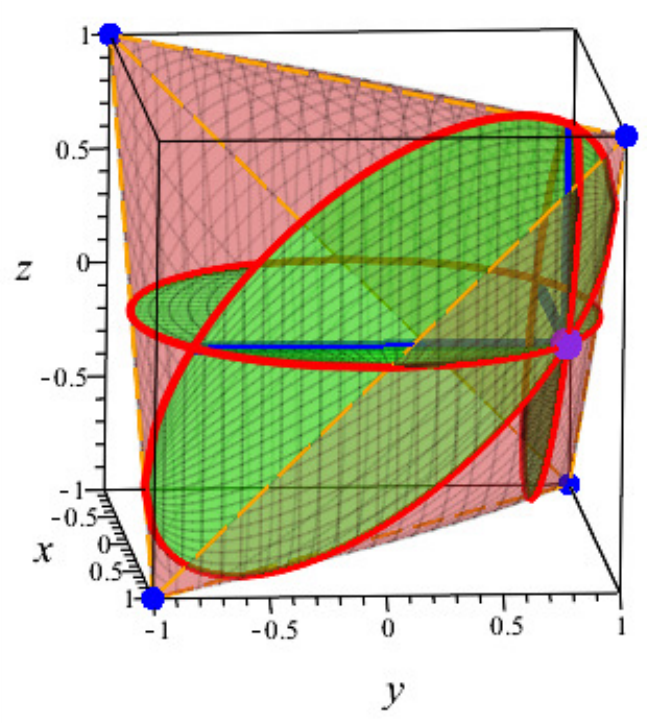}}   \\ b) \\
\end{minipage}
\begin{minipage}[h]{0.3\textwidth}
\center{\includegraphics[width=0.95\textwidth]{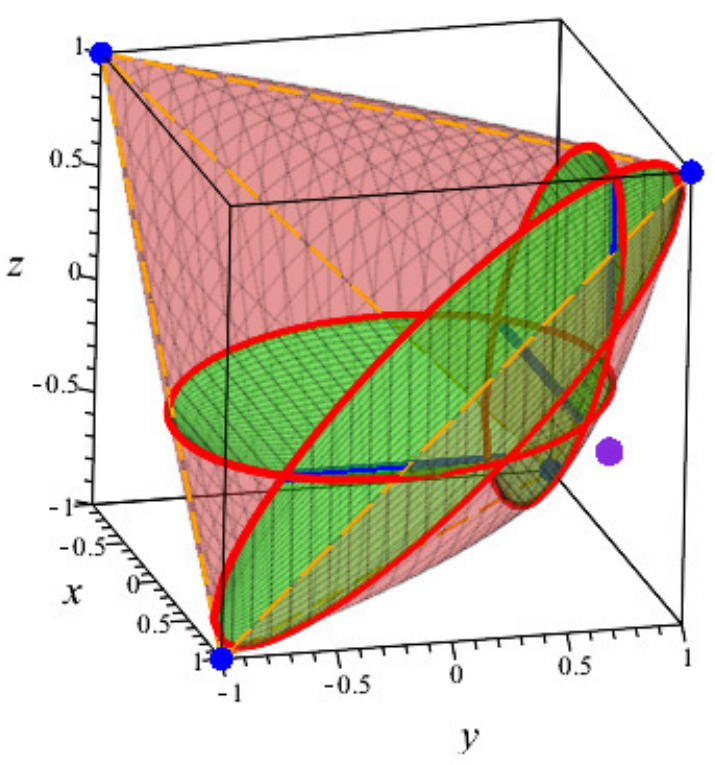}}   \\ c) \\
\end{minipage}
\caption{The limit solid ellipses $\operatorname{Lim}(A)$, $\operatorname{Lim}(B)$ and $\operatorname{Lim}(C)$ for:
a) an acute-angled $\triangle ABC$; b) a right-angled $\triangle ABC$; c) an obtuse-angled $\triangle ABC$.
Here the ``pillowcase'' $\mathbb{BP}$ is drawn in orange, the blue dots on $\mathbb{BP}$ are its four  non-smoothness points
(which are the vertices of a regular tetrahedron), the purple point is the intersection point of three planes containing three limit solid ellipses.}
\label{Ellipset}
\end{figure}

Using similar arguments for the vertices $A$ and $C$, we obtain the following result.

\begin{prop}\label{pr.limitpoint.1}
The limit points of the points $F(D)$, when $D \to A$, $D \to B$, or $D \to C$, are as follows:
$$
\operatorname{Lim}(A)=\left\{\bigl(x_0, y, z\bigr)\in \mathbb{R}^3 \,\left|\,
y^2+z^2-2 x_0yz \leq 1-x_0^2, \, x_0=\cos \angle BAC \right.\right\},
$$
$$
\operatorname{Lim}(B)=\left\{\bigl(x, y_0, z\bigr)\in \mathbb{R}^3 \,\left|\,
x^2+z^2-2y_0xz \leq 1-y_0^2,\, y_0=\cos \angle ABC\right.\right\},
$$
$$
\operatorname{Lim}(C)=\left\{\bigl(x, y,z_0\bigr)\in \mathbb{R}^3 \,\left|\,
x^2+y^2-2z_0xy \leq 1-z_0^2,\, z_0=\cos \angle ACB  \right.\right\}.
$$
\end{prop}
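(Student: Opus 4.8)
The plan is to observe that the case $D\to B$ is exactly the content of Lemma~\ref{le.limitpoint.1}, and that the cases $D\to A$ and $D\to C$ follow by repeating verbatim the computation carried out just before that lemma, with the roles of the three vertices cyclically interchanged. Thus the only genuine work is to run the limit computation once more at $A$ (and at $C$) and to check that the resulting solid ellipse is the one claimed.

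First I would fix a sequence $D_n\to A$ and, passing to a subsequence, assume $\overrightarrow{AD_n}/\|\overrightarrow{AD_n}\|\to \eta\in S^2$, so that $\overrightarrow{D_nA}/\|\overrightarrow{D_nA}\|\to -\eta$, $\overrightarrow{D_nB}/\|\overrightarrow{D_nB}\|\to \tau_1$, and $\overrightarrow{D_nC}/\|\overrightarrow{D_nC}\|\to -\tau_3$. Taking the inner products exactly as in the derivation of \eqref{eq.lim.point.im}, the three cosines converge to
\begin{equation*}
(\cos\overline{\alpha},\cos\overline{\beta},\cos\overline{\gamma})\to
\bigl(\cos\angle BAC,\ (\eta,\tau_3),\ -(\eta,\tau_1)\bigr).
\end{equation*}
Here the first coordinate is constant: geometrically, as $D$ approaches $A$ the rays $\overrightarrow{DB}$ and $\overrightarrow{DC}$ tend to the two sides of $\triangle ABC$ meeting at $A$, so $\overline{\alpha}=\angle BDC\to\angle BAC$. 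The remaining two coordinates depend only on the horizontal part $\eta_h$ of $\eta$, since $(\eta,\tau_i)=(\eta_h,\tau_i)$.

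Next I would characterize the achievable pairs just as in the proof of Lemma~\ref{le.limitpoint.1}. Setting $\theta=\eta_h/\|\eta_h\|\in S^1$ and $t=\|\eta_h\|/\|\eta\|\in[0,1]$, the boundary case $t=1$ gives $y=(\theta,\tau_3)=\cos(\varphi_3-u)$ and $z=-(\theta,\tau_1)=-\cos(\varphi_1-u)$ for $\theta=(\cos u,\sin u)$; eliminating $u$ yields the ellipse $y^2+z^2+2yz\cos(\varphi_3-\varphi_1)=\sin^2(\varphi_3-\varphi_1)$, and substituting $\cos\angle BAC=-(\tau_1,\tau_3)=-\cos(\varphi_3-\varphi_1)=:x_0$ turns this into $y^2+z^2-2x_0yz=1-x_0^2$. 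Letting $t$ range over $[0,1]$ then fills in the solid ellipse $y^2+z^2-2x_0yz\le 1-x_0^2$, which is exactly $\operatorname{Lim}(A)$. The case $D\to C$ is identical, now using the two edges at $C$ (limiting directions $-\tau_2$ and $\tau_3$) and with $\overline{\gamma}=\angle ADB\to\angle ACB$ playing the role of the fixed coordinate $z_0=\cos\angle ACB$, which produces $x^2+y^2-2z_0xy\le 1-z_0^2=\operatorname{Lim}(C)$.

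The argument is almost entirely bookkeeping, so the step I expect to be the main obstacle is getting the signs and the identification of the correct edge vectors $\tau_i$ right at each vertex: the vectors $\tau_1,\tau_2,\tau_3$ circulate around $\triangle ABC$ rather than emanating from a common point, so at each vertex one of the two incident edges is traversed ``backwards'' and contributes a sign. Fixing a single consistent convention---the constant coordinate is always the cosine of the interior angle at the vertex that $D$ approaches---is what guarantees that the three solid ellipses come out in the stated symmetric form.
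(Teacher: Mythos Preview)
Your proposal is correct and follows precisely the paper's own approach: the paper proves the case $D\to B$ in Lemma~\ref{le.limitpoint.1} via the limit formula \eqref{eq.lim.point.im} and then states that Proposition~\ref{pr.limitpoint.1} follows by ``similar argument for the vertices $A$ and $C$''. You have simply made that ``similar argument'' explicit, correctly tracking which of the circulating edge vectors $\tau_1,\tau_2,\tau_3$ enters with a sign at each vertex, and your elimination of the parameter $u$ to obtain the ellipse equation matches the computation in the proof of Lemma~\ref{le.limitpoint.1}.
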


It is clear that each of the sets $\operatorname{Lim}(A)$, $\operatorname{Lim}(B)$, and $\operatorname{Lim}(C)$
is the convex hull of some ellipse in a suitable plane (so-called ``solid ellipses''). We will call them {\it limit solid ellipses}.

\begin{prop}\label{pr.limitpoint.2}
The point $\Bigl(\cos \angle BAC, \cos \angle ABC, \cos \angle ACB\Bigr)$ is a common point of three planes containing
the limit solid ellipses $\operatorname{Lim}(A)$, $\operatorname{Lim}(B)$, and $\operatorname{Lim}(C)$.
Moreover, this point lies in the interior of $\mathbb{P}$, on the surface $\mathbb{BP}$, and outside of $\mathbb{P}$ in according to the property of
$\triangle ABC$ to be  acute-angled, right-angled and obtuse-angled respectively.
\end{prop}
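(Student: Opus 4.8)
The plan is to split the proof into two essentially independent parts, matching the two sentences of the statement, and to let a single classical trigonometric identity do all the real work.

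For the first assertion I would simply read the three planes off Proposition \ref{pr.limitpoint.1}. The description of $\operatorname{Lim}(A)$ shows that every point of this solid ellipse has first coordinate equal to $x_0=\cos\angle BAC$, so it lies in the plane $\{x=\cos\angle BAC\}$; likewise $\operatorname{Lim}(B)$ lies in $\{y=\cos\angle ABC\}$ and $\operatorname{Lim}(C)$ lies in $\{z=\cos\angle ACB\}$. These are three mutually orthogonal coordinate-type planes, each perpendicular to a distinct axis, so they meet in a unique point, which is obviously $(\cos\angle BAC,\cos\angle ABC,\cos\angle ACB)$. This settles the first claim with no genuine computation.

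For the second assertion, write $a=\angle BAC$, $b=\angle ABC$, $c=\angle ACB$, so that $a+b+c=\pi$, and set $P=(\cos a,\cos b,\cos c)$. Since each angle lies in $(0,\pi)$ we certainly have $P\in[-1,1]^3$, so the position of $P$ relative to $\mathbb{P}$ is governed entirely by the sign of
$$
g(P):=1+2\cos a\cos b\cos c-\cos^2 a-\cos^2 b-\cos^2 c.
$$
The heart of the argument is the identity $\cos^2 a+\cos^2 b+\cos^2 c+2\cos a\cos b\cos c=1$, valid whenever $a+b+c=\pi$; I would verify it by substituting $c=\pi-a-b$ and expanding. Feeding this identity into the formula for $g(P)$ collapses it to the clean expression $g(P)=4\cos a\cos b\cos c$.

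With this expression the trichotomy is immediate and forms the conclusion. In an acute triangle all three cosines are positive, so $g(P)>0$ and $P$ lies in the interior of $\mathbb{P}$. In a right triangle exactly one angle equals $\pi/2$, its cosine vanishes, $g(P)=0$, and $P$ lies on $\mathbb{BP}$. In an obtuse triangle exactly one angle exceeds $\pi/2$ while the other two are acute, so exactly one cosine is negative and $g(P)<0$, placing $P$ outside $\mathbb{P}$. I do not expect a real obstacle here: the only nonroutine ingredient is the triangle identity, and once it is in hand the sign analysis is mechanical.
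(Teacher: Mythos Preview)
Your proposal is correct and follows essentially the same route as the paper: both arguments reduce the second assertion to the formula $g(P)=4\cos a\cos b\cos c$ and then read off the trichotomy from the signs of the cosines. The one minor difference is how the underlying identity $\cos^2 a+\cos^2 b+\cos^2 c+2\cos a\cos b\cos c=1$ is obtained: you propose verifying it directly by substituting $c=\pi-a-b$, whereas the paper observes that the point $\widetilde{A}=(-\cos a,\cos b,\cos c)$ is already known to lie on $\mathbb{BP}$ (see \eqref{eq.spec.points.1} and Lemma~\ref{le.map.plane.1}), which is exactly this identity written out. Either justification is fine.
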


\begin{proof}
We know that the points $\widetilde{A}=\Bigl(-\cos \angle BAC, \cos \angle ABC, \cos \angle ACB\Bigr)$
lie on the surface $\mathbb{BP}$ (see \eqref{eq.spec.points.1} or Proposition \ref{pr.limitpoint.1}).
This means that
$$
1-2 \cos \angle BAC \cdot \cos \angle ABC \cdot \cos \angle ACB-\cos^2 \angle BAC -\cos^2 \angle  ABC-\cos^2 \angle ACB=0.
$$
On the other hand, the point $\Bigl(\cos \angle BAC, \cos \angle ABC, \cos \angle ACB\Bigr)$ lies in the interior of $\mathbb{P}$, on the surface $\mathbb{BP}$,
and outside of $\mathbb{P}$ just according to the property of the sign of
\begin{eqnarray*}
1+2 \cos \angle BAC \cdot \cos \angle ABC \cdot \cos \angle ACB-\cos^2 \angle BAC -\cos^2 \angle  ABC-\cos^2 \angle ACB\\
=4\cos \angle BAC \cdot \cos \angle ABC \cdot \cos \angle ACB
\end{eqnarray*}
to be positive, zero, or negative. This proves the proposition.
\end{proof}

The relative position of the limit solid ellipses $\operatorname{Lim}(A)$, $\operatorname{Lim}(B)$, and $\operatorname{Lim}(C)$,
and the point
$\Bigl(\cos \angle BAC, \cos \angle ABC, \cos \angle ACB\Bigr)$
for the acute-angled, right-angled and obtuse-angled cases is shown in
Fig.~\ref{Ellipset}

\medskip

\begin{prop}\label{pr.closure.space.1}
The closure $\CFT$ of $F(\GT)$, where $\GT=\Pi^+ \bigcup \bigl(\Pi^0 \setminus \{A,B,C\} \bigr)$ {\rm(}see \eqref{eq.fmain}{\rm)},
is a subset of the ``pillow''
$$
\mathbb{P}=\left\{(x,y,z)\in [-1,1]^3\,|\,1+2xyz-x^2-y^2-z^2 \geq  0 \right\}
$$ and has the following form:
$$
\CFT=F\Bigl(\Pi^+ \bigcup \Pi^0\setminus \{A,B,C\}\Bigr) \bigcup \{(1,1,1)\}\bigcup \operatorname{Lim}(A) \bigcup \operatorname{Lim}(B) \bigcup \operatorname{Lim}(C).
$$
\end{prop}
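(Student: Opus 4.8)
The plan is to establish the asserted equality by proving two inclusions, after first disposing of the containment $\CFT \subset \mathbb{P}$. The latter is immediate: formula \eqref{eq.vol.1} gives $F(\GT) \subset \mathbb{P}$, and since $\mathbb{P}$ is closed in $\mathbb{R}^3$ the closure $\CFT$ is contained in $\mathbb{P}$ as well (this is precisely Theorem \ref{theo.1}). What remains is to identify $\CFT$ exactly.

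For the inclusion ``$\supseteq$'' I would check that each of the five listed pieces lies in $\CFT$. The set $F(\GT)$ is contained in its own closure by definition. The point $(1,1,1)$ is a limit point of $F$: by Lemma \ref{le.infin.1}, any sequence $D_n \in \GT$ with $\min\{x,y,z\} \to \infty$ (equivalently $D_n \to \infty$) satisfies $F(D_n) \to (1,1,1)$. Finally, each of $\operatorname{Lim}(A)$, $\operatorname{Lim}(B)$, $\operatorname{Lim}(C)$ consists, by Proposition \ref{pr.limitpoint.1}, precisely of the subsequential limits of $F(D_n)$ for sequences with $D_n \to A$, $D_n \to B$, $D_n \to C$ respectively, and all such limits belong to the closure $\CFT$.

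The substance is the reverse inclusion ``$\subseteq$'', and here I would exploit the compactification noted after \eqref{eq.fmain}: the space $\Pi^+ \cup \Pi^0 \cup \{\infty\}$ is homeomorphic to a closed ball and hence compact. Let $P \in \CFT$, so $P = \lim_n F(D_n)$ for some $D_n \in \GT$. Passing to a subsequence, I may assume $D_n$ converges in this ball to some $D^*$. There are exactly three possibilities for $D^*$, corresponding to the three ways a point of $\GT$ can fail to have a limit inside $\GT$: either $D^* \in \GT$, or $D^* \in \{A,B,C\}$, or $D^* = \infty$. In the first case, continuity of $F$ on $\GT$ (the cosine formulas \eqref{eq.mapF.1} are continuous wherever $x,y,z>0$, which holds on all of $\mathbb{R}^3 \setminus \{A,B,C\}$) gives $P = F(D^*) \in F(\GT)$. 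If $D^* = \infty$, then Lemma \ref{le.infin.1} forces $P = (1,1,1)$. If $D^* \in \{A,B,C\}$, then Proposition \ref{pr.limitpoint.1} places $P$ in the corresponding limit solid ellipse. These three cases exhaust the closure and yield exactly the claimed union.

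I expect the main obstacle to be organizational rather than computational, since all the hard boundary analysis, namely that approaching a vertex produces an entire solid ellipse of limit values while escaping to infinity produces the single value $(1,1,1)$, has already been carried out in Lemma \ref{le.infin.1} and Proposition \ref{pr.limitpoint.1}. The point requiring care is the correct use of the compactification: one must invoke that $\Pi^+ \cup \Pi^0 \cup \{\infty\}$ is compact with a single added point $\infty$ (so that every unbounded subsequence, regardless of its direction of escape, has limit $\infty$), and that the only points of this closed ball not lying in $\GT$ are $A$, $B$, $C$, and $\infty$. Verifying continuity of $F$ up to, but not including, these four exceptional points, and confirming that no additional limit points can arise, then completes the argument.
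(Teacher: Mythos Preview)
Your proposal is correct and follows essentially the same approach as the paper: take a point of $\CFT$, choose a preimage sequence in $\GT$, and split into cases according to whether the sequence escapes to infinity, converges to a vertex $A$, $B$, $C$, or converges to a point of $\GT$, invoking Lemma~\ref{le.infin.1} and Proposition~\ref{pr.limitpoint.1} respectively. The only cosmetic differences are that you phrase the case split via the one-point compactification while the paper uses the bounded/unbounded dichotomy directly, and you verify the easy inclusion ``$\supseteq$'' explicitly whereas the paper leaves it implicit.
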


\begin{proof}
We know that $F(\GT) \subset \CFT \subset \mathbb{P}$ by Theorem \ref{theo.1}.
Let us consider any $w_0 \in \CFT$. Then there is a sequence of points $w_n \in F(\GT)$, $n\in \mathbb{N}$, such that $w_n \to w_0$ as $n\to \infty$.
Now, we consider  a sequence  $\{u_n\}_{n\in \mathbb{N}}$ of points $u_n \in \GT$ such that $w_n=F(u_n)$, $n\in \mathbb{N}$.

If the sequence $\{u_n\}_{n\in \mathbb{N}}$ is unbounded, then (passing to a subsequence if necessary) we may assume that $\|u_n\| \to \infty$ when $n\to \infty$.
In this case $w_n \rightarrow (1,1,1)$, when $n\to \infty$ by Lemma \ref{le.infin.1}.
If the sequence $\{u_n\}_{n\in \mathbb{N}}$ is bounded, then (passing to a subsequence if need) we may assume that $u_n \to u_0 \in \Pi^+ \bigcup \Pi^0$
when $n \to \infty$. If $u_0 \not\in\{A,B,C\}$, then $w_0 =F(u_0) \in F\Bigl(\Pi^+ \bigcup \Pi^0\setminus \{A,B,C\}\Bigr)$.
On the other hand, if, say,  $u_0=A$ (the cases $u_0=B$ and $u_0=C$ can be settled similarly), then $w_0 \in \operatorname{Lim}(A)$ by the construction of
$\operatorname{Lim}(A)$. The proposition is proved.
\end{proof}
\smallskip

Using \eqref{eq.lim.point.im} and Propositions \ref{pr.limitpoint.1} and \ref{pr.closure.space.1}, we obtain the following result:

\begin{prop}\label{pr.closure.plane.1}
The closure of $F\Bigl(\Pi^0\setminus \{A,B,C\}\Bigr)$ is a subset of the
``pillowcase'' $\mathbb{BP}=\left\{(x,y,z)\in [-1,1]^3\,|\,1+2xyz-x^2-y^2-z^2 = 0 \right\}$ and has the following form:
$$
F\Bigl(\Pi^0\setminus \{A,B,C\}\Bigr) \bigcup \{(1,1,1)\} \bigcup E_A \bigcup E_B \bigcup E_C,
$$
where $E_A, E_B, E_C$ are the following ellipses:
$$
E_A=
\left\{\bigl(x_0, y, z\bigr)\in \mathbb{R}^3 \,\left|\,
y^2+z^2-2x_0yz  = 1-x_0^2, \, x_0=\cos \angle BAC \right.\right\},
$$
$$
E_B=
\left\{\bigl(x, y_0, z\bigr)\in \mathbb{R}^3 \,\left|\,
x^2+z^2-2y_0 xz = 1-y_0^2,\, y_0=\cos \angle ABC\right.\right\},
$$
$$
E_C=
\left\{\bigl(x, y,z_0\bigr)\in \mathbb{R}^3 \,\left|\,
x^2+y^2-2 z_0 xy = 1-z_0^2,\, z_0=\cos \angle ACB  \right.\right\}.
$$
In this case, $E_A, E_B, E_C$ are the intersections of the surface $\mathbb{BP}$
with the planes $x=\cos \angle BAC$, $y=\cos \angle ABC$, $z=\cos \angle ACB$, respectively. Moreover, we get
\begin{eqnarray*}
\widetilde{A}&=&\Bigl(-\cos \angle BAC, \cos \angle ABC, \cos \angle ACB\Bigl) \in E_B\bigcap E_C,\\
\widetilde{B}&=&\Bigl(\cos \angle BAC, -\cos \angle ABC, \cos \angle ACB\Bigl) \in E_A\bigcap E_C,\\
\widetilde{C}&=&\Bigl(\cos \angle BAC, \cos \angle ABC, -\cos \angle ACB\Bigl) \in  E_A\bigcap E_B.
\end{eqnarray*}
\end{prop}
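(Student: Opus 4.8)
The plan is to mirror the proof of Proposition \ref{pr.closure.space.1}, but to restrict all approximating sequences to the plane $\Pi^0$. First, Lemma \ref{le.inpl.1} gives $F\bigl(\Pi^0\setminus\{A,B,C\}\bigr)\subset\mathbb{BP}$; since $\mathbb{BP}$ is closed, its whole closure lies in $\mathbb{BP}$, which settles the first assertion. For the precise form of the closure, I would take an arbitrary $w_0$ in it, write $w_0=\lim_n F(u_n)$ with $u_n\in\Pi^0\setminus\{A,B,C\}$, and split into cases exactly as in Proposition \ref{pr.closure.space.1}.

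If $\{u_n\}$ is unbounded, then after passing to a subsequence $\|u_n\|\to\infty$ and Lemma \ref{le.infin.1} forces $w_0=(1,1,1)$. If $\{u_n\}$ is bounded, then a subsequence converges to some $u_0\in\Pi^0$; when $u_0\notin\{A,B,C\}$ we simply get $w_0=F(u_0)\in F\bigl(\Pi^0\setminus\{A,B,C\}\bigr)$, and when $u_0\in\{A,B,C\}$ the limit is governed by \eqref{eq.lim.point.im}. Here lies the decisive point: because every $u_n$ already lies in the plane, the limiting direction $\eta=\lim_n \overrightarrow{Bu_n}/\|\overrightarrow{Bu_n}\|$ is a unit vector tangent to $\Pi^0$, so its vertical component vanishes and $\|\eta_h\|/\|\eta\|=1$ in \eqref{eq.lim.point.im}. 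Consequently $w_0$ lands on the boundary ellipse of the solid ellipse $\operatorname{Lim}(B)$ of Proposition \ref{pr.limitpoint.1}, i.e. on $E_B$ (and analogously $E_A$, $E_C$ for $u_0=A,C$). This is precisely what distinguishes the present statement from Proposition \ref{pr.closure.space.1}: approaches from $\Pi^+$ allow $\|\eta_h\|/\|\eta\|$ to range over all of $[0,1]$ and thus fill the solid ellipses, whereas in-plane approaches only reach their boundaries.

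For the reverse inclusions I would check that each of $(1,1,1)$, $E_A$, $E_B$, $E_C$ is actually attained: the point $(1,1,1)$ arises from any in-plane sequence escaping to infinity (Lemma \ref{le.infin.1}), while running the tangent direction $\eta_h$ over the whole unit circle in $\Pi^0$ parametrizes $E_B$ completely via $x=-(\tau_2,\eta_h)$, $z=(\tau_1,\eta_h)$ — this is exactly the surjectivity computation carried out in the proof of Lemma \ref{le.limitpoint.1}, whose image equation $x^2+z^2-2y_0xz=1-y_0^2$ is the defining equation of $E_B$. The identification of $E_A,E_B,E_C$ as the slices of $\mathbb{BP}$ by the planes $x=\cos\angle BAC$, $y=\cos\angle ABC$, $z=\cos\angle ACB$ is then immediate from \eqref{eq.pillowc}: fixing one coordinate at the stated value turns $1+2xyz-x^2-y^2-z^2=0$ into the displayed ellipse equation, consistent with property~5) of Proposition \ref{pr.prop.pc}. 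The incidences $\widetilde{A}\in E_B\cap E_C$, $\widetilde{B}\in E_A\cap E_C$, $\widetilde{C}\in E_A\cap E_B$ follow by inspection of coordinates: $\widetilde{A}\in\mathbb{BP}$ has second coordinate $\cos\angle ABC$ and third coordinate $\cos\angle ACB$, so it lies in both slicing planes and hence on both $E_B$ and $E_C$.

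The main obstacle is the solid-versus-boundary distinction described above; once one observes that the in-plane constraint kills the vertical component $\eta_v$ in \eqref{eq.lim.point.im}, the rest is a faithful transcription of the proof of Proposition \ref{pr.closure.space.1} together with the parametrization already established in the proof of Lemma \ref{le.limitpoint.1}.
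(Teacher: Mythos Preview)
Your proposal is correct and follows essentially the same route as the paper: invoke Lemma~\ref{le.inpl.1} and closedness of $\mathbb{BP}$, reproduce the case split of Proposition~\ref{pr.closure.space.1} for planar sequences, and observe via \eqref{eq.lim.point.im} that the in-plane constraint forces $\eta_v=0$, yielding the boundary ellipses $E_A,E_B,E_C$ rather than the full solid ellipses. The paper's proof is terser but structurally identical; your explicit treatment of the reverse inclusions and the incidences $\widetilde{A}\in E_B\cap E_C$, etc., is a welcome elaboration of what the paper dispatches as ``direct computations.''
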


\begin{proof}
We know that $F(D)\in \mathbb{BP}$ for any $D \in \Pi^0\setminus \{A,B,C\}$ by Lemma
\ref{le.inpl.1}. Since $\mathbb{BP}$ is closed in $\mathbb{R}^3$, then we get that the closure of  $F\Bigl(\Pi^0\setminus \{A,B,C\}\Bigr)$ is a subset
of $\mathbb{BP}\bigcap \CFT$, see Proposition \ref{pr.closure.space.1}.

Now, if we consider the limiting sets when  $D \in \Pi^0\setminus \{A,B,C\}$ tends to one of the vertices $A,B,C$,
then we get the sets $E_A$, $E_B$, $E_C$   according to Proposition \ref{pr.limitpoint.1},
formula~\eqref{eq.lim.point.im}, and the fact that the vertical part of the vector $\eta$ is trivial in our case.

The last assertion is easily checked by direct computations (see also Lemma \ref{le.map.plane.1}).
\end{proof}

\begin{example}\label{ex.ellril.1}
Recall, that $\widetilde{A}$, $\widetilde{B}$, $\widetilde{C}$ (see \eqref{eq.spec.points.1}) are on the ``pillowcase'' $\mathbb{BP}$.
If, say, $\angle BAC= \pi/2$ and $\angle ABC=\varphi$, then $\operatorname{Lim}(A)$ is a circle of unit radius in the plane of the second and the third coordinates
by Proposition \ref{pr.limitpoint.1} and \eqref{eq.pillowc}.
By Proposition~\ref{pr.closure.plane.1}, we get that
$\widetilde{A}=\Bigl(0, \cos \varphi, \sin \varphi \Bigl) \in  E_B\bigcap E_C$,
$\widetilde{B}=\Bigl(0, -\cos \varphi, \sin \varphi \Bigl) \in E_A\bigcap E_C$,
$\widetilde{C}=\Bigl(0, \cos \varphi, -\sin \varphi\Bigl) \in E_A\bigcap E_B$.
In particular, the point $\widetilde{C}$ is in all three ellipses $E_A$, $E_B$, and $E_C$.
\end{example}
\smallskip

In the last part of this section, we consider some special immersions of the $2$-dimensional sphere $S^2$ into $3$-dimensional Euclidean space
$\mathbb{E}^3$, related to our problem.

The map $F$ is defined and is continuous on $\Pi^0 \setminus \{A,B,C\}$. If we add to the plane $\Pi^0$ an infinite point $\infty$
(that is, we pass to its one-point compactification), then we can assume by continuity that $F(\infty)=(1,1,1)$, see Lemma \ref{le.infin.1}.
Moreover, we can naturally  ``glue'' three copies of the unit disk instead of the points $A,B,C$ with a natural continuous extension $F$ to these disks.

As these three copies of the unit disk we will use the limit solid ellipses $\operatorname{Lim}(A)$, $\operatorname{Lim}(B)$, $\operatorname{Lim}(C)$.
If we extend the map $F$ to each of these solid ellipses as the identity map, then we preserve the continuity of $F$
(see the arguments in the proof of Lemma \ref{le.limitpoint.1}).
If fact, we can determine the required gluings by the fact that the extended mapping $F$ remains continuous.

Hence, we obtain a surface
$F \Bigl(\bigl(\Pi^0 \setminus \{A,B,C\}\bigr)\bigcup \{\infty\}\Bigr)$ with the above-described gluing of $\Bigl(\operatorname{Lim}(A) \coprod \operatorname{Lim}(B) \coprod \operatorname{Lim}(C)\Bigr)$ to it (here we used disjoint unions),
that is homeomorphic to $S^2$, and the (extended) map $F$, that is an immersion of $S^2$
into $\mathbb{R}^3$.
The image of this immersion is
$$
F\Bigl(\Pi^0 \setminus \{A,B,C\}\Bigr)\bigcup \{(1,1,1)\} \bigcup \operatorname{Lim}(A) \bigcup\operatorname{Lim}(B)\bigcup\operatorname{Lim}(C).
$$

It is known that any two $C^2$ immersions of $S^2$ into $\mathbb{E}^3$ are regularly homotopic  (i.e., they can be connected with
immersions) \cite{Smale}.

For any immersion $g$  of $S^2$ into $\mathbb{E}^3$ in general position (i.e. when there are finitely many triple
points of $g$), the number of triple points is even (see e.g. \cite{Ban} or \cite{Nowik}).

In our case, the immersion $F$ is not in general position. Moreover, in any neighborhood of the point $\infty$, $F$ is not injective, see Proposition
\ref{pr.map.plane.1} and \eqref{eq.noninj.plane.1}.

\section{On the images of the points of $\Cyl$}\label{sec.image.cyl}

Consider the circumscribed circle $S$ around the triangle $ABC$ with the center $O$.
Then $\operatorname{Cyl}=S\times (-\infty,\infty)$, i.e. $D=(p,q,r) \in \operatorname{Cyl} \bigcap \Pi^{+}$ if and only if  $(p,q) \in S$ and $r>0$.
{\it Without loss of generality, we may and will assume that the radius of $S$ is $1$}.
\smallskip

First, let us consider the case  $r=0$.
Let us suppose that
the point $D=(p,q,0)\in \Pi^0$ lies on the circle $S$ and $D\not\in\{A,B,C\}$,
see the left panel of Fig.~\ref{Circ}.
By the definition of $F$, we have
$F(D)=\bigl(\cos \angle BDC,\cos\angle ADC,\cos\angle ADB\bigr)$.

We know that
$F(D)=\widetilde{A}=\Bigl(-\cos \angle BAC, \cos \angle ABC, \cos \angle ACB\Bigr)$, if the point $D$ lies on the arc $BC$;
$F(D)=\widetilde{B}=\Bigl(\cos \angle BAC, -\cos \angle ABC, \cos \angle ACB\Bigr)$, if the point $D$ lies on the arc $AC$ and
$F(D)=\widetilde{C}=\Bigl(\cos \angle BAC, \cos \angle ABC, -\cos \angle ACB\Bigr)$,
if the point $D$ lies on the arc $AB$
(see \eqref{eq.spec.points.1} and Lemma \ref{le.map.plane.1}).
Therefore, as we already know, the function $F$ on the set $S\backslash\{A,B,C\}$ can take only three possible values.
\bigskip

Now, we are going to find the limit points of the points $F(D)$, where $D\in \operatorname{Cyl}$ and when $D \to A$, $D \to B$, or $D \to C$.
For this goal we will use \eqref{eq.lim.point.im}.

We consider the vectors $\tau_i$ and real numbers $\varphi_i$, $i=1,2,3$, that are determined by the formulas
{\small$$
\tau_1=\frac{\overrightarrow{AB}}{\|\overrightarrow{AB}\|}=(\cos \varphi_1, \sin \varphi_1),\,
\tau_2=\frac{\overrightarrow{BC}}{\|\overrightarrow{BC}\|}=(\cos \varphi_2, \sin \varphi_2),\,
\tau_3=\frac{\overrightarrow{CA}}{\|\overrightarrow{CA}\|}=(\cos \varphi_3, \sin \varphi_3).
$$}
We consider also some vectors  $e_A$, $e_B$, $e_C$, such that they are of unit length, are parallel to the plane $ABC$ and are orthogonal to the vectors
$\overrightarrow{OA}$, $\overrightarrow{OB}$, $\overrightarrow{OC}$ respectively (recall that $O$ is the center of $S$). Let us also consider the vector $e_3=(0,0,1)$.

Let us consider the case where $D$ tends to $B$ on $\operatorname{Cyl}$  (the cases with $A$ and $C$ can be studied by the same methods).
Up to a positive multiple, only vectors of the type $\eta= e_3+ \lambda \cdot e_B$, where $\lambda \in\mathbb{R}$,
are tangent to  $\operatorname{Cyl}$ at the point $B$ with positive inner product $(\eta, e_3)$.

According to \eqref{eq.lim.point.im}, we have
$$
F(D)=(\cos \overline{\alpha}, \cos \overline{\beta}, \cos \overline{\gamma})\rightarrow
\left( -\frac{(\tau_2,\eta_h)}{\|\eta\|}, \cos \angle ABC,\frac{(\tau_1,\eta_h)}{\|\eta\|}\right),
$$
where $\eta_h=  \lambda \cdot e_B$, $\eta_v=e_3$ (note that $(\tau_1,\tau_2)=-\cos \angle ABC$).
Therefore, the limit points of the points $F(D)$, where $D\in \operatorname{Cyl}$ and  $D \to B$,
is a segment with the endpoints  $\bigl(-(\tau_2,e_B), \cos \angle ABC, (\tau_1,e_B)\bigr)$
and  $\bigl((\tau_2,e_B), \cos \angle ABC, -(\tau_1,e_B)\bigr)$.
On the other hand, if $D$ tends to $B$ along the arc $AB$ ($BC$), then $F(D)=\widetilde{C}$ (respectively, $F(D)=\widetilde{A}$).
Hence, $(\tau_1,e_B)=\pm \cos \angle ACB$ and $(\tau_2,e_B)=\pm \cos \angle BAC$.
\smallskip

Applying similar arguments to the points $A$ and $C$, we get the following result:

\begin{prop}\label{pr.limitpoint.c1}
The limit points of the points $F(D)$,  where $D\in \operatorname{Cyl}$, and when $D \to A$, $D \to B$, or $D \to C$, are the segments
\begin{equation*}
[\widetilde{B},\widetilde{C}]\subset \operatorname{Lim}(A),\quad
[\widetilde{A},\widetilde{C}]\subset \operatorname{Lim}(B),\quad
[\widetilde{A},\widetilde{B}]\subset \operatorname{Lim}(C),
\end{equation*}
respectively.
\end{prop}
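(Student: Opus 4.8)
The plan is to treat the case $D \to B$ in detail; the cases $D \to A$ and $D \to C$ then follow by the same argument applied to the other two vertices. The starting observation is that the limit of $F(D)$ as $D \to B$ depends on the approach only through the limiting unit direction $\eta = \lim \overrightarrow{BD}/\|\overrightarrow{BD}\|$, and is given explicitly by formula \eqref{eq.lim.point.im}. Since $D$ runs over $\operatorname{Cyl}$, the admissible directions $\eta$ are precisely the unit tangent vectors to $\operatorname{Cyl}$ at $B$ with $(\eta, e_3) \geq 0$; up to a positive multiple these are exactly the vectors $\eta = e_3 + \lambda e_B$, $\lambda \in \mathbb{R}$, together with the two horizontal limiting directions $\pm e_B$.

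First I would substitute $\eta = e_3 + \lambda e_B$ into \eqref{eq.lim.point.im}. Because $\eta_h = \lambda e_B$, $\eta_v = e_3$, and $\|\eta\| = \sqrt{1+\lambda^2}$, this yields
$$
F(D) \to \left(-\frac{\lambda(\tau_2, e_B)}{\sqrt{1+\lambda^2}}, \cos \angle ABC, \frac{\lambda(\tau_1, e_B)}{\sqrt{1+\lambda^2}}\right).
$$
As $\lambda$ ranges over $\mathbb{R}$ the scalar $\lambda/\sqrt{1+\lambda^2}$ ranges over $(-1,1)$, so the right-hand side traces out an open segment in the plane $y = \cos \angle ABC$; its two endpoints are attained in the limits $\lambda \to \pm\infty$, that is, for the horizontal approach directions $\pm e_B$. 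Thus the set of limit points is a closed segment with endpoints $\bigl(-(\tau_2, e_B), \cos \angle ABC, (\tau_1, e_B)\bigr)$ and $\bigl((\tau_2, e_B), \cos \angle ABC, -(\tau_1, e_B)\bigr)$.

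It remains to identify these endpoints with $\widetilde{A}$ and $\widetilde{C}$. The two horizontal limiting directions $\pm e_B$ are precisely the tangent directions at $B$ of the two circular arcs of $S$ through $B$, and these arcs lie in $\operatorname{Cyl} \cap \Pi^0$. Hence the two endpoints are also the limits of $F(D)$ as $D \to B$ along these arcs; but Lemma \ref{le.map.plane.1} gives $F \equiv \widetilde{C}$ on the arc $AB$ and $F \equiv \widetilde{A}$ on the arc $BC$. Therefore the endpoints are exactly $\widetilde{A}$ and $\widetilde{C}$, and the full limit set is the segment $[\widetilde{A}, \widetilde{C}]$. Since all these limits occur as $D \to B$, the segment lies in $\operatorname{Lim}(B)$; alternatively, $\widetilde{A}, \widetilde{C} \in E_B$ by Proposition \ref{pr.closure.plane.1}, so $[\widetilde{A}, \widetilde{C}]$ is a chord of the solid ellipse $\operatorname{Lim}(B)$ and lies in it by convexity.

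The main obstacle is the endpoint identification in the last step: one must be sure that the $\lambda \to \pm\infty$ limits along the cylinder genuinely coincide with the in-plane limits along the arcs of $S$. This rests on the fact, already used to derive \eqref{eq.lim.point.im}, that $\lim F(D)$ depends on the approach only through the limiting direction $\eta$, so that a tangential (horizontal) approach along $\operatorname{Cyl}$ and the curved approach along an arc of $S$ produce the same value. Matching the correct arc to the correct sign of $e_B$ then determines which endpoint equals $\widetilde{A}$ and which equals $\widetilde{C}$, completing the argument.
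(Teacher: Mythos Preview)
Your argument is correct and follows essentially the same route as the paper: restrict the limiting directions $\eta$ to unit tangent vectors of $\operatorname{Cyl}$ at $B$, plug $\eta = e_3 + \lambda e_B$ into \eqref{eq.lim.point.im} to obtain a segment, and then identify the two endpoints by letting $D$ approach $B$ along the two arcs of $S$ (invoking Lemma~\ref{le.map.plane.1}). Your treatment is in fact a bit more explicit than the paper's about the role of the horizontal limits $\lambda \to \pm\infty$ and about the inclusion $[\widetilde{A},\widetilde{C}] \subset \operatorname{Lim}(B)$ via convexity, but these are refinements of the same idea rather than a different approach.
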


\begin{remark}
The case $\lambda=0$ in the above arguments gives the limit of $F(D)$ for $D$ moving on the straight line
orthogonal to the plane $\Pi^0$ through the point $B$. The limit point of $F(D)$ for $D\to B$ moving
as described above is the point $(0,\cos \angle ABC,0)$.
Similar arguments give the limits of $F(D)$ for $D$ moving on the straight lines
orthogonal to the plane $\Pi^0$ through the point $A$ and $C$: the points $(\cos \angle BAC, 0,0)$ and $(0,0, \cos \angle ACB)$ respectively.
\end{remark}

\begin{remark}
If we consider a curve $\Gamma$ on $\operatorname{Cyl}\setminus \{A,B,C\}$, which goes around the point $A$,
with  some points $P$ and $Q$ that are  on the arcs $AC$ and $AB$ respectively, then $F(P)=\widetilde{B}$ and  $F(Q)=\widetilde{C}$.
Therefore, the image of $F(\Gamma)$ is a closed curve passing through the points
$\widetilde{B}$ and  $\widetilde{C}$ (even if $\Gamma$ is in a very small neighborhood of $A$).

If we have a closed curve $\Gamma$, which goes through points in all arcs  $BC$, $AC$, and $AB$, then the image $F(\Gamma)$
is closed and goes through the points $\widetilde{A}$, $\widetilde{B}$, and  $\widetilde{C}$.
\end{remark}

\begin{figure}[t]
\begin{minipage}[h]{0.36\textwidth}
\center{\includegraphics[width=0.89\textwidth]{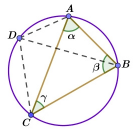}} \\ a)                     \\
\end{minipage}
\quad\quad\quad\quad
\begin{minipage}[h]{0.36\textwidth}
\center{\includegraphics[width=0.89\textwidth]{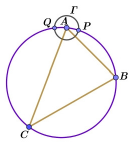}}   \\ b) \\
\end{minipage}
\caption{a) The circumscribed circle $S$; b) A graph of $ \Gamma$.}
\label{Circ}
\end{figure}

\medskip

Let us consider the asymptotics of $F(D)$  when $D=(p,q,r)\in \operatorname{Cyl}$ and $r \to \infty$.
We know that  $F(p,q,r) \to (1,1,1)\in \mathbb{BP}$ for $r \to \infty$ by Lemma \ref{le.infin.1}. But we can say more.

\begin{prop}\label{pr.asym.cyl}
Let us suppose that $A=(1,0,0)$, $B=(\cos \mu, \sin \mu,0)$, \linebreak
$C=(\cos \nu, \sin \nu,0)$, $\mu \neq 0$, $\nu\neq 0$, and $\mu \neq \nu$.
Then we have the following asymptotics for the points $D=(p,q,r) \in \operatorname{Cyl}$:
$$
r^2\Bigl( F(p,q,r) -(1,1,1) \Bigr) \to \bigl(\cos(\mu - \nu)-1, \cos (\nu) -1, \cos (\mu)-1\bigr)
$$
as $r \to \infty$, $p^2+q^2=1$. In this case, the right side of the given asymptotic relation can be considered as a vector
with an initial and final point on the surface $\mathbb{BP}$.
\end{prop}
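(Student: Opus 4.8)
The plan is to compute everything explicitly in the given coordinates and to extract the second-order term of $F(p,q,r)$ as $r\to\infty$ by a Taylor expansion; the geometric content of the statement then reduces to a single clean cancellation, and the concluding remark follows from the parametrization of $\mathbb{BP}$ already established.

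First I would record the relevant distances. Writing $D=(p,q,r)$ with $p^2+q^2=1$, a direct expansion using $A=(1,0,0)$, $B=(\cos\varphi,\sin\varphi,0)$, $C=(\cos\psi,\sin\psi,0)$ gives
\begin{equation*}
z^2=d(D,A)^2=r^2+h_z,\quad y^2=d(D,B)^2=r^2+h_y,\quad x^2=d(D,C)^2=r^2+h_x,
\end{equation*}
where $h_z=(p-1)^2+q^2$, $h_y=(p-\cos\varphi)^2+(q-\sin\varphi)^2$, and $h_x=(p-\cos\psi)^2+(q-\sin\psi)^2$ are bounded ``horizontal'' quantities (on the unit circle one has $h_z=2(1-p)$, and similarly for the others). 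At the same time the base side lengths are chords of the unit circle, so
\begin{equation*}
\overline{x}^2=d(A,B)^2=2(1-\cos\varphi),\quad \overline{y}^2=2(1-\cos\psi),\quad \overline{z}^2=2\bigl(1-\cos(\varphi-\psi)\bigr).
\end{equation*}

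Next I would carry out the asymptotic for each component of $F$ via \eqref{eq.mapF.1}. For $F_3=\cos\overline{\gamma}=\frac{y^2+z^2-\overline{x}^2}{2yz}$, I write $y=r\sqrt{1+h_y/r^2}$ and $z=r\sqrt{1+h_z/r^2}$, so that $2yz=2r^2\bigl(1+\tfrac{h_y+h_z}{2r^2}+O(r^{-4})\bigr)$ while the numerator equals $2r^2+h_y+h_z-\overline{x}^2$. Dividing and expanding,
\begin{equation*}
F_3=\Bigl(1+\tfrac{h_y+h_z-\overline{x}^2}{2r^2}\Bigr)\Bigl(1-\tfrac{h_y+h_z}{2r^2}+O(r^{-4})\Bigr)=1-\frac{\overline{x}^2}{2r^2}+O(r^{-4}).
\end{equation*}
The key point --- and the step I expect to require the most care in making uniform --- is that the $(p,q)$-dependent terms $h_y+h_z$ cancel exactly, leaving only the base side length $\overline{x}^2$; hence $r^2(F_3-1)\to-\overline{x}^2/2=\cos\varphi-1$, independently of the position of $(p,q)$ on the circle. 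The identical computation applied to $F_1=\frac{x^2+y^2-\overline{z}^2}{2xy}$ and $F_2=\frac{x^2+z^2-\overline{y}^2}{2xz}$ yields $r^2(F_1-1)\to-\overline{z}^2/2=\cos(\varphi-\psi)-1$ and $r^2(F_2-1)\to-\overline{y}^2/2=\cos\psi-1$.

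Finally I would assemble the three limits into
\begin{equation*}
r^2\bigl(F(p,q,r)-(1,1,1)\bigr)\to\bigl(-\tfrac{\overline{z}^2}{2},-\tfrac{\overline{y}^2}{2},-\tfrac{\overline{x}^2}{2}\bigr)=\bigl(\cos(\varphi-\psi)-1,\cos\psi-1,\cos\varphi-1\bigr),
\end{equation*}
which is exactly the claim. For the concluding remark, the initial point $(1,1,1)$ satisfies $1+2-3=0$ and so lies on $\mathbb{BP}$, while the final point $(1,1,1)+(\cos(\varphi-\psi)-1,\cos\psi-1,\cos\varphi-1)=(\cos(\varphi-\psi),\cos\psi,\cos\varphi)$ is precisely the image of $(\varphi,\psi)$ under the parametrization \eqref{eq.pilc.param} of Lemma \ref{le.param.pilc}, hence also lies on $\mathbb{BP}$. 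The main difficulty is therefore not conceptual but bookkeeping: controlling the remainder terms in the expansion and verifying the cancellation of horizontal contributions in each of the three components.
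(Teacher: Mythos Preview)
Your proposal is correct and is essentially the same direct computation the paper has in mind: the paper's own proof simply asserts that \eqref{eq.asym.cyl} ``is easy to check'' from \eqref{eq.fmain} and then notes that both $(1,1,1)$ and $(\cos(\varphi-\psi),\cos\psi,\cos\varphi)$ lie on $\mathbb{BP}$. Your Taylor expansion with the cancellation of the $h_y+h_z$ terms is exactly the calculation hidden behind that phrase, and your invocation of Lemma~\ref{le.param.pilc} for the endpoint matches the paper's concluding remark.
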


\begin{proof} Using \eqref{eq.fmain}, it is easy to check that
\begin{equation}\label{eq.asym.cyl}
r^2\Bigl( F(p,q,r) -(1,1,1) \Bigr) \to \bigl(\cos(\mu - \nu)-1, \cos (\nu) -1, \cos (\mu)-1\bigr)
\end{equation}
as $r \to \infty$, $p^2+q^2=1$. Note that the points $(1,1,1)$ and $\bigl(\cos(\mu - \nu), \cos (\nu), \cos (\mu)\bigr)$
are contained in the surface $\mathbb{BP}$. This proves the proposition.
\end{proof}
\smallskip

From this proposition and Proposition \ref{pr.top.cap} we immediately get the following result.

\begin{corollary}\label{cor.cyl.noi}
There is a real number $M$ such that for any $r>M$ the image of any point $D=(p,q,r)\in \operatorname{Cyl}=S\times (-\infty,\infty)$,
$p^2+q^2=1$, is not on the boundary of the closure of $F(\GT)=F\bigl(\Pi^{+}\bigcup \Pi^0 \setminus \{A,B,C\}\bigr)$.
\end{corollary}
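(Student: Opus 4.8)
The plan is to localise the whole question at the point $(1,1,1)=F(\infty)$ and then feed in the two quoted propositions. By Proposition~\ref{pr.asym.cyl} the rescaled difference $r^2\bigl(F(p,q,r)-(1,1,1)\bigr)$ tends, as $r\to\infty$, to the fixed vector $\bigl(\cos(\varphi-\psi)-1,\cos\psi-1,\cos\varphi-1\bigr)$, and this limit is independent of the base point $(p,q)$ on $S$. Consequently $F(p,q,r)\to(1,1,1)$ uniformly in $(p,q)$, so for every neighbourhood $N$ of $(1,1,1)$ there is an $M$ with $F(p,q,r)\in N$ as soon as $r>M$ and $p^2+q^2=1$. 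Moreover, for $r>0$ the point $D=(p,q,r)$ is the apex of a genuine tetrahedron of positive volume, so by \eqref{eq.vol.1} its image $F(p,q,r)$ lies in the interior of $\mathbb{P}$ (and in particular differs from $(1,1,1)$). Since $F(p,q,r)\in F(\GT)\subseteq\CFT$ automatically, the statement ``$F(p,q,r)\notin\BFT$'' is equivalent to ``$F(p,q,r)\in\operatorname{int}(\CFT)$''. Thus it suffices to exhibit a single neighbourhood $N$ of $(1,1,1)$ with $N\cap\mathbb{P}\subseteq\CFT$: for such an $N$ any interior-of-$\mathbb{P}$ point $w\in N$ has a whole ball inside $N\cap\mathbb{P}\subseteq\CFT$, hence lies in $\operatorname{int}(\CFT)$, and the images $F(p,q,r)$ with $r>M$ are precisely such points.

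To produce this $N$ I would first cover the outer boundary near $(1,1,1)$ by means of Proposition~\ref{pr.top.cap}. As $\mathfrak{m}=\max\{\cos\angle BAC,\cos\angle ABC,\cos\angle BCA\}<1$, the cap $\mathcal{C}:=\{(\tilde x,\tilde y,\tilde z)\in\mathbb{BP}\mid\min\{\tilde x,\tilde y,\tilde z\}>\mathfrak{m}\}$ is a genuine neighbourhood of $(1,1,1)$ inside the pillowcase $\mathbb{BP}$, and Proposition~\ref{pr.top.cap} guarantees that each point of $\mathcal{C}\setminus\{(1,1,1)\}$ is $F(D)$ for some $D\in\Pi^0\setminus\{A,B,C\}$, hence belongs to $\CFT$. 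Because $\CFT$ is closed and $(1,1,1)\in\CFT$ by Proposition~\ref{pr.closure.space.1}, the whole cap $\mathcal{C}\cup\{(1,1,1)\}$ lies in $\CFT$; that is, $\CFT$ touches $\partial\mathbb{P}=\mathbb{BP}$ along an entire neighbourhood of $(1,1,1)$.

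The remaining and decisive step is to upgrade this from the two-dimensional cap $\mathcal{C}$ to the full solid corner $N\cap\mathbb{P}$. I would argue by contradiction: if no admissible $N$ existed, then there would be points $w_n\in\operatorname{int}(\mathbb{P})\setminus\CFT$ with $w_n\to(1,1,1)$, the case $w_n\in\mathbb{BP}$ being already excluded by $\mathcal{C}\subseteq\CFT$. Here I would combine the convexity of $\mathbb{P}$ and the corner structure of $\mathbb{BP}$ at $(1,1,1)$ (Proposition~\ref{pr.prop.pc}) with the fact that $\CFT$ is the closure of the image of the connected solid $\Pi^+\cup\Pi^0\cup\{\infty\}$ (a closed $3$-ball, with $\infty$ on its boundary sphere and $F(\infty)=(1,1,1)$) under the continuous extension of $F$; a local analysis of $F$ near $\infty$ should show that the images of the points $D$ with $\|D\|$ large fill a whole solid neighbourhood of $(1,1,1)$ inside $\mathbb{P}$, so that $\operatorname{int}(\mathbb{P})\setminus\CFT$ cannot accumulate at $(1,1,1)$. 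I expect exactly this interior-filling to be the real obstacle: the two cited propositions pin down $\CFT$ only on the surface $\mathbb{BP}$ and along the single asymptotic direction of the cylinder, whereas the conclusion concerns genuinely interior points, so the delicate part is to exclude a thin tongue of the complement $\mathbb{P}\setminus\CFT$ that might reach $(1,1,1)$ through the interior.
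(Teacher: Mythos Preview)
Your plan coincides with the paper's: the authors give no argument at all beyond the sentence ``From this proposition and Proposition~\ref{pr.top.cap} we immediately get the following result.'' So the reduction you carry out---Proposition~\ref{pr.asym.cyl} forces $F(p,q,r)$ into any prescribed neighbourhood of $(1,1,1)$ once $r$ is large, and Proposition~\ref{pr.top.cap} is meant to control $\CFT$ there---is exactly the intended route, and you have unpacked it considerably further than the paper does.

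You are also right that there is a genuine gap, and you have located it precisely. Proposition~\ref{pr.top.cap} only places the two--dimensional cap $\mathcal{C}\subset\mathbb{BP}$ inside $\CFT$, whereas the cylinder images $F(p,q,r)$ with $r>0$ lie strictly in $\operatorname{int}(\mathbb{P})$; passing from the surface cap to a solid corner of $\mathbb{P}$ is not automatic. The paper's ``immediately'' does not address this, so your scepticism about a thin tongue of $\mathbb{P}\setminus\CFT$ reaching $(1,1,1)$ through the interior is entirely justified as a matter of logic.

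The gap can be closed with tools already in hand, and your own suggestion (``a local analysis of $F$ near $\infty$'') is the right one. One clean way: the preimage $D_0\in\Pi^0$ produced in the proof of Proposition~\ref{pr.top.cap} lies in the angle beyond the vertex $C$, hence strictly outside the circumscribed circle $S$. Thus nearby points $(p,q,r)$ with small $r>0$ belong to $\Pi^{+}\setminus\operatorname{Cyl}$, where $F$ is a local diffeomorphism by Theorem~\ref{theo.nondeg}. Since $F$ sends $\Pi^0$ into $\mathbb{BP}$ and $\Pi^{+}$ into $\operatorname{int}(\mathbb{P})$, the map has a standard fold along $\Pi^0\setminus S$, so a half--ball around $D_0$ covers a one--sided $\mathbb{P}$--neighbourhood of $F(D_0)$. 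Varying $F(D_0)$ over the cap~$\mathcal{C}$ and invoking the corner description of $\mathbb{BP}$ at $(1,1,1)$ (Proposition~\ref{pr.prop.pc}) then yields the solid neighbourhood you need. In short: your outline is correct and more careful than the paper's; the missing ingredient to finish it is the observation that the Proposition~\ref{pr.top.cap} preimages avoid $S$, so Theorem~\ref{theo.nondeg} can be used to thicken $\mathcal{C}$ into the interior.
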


\medskip

It is interesting to study some important geometric property of the surface $\operatorname{FC}:=F(\operatorname{Cyl}\bigcap \Pi^{+})$.
The following problem is currently open.

\begin{problem}\label{prob.inject.1}
Is it true that the surface $\operatorname{FC}=F(\operatorname{Cyl}\bigcap \Pi^{+})$ has no self-intersection, i.e., the map $F$ is injective on
the set $\operatorname{Cyl}\bigcap \Pi^{+}$?
\end{problem}

It is clear that $\operatorname{FC}$ is not a closed subset in $\mathbb{R}^3$, see Proposition \ref{pr.limitpoint.c1}.
There is one important, but  not so obvious fact: some points of $\operatorname{FC}$ are not smooth.
We discuss these properties below in this section.

We consider the Cartesian system such that
$$
O=(0,0,0), \quad A=(1,0,0), \quad B=\left(\frac{u^2 - 1}{u^2 + 1},\frac{2u}{u^2 + 1},0\right),
$$
$$
C=\left(\frac{v^2 - 1}{v^2 + 1},\frac{2v}{v^2 + 1},0\right), \quad D=\left(\frac{1-t^2}{t^2 + 1},\frac{2t}{t^2 + 1},r\right),
$$
where $u,v \in \mathbb{R}$ are some constants,  $t\in \mathbb{R}$ and $r>0$ are parameters on $\operatorname{Cyl}\bigcap \Pi^+$.
It is clear that $B=C$ is equivalent to $u=v$, and $u+v=0$ means that $B$ and $C$ are symmetric each to other with respect to the line $OA$.
{\it In what follows, we assume that $u\neq v$.}

Let us consider the following values:
\begin{eqnarray*}\label{eq.simpl.FG1}
P_1&:=&(t^2 + 1)r^2 + 4t^2, \\
P_2&:=&(u^2 + 1)(t^2 + 1)r^2 + 4(tu - 1)^2, \\
P_3&:=&(v^2 + 1)(t^2 + 1)r^2 + 4(tv - 1)^2.
\end{eqnarray*}

Since $\operatorname{FC}=\operatorname{FC}(t,r)=(F_1,F_2,F_3)$, we get that $\operatorname{FC}$ has the following $x$, $y$ and $z$ coordinates:
{\small \begin{eqnarray*}
&&\frac{(v^2 + 1)(u^2 + 1)(t^2 + 1)r^2 + 4(uv + 1)(tv - 1)(tu - 1)}{\sqrt{u^2 + 1}\sqrt{v^2 + 1}\sqrt{P_2}\sqrt{P_3}},\\
&&\frac{(v^2 + 1)(t^2 + 1)r^2 + (2tv - 1)^2 - 1}{\sqrt{v^2 + 1}\sqrt{P_1}\sqrt{P_3}}, \quad
\frac{(u^2 + 1)(t^2 + 1)r^2 + (2tu - 1)^2 - 1}{\sqrt{u^2 + 1}\sqrt{P_1}\sqrt{P_2}}.
\end{eqnarray*}}

We see that $\operatorname{FC}_t^{\prime}:=\partial \operatorname{FC}/{\partial t}$ has the following $x$, $y$ and $z$ coordinates:
{\small \begin{eqnarray*}
&&\frac{16(t + u)(t + v)(u - v)^2r^2((v + u)t^2 + (2uv - 2)t - v - u)}
{\sqrt{u^2 + 1}\sqrt{v^2 + 1}\cdot  P_2^{3/2}
P_3^{3/2}},\\
&&\frac{16r^2(t + v)(t^2 + 2tv - 1)}{\sqrt{v^2 + 1}\cdot  P_1^{3/2} \cdot P_3^{3/2}},\quad
\frac{16r^2(t + u)(t^2 + 2tu - 1)}{\sqrt{u^2 + 1}\cdot  P_1^{3/2} \cdot P_2^{3/2}}.
\end{eqnarray*}}

Note also that $\operatorname{FC}_r^{\prime}:=\partial \operatorname{FC}/{\partial r}$ has the following $x$, $y$ and $z$ coordinates:
{\small\begin{eqnarray*}
&&\frac{4r\Bigl((v^2 + 1)(u^2 + 1)(t^2 + 1)^2r^2 + 4(tv - 1)(tu - 1)\bigl((t^2 - 1)(uv - 1) - 2(v + u)t\bigr)\Bigr)}
{(u - v)^{-2}(t^2 + 1)^{-1}\sqrt{u^2 + 1}\sqrt{v^2 + 1}\cdot P_2^{3/2}\cdot P_3^{3/2}},\\
&&\frac{4r\Bigl((v^2 + 1)(t^2 + 1)^2r^2 + 4t(1-3t^2)v + 4t^2(t^2-1)v^2 + 8t^2\Bigr)}
{(t^2 + 1)^{-1}\sqrt{v^2 + 1}\cdot P_1^{3/2}\cdot P_3^{3/2}},\\
&&\frac{4r\Bigl( (u^2 + 1)(t^2 + 1)^2r^2 + 4t(1-3t^2)u + 4t^2(t^2-1)u^2 + 8t^2 \Bigr)}
{(t^2 + 1)^{-1}\sqrt{u^2 + 1}\cdot P_1^{3/2}\cdot P_2^{3/2}}.
\end{eqnarray*}}

By direct computations, we obtain the following result.

\begin{lemma}\label{le.norm.s}
The cross product $N=N(t,r)$ of the vectors $\operatorname{FC}_t^{\prime}$ and  $\operatorname{FC}_r^{\prime}$ has the following form:
$$
N=\left(\frac{N_1}{\sqrt{P_1}}, \frac{N_1 (t + u)(v - u)}{\sqrt{P_2} \sqrt{u^2 + 1}},
\frac{N_1 (t + v)(u - v)}{\sqrt{P_3} \sqrt{v^2 + 1}}\right),
$$

where
$$
N_1=\frac{64r^3(u - v)(t^2 + 1)^2 \bigl((v + u)t^3 + 3(uv - 1)t^2 -3(u+v)t - uv + 1\bigr)}{\sqrt{(u^2 + 1)(v^2 + 1)\cdot P_1^3 \cdot P_2^3 \cdot P_3^3}}.
$$
\end{lemma}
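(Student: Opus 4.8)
The plan is to establish the formula by directly expanding the cross product $N = \operatorname{FC}_t' \times \operatorname{FC}_r'$ from the component expressions displayed immediately above, arranging the computation so that a common factor emerges and only a few polynomial identities remain. Write $\operatorname{FC}_t' = (X_1, X_2, X_3)$ and $\operatorname{FC}_r' = (Y_1, Y_2, Y_3)$ for those expressions, so that
\begin{equation*}
N = (X_2 Y_3 - X_3 Y_2,\ X_3 Y_1 - X_1 Y_3,\ X_1 Y_2 - X_2 Y_1).
\end{equation*}
First I would record the shape of the data: every numerator is a polynomial in $t, u, v, r$, while every denominator is a monomial in $\sqrt{u^2+1}$, $\sqrt{v^2+1}$ and in $P_1^{1/2}, P_2^{1/2}, P_3^{1/2}$. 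Thus each component of $N$ inherits a predictable denominator, and the problem reduces to determining the polynomial numerators.

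The structural observation I would exploit is that each $X_i$ is proportional to $r^2$, whereas each $Y_j$ is proportional to $r$ times an expression affine in $r^2$. Hence in every component $X_iY_j - X_kY_l$ one can pull out the factor $r^3$ together with the common $(t^2+1)$ and the appropriate radicals, leaving a bracket that is affine (degree one) in $r^2$. For the first component this isolates the prefactor $\tfrac{64 r^3 (t^2+1)}{\sqrt{u^2+1}\,\sqrt{v^2+1}\,P_1^3\,P_2^{3/2}\,P_3^{3/2}}$, and I would claim that the remaining bracket factors as $P_1(u-v)(t^2+1)\,Q(t)$, where
\begin{equation*}
Q(t) := (u+v)t^3 + 3(uv-1)t^2 - 3(u+v)t + (1-uv)
\end{equation*}
is exactly the cubic appearing in $N_1$. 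The analogous factorizations for the other two components produce $P_2$ and $P_3$ (rather than $P_1$) and the extra weights $(t+u)(v-u)$ and $(t+v)(u-v)$; these account precisely for the factors $\tfrac{(t+u)(v-u)}{\sqrt{P_2}\sqrt{u^2+1}}$ and $\tfrac{(t+v)(u-v)}{\sqrt{P_3}\sqrt{v^2+1}}$ in the stated form of $N$.

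Because each bracket is affine in $r^2$, confirming a factorization of the form $\pm P_k(t^2+1)Q(t)$ reduces to matching two coefficients --- that of $r^2$ and the constant term --- as polynomials in $t, u, v$. This yields six polynomial identities. The coefficient of $r^2$ for the first component, for instance, is the compact identity
\begin{equation*}
(t+v)(t^2+2tv-1)(u^2+1) - (t+u)(t^2+2tu-1)(v^2+1) = (u-v)\,Q(t),
\end{equation*}
which one checks by expanding both sides and comparing the coefficients of $t^0, \dots, t^3$; the constant-term identities are bulkier (degree seven in $t$) but of the same routine nature.

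I expect the main obstacle to be the sheer algebraic bookkeeping rather than any conceptual point: the expanded brackets are large, and the fact that the same cubic $Q(t)$ divides all three of them is the one non-obvious feature. The efficient route is to take the factored answer supplied by the statement as an ansatz and verify it through the six coefficient comparisons above, instead of attempting to factor the fully expanded brackets from scratch; a computer algebra system makes the verification immediate, though each identity is accessible by hand.
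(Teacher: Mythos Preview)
Your proposal is correct and follows the same approach as the paper, which simply introduces the lemma with ``By direct computations, we obtain the following result'' and gives no further argument. Your organization of the computation --- extracting the $r^3(t^2+1)$ prefactor, reducing each component to an affine-in-$r^2$ bracket, and then verifying the six resulting polynomial identities (or checking them with a CAS) --- is exactly the kind of direct verification the paper has in mind, only spelled out in more detail.
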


\begin{prop}\label{pro.nondegen.1}
There are three different angles $\theta_1,\theta_2,\theta_3 \in [0,2\pi)$, such that
for all points of the set
$$
M_{\,\operatorname{reg}}:=\Bigl\{(\cos \theta, \sin \theta, r)\,|\, \theta \in [0,2\pi)\setminus \{\theta_1,\theta_2,\theta_3\}, \, r \in (0,\infty)\Bigr\},
$$
the surface $\operatorname{FC}=F(\operatorname{Cyl}\bigcap \Pi^+)$ is smooth.
\end{prop}

\begin{proof}
The surface $F(\operatorname{Cyl}\bigcap \Pi^+)$ is smooth at the point $\operatorname{FC}(t,r)$, if the vectors
$\operatorname{FC}_t^{\prime}(t,r)$ and  $\operatorname{FC}_r^{\prime}(t,r)$ are linearly independent.
The vectors $\operatorname{FC}_t^{\prime}$ and  $\operatorname{FC}_r^{\prime}$ are linearly dependent at a given point if and only if
its cross product $N$ is a zero vector at this point.
Using Lemma \ref{le.norm.s}, we see that $N$ is a zero vector
(excluding the case $u=v$) exactly if $t$ satisfies the equation
\begin{equation}\label{eq.nonsm.1}
(u+v)t^3 + 3(uv-1)t^2 - 3(u+v)t - uv + 1=0.
\end{equation}
It is easy to check that this equation has three real solutions with respect to $t$ if $u+v\neq 0$.
Indeed, in this case the discriminant of this equation with respect to $t$ is positive.
If $u+v=0$, then we have two real solutions and one ``infinite'' solution. Indeed, if we substitute $t=\tan (\theta/2)$ in \eqref{eq.nonsm.1},
we get the solution $\theta= \pi$. In any case, we have three solutions with non-smooth points of the type $(\cos \theta_i, \sin \theta_i, r)$, $i=1,2,3$, $r>0$.
\end{proof}
\smallskip

\begin{remark}
As noted by M.Q. Rieck, if we substitute $u=\tan(\mu/2)$, $v=\tan(\nu/2)$ and $t=\tan (\theta/2)$, then equation \eqref{eq.nonsm.1} factors as follows:
$$
\cos\left(\frac{3\theta+\mu+\nu}{2}\right)\sec^3\left(\frac{\theta}{2}\right)\sec\left(\frac{\mu}{2}\right)\sec\left(\frac{\nu}{2}\right)=0.
$$
This equation has the roots $\theta=\frac{1}{3}\bigl(\pi(2k+1)-\mu -\nu \bigr)$, where $k$ is integer and $k \in \left.\left[\frac{\mu+\nu}{2\pi}-1/2, \frac{\mu+\nu}{2\pi}+5/2\right. \right)$.
\end{remark}

\begin{remark}
It is easy to check that the image under the map $F$ of each ray \linebreak
$\{(\cos \theta_i, \sin \theta_i,r) \mid r \in (0,\infty) \}\subset \operatorname{Cyl}\bigcap \Pi^{+}$, where
$i=1,2,3$, is an ``edge'' of the continuous surface $\operatorname{FC}=F(\operatorname{Cyl}\bigcap \Pi^{+})$, i.e.,
is a continuous curve without smooth points, see
Fig. \ref{Image_Cyl}.
\end{remark}

The surface $\operatorname{FC}=F(\operatorname{Cyl}\bigcap \Pi^{+})$ is $C^{\infty}$-smooth
for all points $(\cos \theta, \sin \theta, r) \in M_{\,\operatorname{reg}}$.
The following result is somewhat unexpected, but very interesting.

\begin{prop}\label{prop.poscurv.1}
For any non-degenerate triangle $ABC$, the surface $\operatorname{FC}=F(\operatorname{Cyl}\bigcap \Pi^{+})$
has positive Gaussian curvature on the set  $M_{\,\operatorname{reg}}$.
\end{prop}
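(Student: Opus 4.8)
The plan is to compute the sign of the Gaussian curvature $K$ of the parametrized surface $\operatorname{FC}=\operatorname{FC}(t,r)$ directly from its first and second fundamental forms, building on the normal vector already found in Lemma~\ref{le.norm.s}. Recall that $N=\operatorname{FC}_t^{\prime}\times \operatorname{FC}_r^{\prime}$ satisfies $|N|^2=EG-F^2$ (Lagrange's identity), where $E,F,G$ are the coefficients of the first fundamental form. Hence, writing $a=\operatorname{FC}_{tt}^{\prime\prime}\cdot N$, $b=\operatorname{FC}_{tr}^{\prime\prime}\cdot N$, and $c=\operatorname{FC}_{rr}^{\prime\prime}\cdot N$, one has
$$
K=\frac{ac-b^2}{|N|^4}.
$$
On $\Omega_{\,\operatorname{reg}}$ the vectors $\operatorname{FC}_t^{\prime}$ and $\operatorname{FC}_r^{\prime}$ are linearly independent by Proposition~\ref{pro.nondegen.1}, so $N\neq 0$ and $|N|^4>0$; thus it suffices to prove that $ac-b^2>0$ on $\Omega_{\,\operatorname{reg}}$.

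The first step that makes the computation manageable is to strip the common scalar factor from $N$. By Lemma~\ref{le.norm.s} we may write $N=N_1\cdot \nu$, where
$$
\nu=\left(\frac{1}{\sqrt{P_1}},\ \frac{(t+u)(v-u)}{\sqrt{P_2}\sqrt{u^2+1}},\ \frac{(t+v)(u-v)}{\sqrt{P_3}\sqrt{v^2+1}}\right).
$$
Since $ac-b^2=N_1^2\bigl((\operatorname{FC}_{tt}^{\prime\prime}\cdot\nu)(\operatorname{FC}_{rr}^{\prime\prime}\cdot\nu)-(\operatorname{FC}_{tr}^{\prime\prime}\cdot\nu)^2\bigr)$, and $N_1\neq 0$ on $\Omega_{\,\operatorname{reg}}$ (its vanishing is exactly equation~\eqref{eq.nonsm.1}, which defines the three excluded rays), the claim is equivalent to
$$
D:=(\operatorname{FC}_{tt}^{\prime\prime}\cdot\nu)(\operatorname{FC}_{rr}^{\prime\prime}\cdot\nu)-(\operatorname{FC}_{tr}^{\prime\prime}\cdot\nu)^2>0.
$$
I would then differentiate once more the explicit formulas for $\operatorname{FC}_t^{\prime}$ and $\operatorname{FC}_r^{\prime}$ to obtain the three second derivatives and form their scalar products with $\nu$; each product is a rational function whose only denominators are positive powers of $\sqrt{P_1},\sqrt{P_2},\sqrt{P_3},\sqrt{u^2+1},\sqrt{v^2+1}$.

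The main obstacle is the final positivity statement for $D$. After clearing the (positive) common denominators, $D$ becomes a rational expression whose numerator is a polynomial in $t,r,u,v$, and the goal is to exhibit it as a manifestly positive quantity. I expect the numerator to carry explicit positive factors — powers of $r$ (positive since $r>0$) and of $(t^2+1)$, the factor $(u-v)^2$ reflecting the standing assumption $u\neq v$, and positive powers of $P_1,P_2,P_3$ — together with a remaining factor that must itself be shown positive for $r>0$, presumably by writing it as a polynomial in $r$ with positive coefficients or as a sum of squares. A useful consistency check is that no factor corresponding to the cubic~\eqref{eq.nonsm.1} may survive, in agreement with the smoothness of $\operatorname{FC}$ on $\Omega_{\,\operatorname{reg}}$. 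To keep the algebra under control I would first exploit the symmetry interchanging the pairs $(u,P_2)$ and $(v,P_3)$, and the substitution $t=\tan(\varphi/2)$ that renders the expressions trigonometric, and if necessary verify the final factorization with computer algebra. This heavy but elementary computation is the only genuinely difficult part; the differential-geometric reduction above is routine.
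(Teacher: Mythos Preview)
Your approach is essentially identical to the paper's: compute $K$ from the first and second fundamental forms of the parametrization $\operatorname{FC}(t,r)$, using the explicit normal $N$ from Lemma~\ref{le.norm.s}, and then verify by direct (computer-algebra) expansion that the numerator $ac-b^2$ is a manifestly nonnegative rational expression that vanishes only on the three excluded rays. The paper carries this out and obtains
\[
(\operatorname{FC}_{tt}^{\prime\prime},N)(\operatorname{FC}_{rr}^{\prime\prime},N)-(\operatorname{FC}_{tr}^{\prime\prime},N)^2
=\frac{2^{20}\, r^{10}(u-v)^6(r^2+4)(t^2+1)^8\,Q(t)^4}{(u^2+1)^2(v^2+1)^2\,P_1^6P_2^6P_3^6},
\]
where $Q(t)=(u+v)t^3+3(uv-1)t^2-3(u+v)t-uv+1$ is exactly the cubic~\eqref{eq.nonsm.1}.

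One correction to your write-up: your ``consistency check'' that no factor of the cubic~\eqref{eq.nonsm.1} should survive in $D$ is wrong. Since $N_1$ carries $Q(t)$ to the first power and the full expression $ac-b^2$ carries $Q(t)^4$, your $D$ still contains $Q(t)^2$. This is harmless for the argument (an even power, hence nonnegative, and strictly positive on $\Omega_{\operatorname{reg}}$), but if you use the absence of $Q$ as a sanity check you will mistakenly conclude that your computation has gone astray.
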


\begin{proof} We are going to find the expression for the  Gaussian curvature $K$ for
$\operatorname{FC}=F(\operatorname{Cyl}\bigcap \Pi^{+})$ for the coordinates $t,r$ as above.
It is well known that $K=D_2/D_1$, where $D_1$ and $D_2$ are respectively the determinants of the first and the second quadratic forms of the
surface $\operatorname{FC}$.
We know that $D_1=(N,N)=\|N\|^2$, where $N$ is the cross product of the vectors $\operatorname{FC}_t^{\prime}$ and  $\operatorname{FC}_r^{\prime}$.
It should be noted that $\|N\|>0$ for all points in $M_{\,\operatorname{reg}}$ by Lemma \ref{le.norm.s}.
For $D_2$ we have the formula (see e.g. \cite[2.4]{Top})
$$
D_2=\frac{(\operatorname{FC}_{t,t}^{\prime\prime},N)}{\|N\|}\frac{(\operatorname{FC}_{r,r}^{\prime\prime},N)}{\|N\|}
-\frac{(\operatorname{FC}_{t,r}^{\prime\prime},N)^2}{\|N\|^2}=
\frac{(\operatorname{FC}_{t,t}^{\prime\prime},N)(\operatorname{FC}_{r,r}^{\prime\prime},N)-(\operatorname{FC}_{t,r}^{\prime\prime},N)^2}{\|N\|^2},
$$
where $\operatorname{FC}_{t,t}^{\prime\prime}$ etc.\ denote the respective second partial derivatives of
$\operatorname{FC}$.
By Lemma \ref{le.norm.s} and the proof of Proposition \ref{pro.nondegen.1}, we get that $\|N(t,r)\|>0$ for all points
$(\cos \theta, \sin \theta, r) \in M_{\,\operatorname{reg}}$, where
$t=\tan (\theta/2)$. On the other hand, the direct computations shows that
\begin{eqnarray*}
(\operatorname{FC}_{t,t}^{\prime\prime},N)(\operatorname{FC}_{r,r}^{\prime\prime},N)-(\operatorname{FC}_{t,r}^{\prime\prime},N)^2=\\
\frac{1048576 r^{10}\bigl((u+v)t^3 + 3(uv-1)t^2 - 3(u+v)t - uv + 1\bigr)^4(u - v)^6(r^2 + 4)(t^2 + 1)^8}
{(u^2 + 1)^2(v^2 + 1)^2\cdot P_1^6 \cdot P_2^6 \cdot P_3^6},
\end{eqnarray*}
where $P_1,P_2, P_3$ are as above. We see that $D_2\geq 0$ and
$D_2=0$ implies $N=N_1=0$,
see Lemma \ref{le.norm.s} and  \eqref{eq.nonsm.1}.
Therefore, $D_2>0$ and $K>0$ for all regular points of $\operatorname{FC}$. This proves the proposition.
\end{proof}

\begin{example}\label{ex.imcylcurv}
By direct computations, we obtain that the Gaussian curvature for the cylinder image in the case, where triangle $ABC$ is regular
$\Bigl(u=\frac{1}{\sqrt{3}}$, $v = -\frac{1}{\sqrt{3}}\Bigr)$, has the following form:
$$
K(t,r)=\frac{(r^2+4)(r^2t^2+r^2+4t^2)^2\bigl((r^2t^2+r^2+t^2+3)^2-12t^2\bigr)^2}
{27r^2\bigl((r^4+5r^2)(t^2+1)^3+4t^6+30t^4+6\bigr)^2}.
$$
It should be noted that $r>0$ and
$(t^2+3)^2-12t^2=(t^2-3)^2\geq 0$.
In this case $M_{\,\operatorname{reg}}=
\bigl\{(\cos \theta, \sin \theta, r)\,|\, \theta \in [0,2\pi)\setminus \{\pi/3,\pi,5\pi/3\}, \, r \in (0,\infty)\bigr\}$.
We note that the images of the points $(1/2, \sqrt{3}/2,r)$, $(-1,0,r)$, $(-1/2, \sqrt{3}/2,r)$ under the map $F$
are non-regular points of the surface $\operatorname{FC}=F(\operatorname{Cyl}\bigcap \Pi^{+})$  for any positive $r$.
\end{example}

Recall that $\operatorname{Cyl}$ is a right cylinder over
the circle $S$ circumscribed around triangle $ABC$.
We know that all points $D\in \operatorname{Cyl}$ are degenerate for the map $F$.
For any point $D\in \operatorname{Cyl}\bigcap \Pi^{+}$, we can find explicitly the direction, where the differential of $F$ vanishes.

Let us consider suitable Cartesian coordinates $(p,q,r)$.
As above, we assume that the origin is the center of circumscribed circle $S$ around $\triangle ABC$ and the radius of $S$ is $1$.
We take the coordinates such that
$A=(1,0,0)$,  $B=(\cos(\alpha),\sin(\alpha),0)$, $C=(\cos(\beta),\sin(\beta),0)$,
where $\alpha\in(0,\pi]$ and $\beta\in(-\pi,0)$.

Points of $\operatorname{Cyl}\bigcap \Pi^{+}$ have the form
$D=(\cos(\varphi),\sin(\varphi),r)$ for some $\varphi \in [0,2\pi)=\mathbb{R}/2\pi \mathbb{Z}$ and $r>0$.
Recall that such points are degenerate for the map $F=(F_1,F_2,F_3)$.

Let us fix $D=(\cos \varphi,\sin \varphi ,z)$ and consider a straight line through this point of the form
\begin{equation}\label{str.line}
p = \cos \varphi +\eta_1 \cdot t, \quad    q = \sin \varphi +\eta_2\cdot  t, \quad    r = z +\eta_3\cdot  t
\end{equation}
for some
vector $\eta=(\eta_1,\eta_2,\eta_3)$ and $t\in \mathbb{R}$.

The following result clarifies the fact that $F$ is degenerate on the cylinder $\operatorname{Cyl}$.

\begin{prop}\label{pr.degen.cyl2}
In the above notation, we have the following asymptotic equality for some $\xi_i \in \mathbb{R}$, $i=1,2,3$:
\begin{equation}\label{eq.degen.1}
F_i\bigl(\cos \varphi+\eta_1\cdot  t, \sin \varphi +\eta_2 \cdot t,z+\eta_3\cdot  t\Bigr)= F_i\bigl(\cos \varphi, \sin \varphi,z\bigr)+ \xi_i\cdot  t^2 +o(t^2),
\end{equation}
when $t \to 0$, where $i=1,2,3$,
{\small
\begin{eqnarray*}
\eta_1=\Bigl((\sin \beta -\sin \alpha)(1+\cos \varphi)+(\cos \alpha -\cos \beta )\sin \varphi \Bigr)z^2\\
-
2\Bigl((\sin \beta-\sin \alpha)(\cos \beta-\cos \varphi)+(\cos \alpha-\cos \beta)(\sin \beta-\sin \varphi)\Bigr)\sin^2 \varphi,\\
\eta_2=\bigl(\cos \alpha -\cos \beta\bigr) \Bigl(\cos \varphi \bigl(2\cos ^2 \varphi +z^2-2\bigr)+z^2\Bigr)\\
+
\sin \varphi \bigl(\sin \alpha-\sin \beta\bigr) \bigl(2\cos^2 \varphi+z^2\bigr)+\sin (\beta-\alpha) \sin (2\varphi),\\
\eta_3=z\bigl(\sin \beta -\sin \alpha \bigr)\bigl(2-\cos \varphi-3\cos^2 \varphi \bigr)\\
-
z\bigl(\cos \alpha -\cos \beta\bigr) \sin \varphi \bigl(3\cos \varphi+1\bigr)+z\sin (\beta-\alpha)(1+\cos \varphi).
\end{eqnarray*} }
Moreover, $\xi_1^2$, $\xi_2^2$, and $\xi_3^2$ are rational functions with respect to $z$, such that the degree of the numerator coincides with the degree of the denominator and is equal to $20$, $16$, $16$, respectively.
\end{prop}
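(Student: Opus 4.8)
The plan is to use that $D=(\cos\varphi,\sin\varphi,z)$ lies on $\operatorname{Cyl}$, so by Proposition~\ref{pr.nondeg.main} it is a degenerate point of $F$; hence the differential $dF_D$ has rank at most $2$ and a nontrivial kernel. I claim the displayed vector $\eta=(\eta_1,\eta_2,\eta_3)$ spans this kernel. Granting this, the one-variable function $g_i(t):=F_i(D+t\eta)$ is real-analytic near $t=0$ (all three distances $d(D,A),d(D,B),d(D,C)$ are positive since $z>0$ and $D\notin\{A,B,C\}$), and its first derivative at $t=0$, namely $\langle\overrightarrow{\nabla}F_i(D),\eta\rangle$, vanishes, so its Taylor expansion begins at order $t^2$. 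This is exactly \eqref{eq.degen.1}, with $\xi_i=\tfrac12\,g_i''(0)=\tfrac12\,\eta^{\top}H_i\eta$, where $H_i$ denotes the Hessian of $F_i$ at $D$.

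First I would verify $\langle\overrightarrow{\nabla}F_i(D),\eta\rangle=0$ for $i=1,2,3$. By \eqref{eq.mapF.2} each $\overrightarrow{\nabla}F_i$ is an explicit linear combination of $\overrightarrow{DA},\overrightarrow{DB},\overrightarrow{DC}$, whose coordinates are read off from $A=(1,0,0)$, $B=(\cos\alpha,\sin\alpha,0)$, $C=(\cos\beta,\sin\beta,0)$, $D=(\cos\varphi,\sin\varphi,z)$. Since the three gradients span at most a plane at a degenerate point, it suffices to verify orthogonality against two linearly independent ones; the third relation is then automatic. In practice the stated $\eta$ is obtained, up to a nonzero scalar that is immaterial for the vanishing of the first-order term, as a cross product $\overrightarrow{\nabla}F_i\times\overrightarrow{\nabla}F_j$, and the normalizations $\cos^2\varphi+\sin^2\varphi=1$ and the analogous identities for $\alpha,\beta$ reduce it to the closed form recorded in the statement.

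It remains to compute $\xi_i$ and to establish the degree assertion. Using $d(D,A)^2=2-2\cos\varphi+z^2$, $d(D,B)^2=2-2\cos(\varphi-\alpha)+z^2$, $d(D,C)^2=2-2\cos(\varphi-\beta)+z^2$, I substitute $D+t\eta$ into \eqref{eq.mapF.1} and expand to second order in $t$. Each $F_i$ has the shape $\text{(polynomial in }t)/\bigl(2\sqrt{R_i(t)}\bigr)$, where $R_i(t)$ is the product of the two squared distances occurring in its denominator; differentiating twice and setting $t=0$ gives $\xi_i=(\text{polynomial in }z)/R_i(0)^{5/2}$. Consequently the radicals cancel after squaring, $\xi_i^2$ is a genuine rational function of $z$, and reducing to lowest terms and counting degrees yields the stated values $20$, $16$, $16$; the drop from $20$ for $i=1$ to $16$ for $i=2,3$ is caused by a common factor (a power of $d(D,A)^2$) shared by numerator and denominator in the latter two cases.

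The main obstacle is the bulk of this last computation. Since $\eta$ is itself quadratic in $z$ (linear in the case of $\eta_3$) with trigonometric coefficients in $\varphi,\alpha,\beta$, forming the second directional derivative, clearing the radicals, reducing, and tracking the exact degrees of the resulting rational functions is lengthy and is most safely carried out with a computer algebra system. By contrast, the vanishing of the linear term is conceptually forced by the degeneracy of $F$ on $\operatorname{Cyl}$ established in Proposition~\ref{pr.nondeg.main}, and requires only the routine verification that the specific $\eta$ lies in the one-dimensional kernel of $dF_D$.
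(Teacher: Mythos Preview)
Your proposal is correct and follows essentially the same approach as the paper: the paper's proof consists of the single observation that one chooses $\eta$ orthogonal to all three gradients $\overrightarrow{\nabla}F_i$ and then appeals to direct computation for the stated formulas and for the degree claims on $\xi_i^2$. Your version elaborates on how one would actually produce $\eta$ (via a cross product of two gradients, the third orthogonality being automatic by degeneracy on $\operatorname{Cyl}$) and on the mechanics of the second-order expansion, but the underlying argument is identical.
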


\begin{proof}
Indeed, it suffices to choose  the vector $\eta$ orthogonal to each gradient
$\nabla F_i$, where $i=1,2,3$, in the coordinates $(p,q,r)$. Direct computations show that this condition is satisfied for the vector $\eta$ as stated
and suitable $\xi_i$, $i=1,2,3$.
\end{proof}
\smallskip

Since in formula \eqref{eq.degen.1} on the right side the coefficient of $t$ is zero and $\xi_i \neq 0$, $i=1,2,3$, (at least) everywhere
on $S\times  (0,\infty)$,
therefore in a neighbourhood of the point $t=0$ the image of the straight line \eqref{str.line} is ``almost glued'' $(t,-t\mapsto t^2)$.
Locally $F$ acts as the map $(x,y,z)\mapsto (x^2,y,z)$ for small $|x|$ in suitable coordinates. Therefore, $F$
is not locally injective and is not locally surjective at such points $D\in \operatorname{Cyl}$.
In particular, we get

\begin{corollary}\label{cor.cyl.deg}
The image of a point $D\in \operatorname{Cyl}\bigcap \Pi^{+}$ under the map $F$ is a boundary point of $F(\operatorname{GT})$ in the case when
$F^{-1}(F(D))=\{D\}$.
\end{corollary}

\begin{proof} It follows from the above discussion that $F$ is not
locally surjective at $D\in \operatorname{Cyl}\bigcap \Pi^{+}$.
If $Q=F(D)$ and $F^{-1}(F(D))=\{D\}$, then $Q$ cannon be an interior point of $F(\operatorname{GT})$.
\end{proof}

Now we are going to get an interesting addition to Proposition \ref{pr.limitpoint.2}.

\begin{prop}\label{pr.limitpoint.3}
Suppose that the triangle $ABC$ is acute-angled. Then there is a point $D$ inside the cylinder  $\operatorname{Cyl}$
such that $F(D)=\Bigl(\cos \angle BAC, \cos \angle ABC, \cos \angle ACB\Bigr)$.
\end{prop}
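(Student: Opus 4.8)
The plan is to exhibit $D$ explicitly rather than argue abstractly. First I would translate the three required angle equalities into distance coordinates. By the definition \eqref{eq.mapF.1} of $\overline{F}$, the requirement $F(D)=\bigl(\cos\angle BAC,\cos\angle ABC,\cos\angle ACB\bigr)$ reads $\frac{x^2+y^2-\overline{z}^2}{2xy}=\cos\angle BAC$ together with the two cyclic analogues, where $x=d(D,C)$, $y=d(D,B)$, $z=d(D,A)$. Comparing these with the law of cosines for the base (for instance $\overline{z}^2=\overline{x}^2+\overline{y}^2-2\,\overline{x}\,\overline{y}\cos\angle BAC$), one sees at once that the choice $x=\overline{x}$, $y=\overline{y}$, $z=\overline{z}$ — that is, $d(D,C)=d(A,B)$, $d(D,B)=d(A,C)$, $d(D,A)=d(B,C)$ — solves all three equations simultaneously. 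So the natural candidate for $D$ is the apex of the \emph{isosceles tetrahedron} on the base $ABC$, the one in which opposite edges are equal; evaluating $\overline{F}$ at the distance coordinates $(\overline{x},\overline{y},\overline{z})$ then returns exactly $\bigl(\cos\angle BAC,\cos\angle ABC,\cos\angle ACB\bigr)$ by three applications of the law of cosines.

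Next I would check that this $D$ genuinely exists as a non-degenerate tetrahedron precisely because $ABC$ is acute. The cleanest route is to inscribe the isosceles tetrahedron in a rectangular parallelepiped with edge lengths $p,q,s$ satisfying $p^2+q^2=\overline{x}^2$, $p^2+s^2=\overline{y}^2$, $q^2+s^2=\overline{z}^2$; solving gives $p^2=\tfrac12(\overline{x}^2+\overline{y}^2-\overline{z}^2)$ and cyclically, and these are positive exactly when all three angles of $ABC$ are acute. (Equivalently, one verifies via Proposition \ref{pr.CM.1} that the Cayley--Menger determinant of the sextuple $(\overline{x},\overline{y},\overline{z},\overline{x},\overline{y},\overline{z})$ is positive.) In particular the apex has strictly positive height $h>0$ above the base plane $\Pi^0$.

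The remaining, and slightly less obvious, point is that $D$ lies strictly inside $\operatorname{Cyl}$, i.e. that the orthogonal projection $D'$ of $D$ onto $\Pi^0$ is interior to the circumscribed circle $S$ (of radius $R$, centred at the circumcentre $O$). Here I would exploit the special symmetry of the isosceles tetrahedron: its circumcentre coincides with its centroid, both being the centre of the circumscribed box. Placing the base in $\Pi^0=\{z=0\}$ and writing $D=(D',h)$, the circumcentre $O_3$ has height $c=\tfrac14(0+0+0+h)=h/4$ and projects onto $O$; since the circumsphere meets $\Pi^0$ along $S$ we have $R_3^2=R^2+c^2$ with $R_3=|O_3D|$. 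Combining this with $|O_3D|^2=|OD'|^2+(h-c)^2$ gives
\begin{equation*}
|OD'|^2=R^2+2ch-h^2=R^2-\tfrac12 h^2<R^2,
\end{equation*}
so that $D$ is strictly inside the cylinder.

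The main obstacle is simply recognizing the right candidate: once one notices that the three target equations are solved by equating each edge from $D$ to the opposite base edge (the isosceles tetrahedron), everything else is a short verification. The acuteness hypothesis enters in exactly two places — the non-degeneracy, through the positivity of $p^2,q^2,s^2$, and implicitly the fact that the target point lies in $\operatorname{int}\mathbb{P}$ recorded in Proposition \ref{pr.limitpoint.2} — whereas the containment in $\operatorname{Cyl}$ follows cleanly and uniformly from the identity circumcentre $=$ centroid.
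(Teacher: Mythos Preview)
Your proof is correct and reaches the same point $D$ as the paper, but by a genuinely different and more conceptual route. The paper works entirely in Cartesian coordinates: it parametrizes the circumscribed circle by $A=(1,0,0)$, $B$ and $C$ via Weierstrass substitutions $u,v$, writes down explicit coordinates $(p_0,q_0,r_0)$ for $D$, and verifies by direct computation both that $F(D)$ hits the target triple and that $p_0^2+q_0^2<1$. In particular the paper never identifies \emph{what} this $D$ is geometrically, nor explains how the formulas were found.

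Your approach, by contrast, recognizes from the outset that the distance equations force $d(D,A)=d(B,C)$, $d(D,B)=d(A,C)$, $d(D,C)=d(A,B)$, so $D$ is the apex of the isosceles (orthocentric-box) tetrahedron on the base. Existence for acute bases then follows from the classical box realization, and the containment $|OD'|^2=R^2-\tfrac12 h^2<R^2$ drops out of the well-known identity ``circumcentre $=$ centroid'' for such tetrahedra. This buys you a clean explanation for the paper's otherwise mysterious relation $p_0^2+q_0^2-1=-r_0^2/2$ (which is exactly your $|OD'|^2-R^2=-h^2/2$), and it makes transparent why acuteness is precisely the hypothesis needed. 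The paper's computational approach, on the other hand, delivers explicit coordinates with no appeal to external facts about isosceles tetrahedra, which fits better with its overall coordinate-based machinery.
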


\begin{proof}
Without loss of generality, we can suppose that
$A=(1,0,0)$, \linebreak
$B=\left(\frac{1-u^2}{1+u^2},\frac{2u}{1+u^2},0\right)$, and $C=\left(\frac{1-v^2}{1+v^2},\frac{2v}{1+v^2},0\right)$
for some non-zero real numbers $u\neq v$. It is easy to check that
\begin{eqnarray*}
\Bigl(\cos \angle BAC, \cos \angle ABC, \cos \angle ACB\Bigr)\\=
\left(\frac{(1+uv)\sgn(uv)}{\sqrt{(1+u^2)(1+v^2)}},  \frac{\sgn((u - v)u)}{\sqrt{1+v^2}}, -\frac{\sgn((u - v)v)}{\sqrt{1+u^2}}\right).
\end{eqnarray*}
In particular, we get
\begin{equation}\label{rq.cos.prod}
\cos \angle BAC \cdot \cos \angle ABC \cdot \cos \angle ACB=-\frac{1+uv}{(1+u^2)(1+v^2)}
\end{equation}
Therefore, the triangle $ABC$ is acute-angled if and only if $uv+1 <0$.
Let us suppose that $uv+1 <0$ and let us consider the point $D$ with the following coordinates:
$$
p_0=\frac{u^2v^2-u^2-v^2-3}{(1+u^2)(1+v^2)},\quad
q_0=\frac{-2(u+v)(1+uv)}{(1+u^2)(1+v^2)},\quad
r_0=\sqrt{\frac{-16(1+uv)}{(1+u^2)(1+v^2)}}.
$$
Ii is easy to check that
\begin{eqnarray*}
F(D)=\Bigl(\cos \angle BDC, \cos \angle ADC, \cos \angle ADB\Bigr)\\=
\left(\frac{(1+uv)\sgn(uv)}{\sqrt{(1+u^2)(1+v^2)}},  \frac{\sgn((u - v)u)}{\sqrt{1+v^2}}, -\frac{\sgn((u - v)v)}{\sqrt{1+u^2}}\right)\\
=\Bigl(\cos \angle BAC, \cos \angle ABC, \cos \angle ACB\Bigr).
\end{eqnarray*}
Moreover, $p_0^2+q_0^2-1=\frac{8(1+uv)}{(1+u^2)(1+v^2)}<0$, hence, $D$ is inside the cylinder $\operatorname{Cyl}$.
This proves the proposition.
\end{proof}

\begin{remark}
It should be noted that the point $D=(p_0,q_0,r_0)$ in the proof of Proposition \ref{pr.limitpoint.3} is above the {\it de Longchamps point} for $\triangle ABC$,
which is the reflection of the orthocenter of $\triangle ABC$ about the circumcenter.
The see this, note that as vectors, $A+D$, $B+D$, $C+D$ are orthogonal to $B-C$,
$C-A$, $A-B$, respectively.
\end{remark}

\begin{corollary}\label{cor.com.point}
If the triangle $ABC$ is acute-angled, then the point
$$
\Bigl(\cos \angle BAC, \cos \angle ABC, \cos \angle ACB\Bigr)
$$
is contained in the interior of the set $\CFT$ {\rm(}the closure of $F(\GT)${\rm)}.
\end{corollary}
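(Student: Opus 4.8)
The plan is to read off the desired point as the image, under $F$, of a concrete point lying strictly \emph{inside} the solid cylinder, and then to invoke the non-degeneracy of $F$ off the cylinder surface to upgrade ``image point'' to ``interior point of $\CFT$''. First I would apply Proposition \ref{pr.limitpoint.3}: under the acute-angled hypothesis it produces an explicit point $D=(p_0,q_0,r_0)$ with $r_0>0$ and $F(D)=\bigl(\cos \angle BAC, \cos \angle ABC, \cos \angle ACB\bigr)$. The decisive feature of this $D$ is not just that it is a preimage, but \emph{where} it sits: the final line of that proof gives $p_0^2+q_0^2-1=\frac{8(1+uv)}{(1+u^2)(1+v^2)}<0$ precisely because acuteness means $uv+1<0$. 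Hence $D\in\Pi^+$ and $D\notin \operatorname{Cyl}$, i.e. $D\in \Pi^+\setminus \operatorname{Cyl}$.

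Second, I would invoke Theorem \ref{theo.nondeg}, which states that $F$ is non-degenerate at every point of $\Pi^+\setminus \operatorname{Cyl}$. Since $\Pi^+$ is open and $\operatorname{Cyl}$ is closed, the set $\Pi^+\setminus \operatorname{Cyl}$ is open in $\mathbb{R}^3$, and the full-rank differential $dF_D$ together with the inverse function theorem shows that $F$ is a local diffeomorphism near $D$; in particular $F$ is an open map there. Theorem \ref{theo.nondeg} indeed records this conclusion directly: $F\bigl(\Pi^+\setminus \operatorname{Cyl}\bigr)$ is an open subset of $\CFT$. As $F(D)$ lies in this open set and $F\bigl(\Pi^+\setminus \operatorname{Cyl}\bigr)\subset F(\GT)\subset \CFT$, the point $F(D)=\bigl(\cos \angle BAC, \cos \angle ABC, \cos \angle ACB\bigr)$ is an interior point of $\CFT$, which is exactly the assertion.

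I expect essentially no serious obstacle here, since the two preceding results do all the heavy lifting; the corollary is really an assembly step. The only point requiring genuine care is the verification that the explicit $D$ of Proposition \ref{pr.limitpoint.3} lies off the cylinder surface, so that Theorem \ref{theo.nondeg} is applicable at $D$. This is where the acute-angled hypothesis is truly used: it is exactly the condition $uv+1<0$ that forces $p_0^2+q_0^2<1$, placing the preimage strictly inside the cylinder rather than on it. Had $D$ landed on $\operatorname{Cyl}$, the differential would be degenerate and $F(D)$ could well be a boundary point (compare Corollary \ref{cor.cyl.deg}), so the strictness of this inequality is the load-bearing detail of the argument.
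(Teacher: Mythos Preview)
Your proof is correct and follows essentially the same approach as the paper: invoke Proposition~\ref{pr.limitpoint.3} to obtain a preimage $D$ strictly inside the cylinder, then apply Theorem~\ref{theo.nondeg} to conclude that $F$ is a local diffeomorphism at $D$, so $F(D)$ lies in the interior of $\CFT$. The paper's argument is just a more compressed version of what you wrote.
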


\begin{proof}
We know that $\Bigl(\cos \angle BAC, \cos \angle ABC, \cos \angle ACB\Bigr)=F(D)$ for some point $D$  inside the cylinder  $\operatorname{Cyl}$,
see Proposition \ref{pr.limitpoint.3}. On the other hand, $D$ is not critical point of the map $F$, see Theorem \ref{theo.nondeg}.
Therefore, $F$ maps some neighborhood of $D$ to some neighborhood of the point $F(D)$, that proves the corollary.
\end{proof}

\bigskip

\section{Mappings of special parts of the cylinder  $\Cyl$}\label{sec.cyl.special}

In this section, we consider some properties of the images of some special parts of the cylinder $\operatorname{Cyl}$ and some related parts of $\Pi^{+}$.
This results help us to describe the boundary of the closure of $F(\GT)=\Sigma(\triangle ABC)$ in the final section.

\subsection{On special regions on the cylinder  $\Cyl$}\label{subs.8.1}
We will say that a toroid is {\it special} if it is built on some base side ($KL$ in \eqref{eq.toroid.s.1}) with
an angle equal to the base angle at the opposite vertex.

If this opposite angle is straight or obtuse, then there is no intersection of the toroid with  $\operatorname{Cyl}\bigcap \Pi^+$.
If the angle is acute, then the intersection of the toroid and  $\operatorname{Cyl}\bigcap \Pi^+$ is a curve, whose orthogonal projection to $\Pi^0$
is an arc of the circle $S$ of measure $\pi$.

For a fixed $\alpha_1=\angle BAC$,  $\alpha_2=\angle ABC$ and $\alpha_3=\angle ACB$
we define curves $\gamma_{BC}$, $\gamma_{AC}$ and $\gamma_{AB}$ as follows (if some $\alpha_i \geq \pi/2$, then the corresponding curve is empty):
\begin{equation}\label{eq.spes.curves}
\gamma_{BC}=\mathbb{T}_{BC}^{\,\alpha_1}\cap\operatorname{Cyl}, \quad
\gamma_{AC}=\mathbb{T}_{AC}^{\,\alpha_2}\cap\operatorname{Cyl}, \quad
\gamma_{AB}=\mathbb{T}_{AB}^{\,\alpha_3}\cap\operatorname{Cyl}.
\end{equation}

\begin{figure}[t]
\begin{minipage}[h]{0.36\textwidth}
\center{\includegraphics[width=0.89\textwidth]{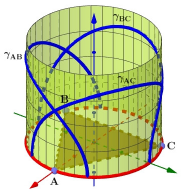}} \\ a)                     \\
\end{minipage}
\quad\quad
\begin{minipage}[h]{0.36\textwidth}
\center{\includegraphics[width=0.89\textwidth]{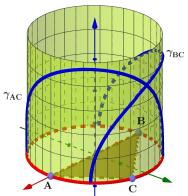}}   \\ b) \\
\end{minipage}
\quad\quad
\begin{minipage}[h]{0.36\textwidth}
\center{\includegraphics[width=0.89\textwidth]{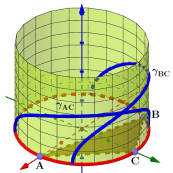}}   \\ c) \\
\end{minipage}
\caption{Special regions on Cyl for: a) an acute-angled $\triangle ABC$; b) a right-angled $\triangle ABC$; c) an obtuse-angled $\triangle ABC$.}
\label{Cyl}
\end{figure}

\begin{definition}\label{def.spec.reg}
We will call a region of $\operatorname{Cyl}$ bounded by any two of the curves $\gamma_{BC}$, $\gamma_{AC}$, $\gamma_{AB}$ and by the plane $\Pi^0$,
a {\it special region} of the cylinder $\operatorname{Cyl}$.
\end{definition}

We are going to derive an explicit equation of the curve $\gamma_{BC}$ for an acute angle $\alpha_1$
(similar formulas exist for the curves $\gamma_{AC}$ and $\gamma_{AB}$).
Since $\angle BAC=\alpha_1$, therefore $\angle BOC=2\alpha_1$, where $O$ is the center of the circle $S$ circumscribed around the triangle $ABC$.
Without loss of generality, we assume that the radius of $S$ is $1$.

\begin{prop}\label{pr.crit.tor.cyl}
Suppose that $\alpha_1 <\pi/2$ and the Cartesian coordinates are chosen such that $O=(0,0,0)$,  $B=\bigl(\cos(\alpha_1),\sin(\alpha_1),0\bigr)$,
and $C=\bigl(\cos(\alpha_1),-\sin(\alpha_1),0\bigr)$.
Then a point $D=(\cos(\varphi),\sin(\varphi),z) \in \operatorname{Cyl}$, where $z>0$, is contained in the toroid $\mathbb{T}_{BC}^{\,\alpha_1}$
if and only if $\varphi\in(-\pi/2,\pi/2)$ and $z= 2\sqrt{\cos(\alpha_1) \cos(\varphi)}$. Moreover,
the curve
$$
\gamma_{BC}(\varphi)=\left(\cos(\varphi),\sin(\varphi),2\sqrt{\cos(\alpha_1) \cos(\varphi)}\right)
$$
can be continuously extended to the closed interval
$[-\pi/2,\pi/2]$, where $\gamma_{BC}(-\pi/2)=(0,-1,0)$ and $\gamma_{BC}(\pi/2)=(0,1,0)$.
\end{prop}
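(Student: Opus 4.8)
The plan is to reduce the geometric membership condition $D\in\mathbb{T}_{BC}^{\,\alpha_1}$, i.e. $\angle BDC=\alpha_1$, to a single polynomial equation in $z^2$ and then analyze that equation. First I would abbreviate $c=\cos\alpha_1$, $s=\sin\alpha_1$, $p=\cos\varphi$, $q=\sin\varphi$, and $w=z^2$; note that $\alpha_1\in(0,\pi/2)$ gives $c>0$ and $s>0$, and that $p^2+q^2=1$, $c^2+s^2=1$. Writing $D=(p,q,z)$, $B=(c,s,0)$, $C=(c,-s,0)$, I would substitute these into the toroid equation \eqref{eq.toroid.s.2} with $K=B$, $L=C$, $M=D$, so that membership becomes the scalar equation $\cos\angle BDC=c$.

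Next I would simplify the three building blocks. Using $p^2+q^2=1$, $c^2+s^2=1$ and the identities $cp+sq=\cos(\alpha_1-\varphi)$, $cp-sq=\cos(\alpha_1+\varphi)$, the numerator $\overrightarrow{DB}\cdot\overrightarrow{DC}$ collapses to $2c(c-p)+w$, while
\[
\|\overrightarrow{DB}\|^2=2-2cp-2sq+w,\qquad \|\overrightarrow{DC}\|^2=2-2cp+2sq+w .
\]
Thus the condition reads $2c(c-p)+w=c\,\sqrt{\|\overrightarrow{DB}\|^2\,\|\overrightarrow{DC}\|^2}$.

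Then I would square and substitute $T:=2-2cp+w$. Writing the numerator as $T-2s^2$ and the product of the squared norms as $T^2-4s^2q^2$, the equation becomes $(T-2s^2)^2=c^2(T^2-4s^2q^2)$; after cancelling a factor $s^2>0$ this reduces to the quadratic
\[
T^2-4T+4\bigl(s^2+c^2q^2\bigr)=0 .
\]
Its discriminant equals $16\bigl(1-s^2-c^2q^2\bigr)=16c^2p^2$, so $T=2\pm 2c|p|$ and hence $w=2cp\pm 2c|p|$. Imposing $w=z^2>0$ together with $c>0$ forces $p>0$, i.e. $\varphi\in(-\pi/2,\pi/2)$, and selects the branch $w=4cp$, that is $z=2\sqrt{\cos\alpha_1\cos\varphi}$; the other root $w=0$ corresponds to $z=0$ and is excluded, as are the cases $p\le 0$ (which give only $w\le 0$).

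The delicate point — the main thing to get right, rather than the main difficulty — is the sign bookkeeping that accompanies squaring: I would verify that on the retained branch the numerator equals $2c(c+p)>0$, so it genuinely matches the positive sign of $c\sqrt{\cdots}$ and no extraneous solution is introduced, and I would record that dividing by $s^2$ is legitimate precisely because $s=\sin\alpha_1\neq 0$. Finally, for the extension to $[-\pi/2,\pi/2]$, I would let $\varphi\to\pm\pi/2$: then $\cos\varphi\to 0$, so $z=2\sqrt{c\cos\varphi}\to 0$ and $\gamma_{BC}(\varphi)\to(0,\pm1,0)$, which furnishes the continuous extension with the stated endpoints.
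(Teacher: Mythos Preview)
Your argument is correct and follows essentially the same route as the paper: both square the toroid condition $\bigl(\overrightarrow{DB},\overrightarrow{DC}\bigr)=\cos(\alpha_1)\,\|DB\|\,\|DC\|$ and analyze the resulting polynomial identity. The only cosmetic difference is that the paper factors the difference directly as
\[
\bigl(\overrightarrow{DB},\overrightarrow{DC}\bigr)^2-\cos^2(\alpha_1)\,\|DB\|^2\,\|DC\|^2=\sin^2(\alpha_1)\,z^2\bigl(z^2-4\cos(\alpha_1)\cos(\varphi)\bigr),
\]
whereas you reach the same conclusion by solving a quadratic in $T=2-2cp+w$; your discriminant computation and the paper's factorization are equivalent (indeed $T^2-4T+4(s^2+c^2q^2)=w(w-4cp)$). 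Your explicit sign check that $T-2s^2=2c(c+p)>0$ on the retained branch is a nice touch: the paper handles the converse direction by a bare ``direct computations'' remark, so your version is slightly more self-contained on this point.
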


\begin{proof} Note that $A$ lies on the longer arc $BC$ of the circle $S$ under the hypotheses of the proposition.
Recall that $D\in \mathbb{T}_{BC}^{\,\alpha_1}$ if and only if $\left(\overrightarrow{DB},\overrightarrow{DC}\right)=\|DB\|\cdot \|DC\|\cdot \cos (\alpha_1)$.
By direct computations, we have
\begin{equation*}
\left(\overrightarrow{DB},\overrightarrow{DC}\right)^2 - \cos^2(\alpha_1)\cdot \|DB||^2 \cdot \|DC\|^2=
\sin^2(\alpha_1)\cdot z^2 \cdot \bigl(z^2 - 4\cos(\alpha_1)\cdot \cos(\varphi)\bigr).
\end{equation*}
Therefore, we have only one possibility:
$z= 2\sqrt{\cos(\alpha_1) \cos(\varphi)}$ and $\varphi\in(-\pi/2,\pi/2)$ due to $\cos(\alpha_1)>0$. Direct computations shows that
any point $D=(\cos(\varphi),\sin(\varphi),z)$ with $\varphi\in(-\pi/2,\pi/2)$ and $z= 2\sqrt{\cos(\alpha_1) \cos(\varphi)}$
actually belongs to the toroid $\mathbb{T}_{BC}^{\,\alpha_1}$. The last assertion is obvious. The proposition is proved.
\end{proof}

\begin{figure}[t]
\begin{minipage}[h]{0.3\textwidth}
\center{\includegraphics[width=0.89\textwidth]{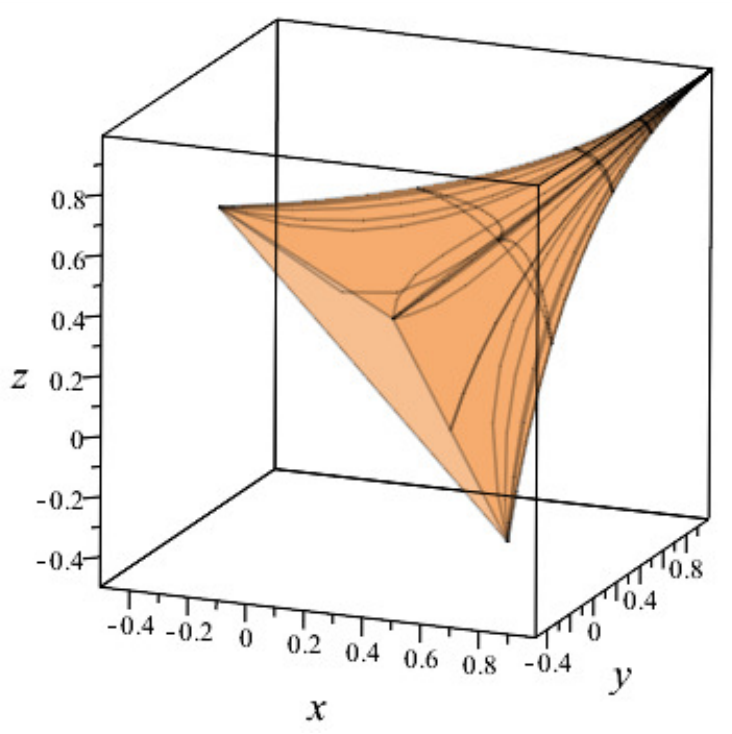}} \\ a)                     \\
\end{minipage}
\begin{minipage}[h]{0.3\textwidth}
\center{\includegraphics[width=0.89\textwidth]{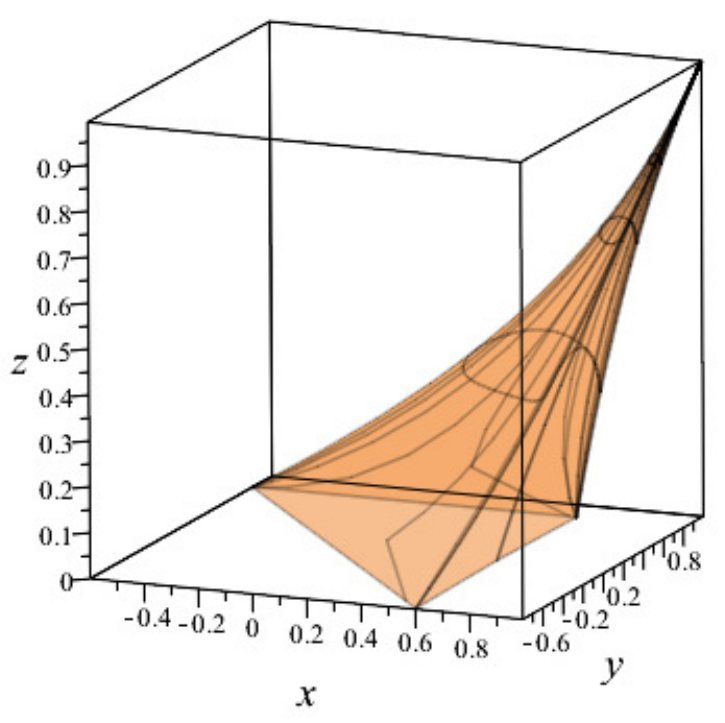}}   \\ b) \\
\end{minipage}
\begin{minipage}[h]{0.3\textwidth}
\center{\includegraphics[width=0.89\textwidth]{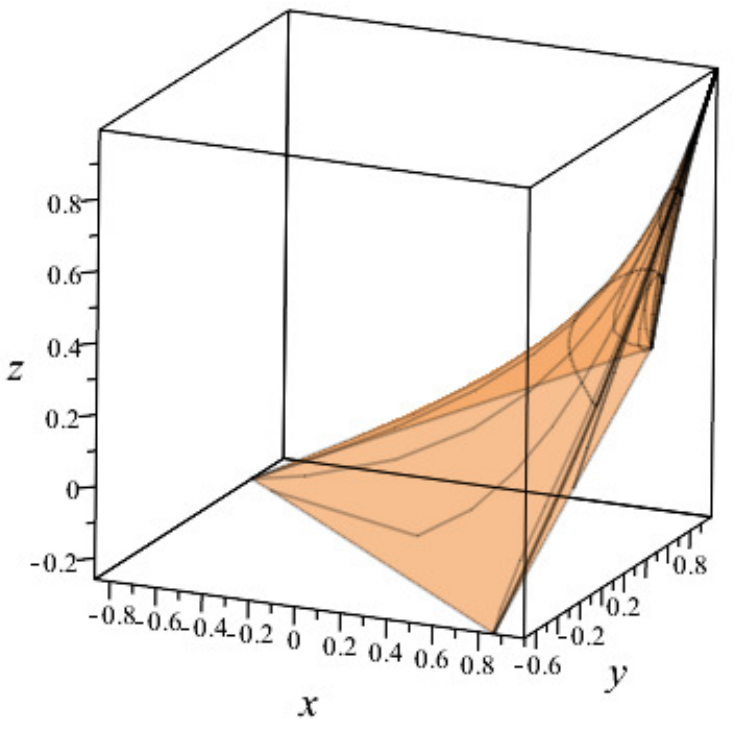}}   \\ c) \\
\end{minipage}
\caption{The image of  $\operatorname{Cyl}\bigcap \Pi^{+}$ under the map $F$ for:
a) an acute-angled $\triangle ABC$; b) a right-angled $\triangle ABC$; c) an obtuse-angled $\triangle ABC$.}
\label{Image_Cyl}
\end{figure}

\begin{figure}[t]
\begin{minipage}[h]{0.3\textwidth}
\center{\includegraphics[width=0.89\textwidth]{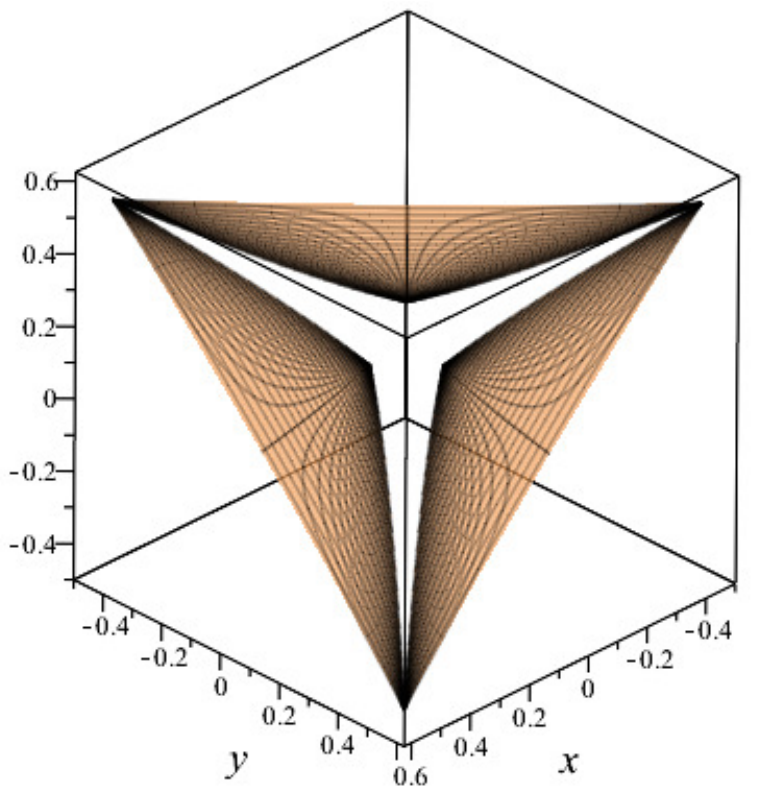}} \\ a)                     \\
\end{minipage}
\begin{minipage}[h]{0.3\textwidth}
\center{\includegraphics[width=0.89\textwidth]{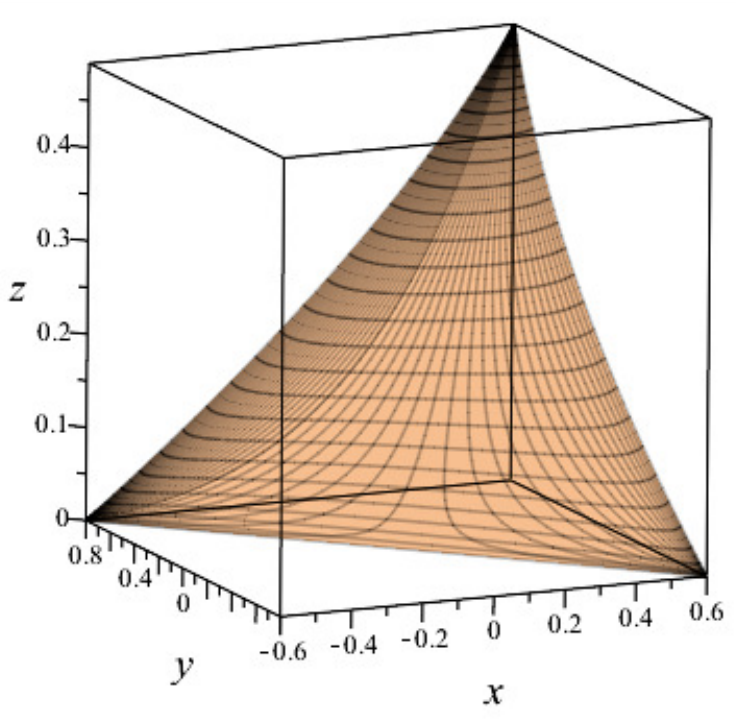}}   \\ b) \\
\end{minipage}
\begin{minipage}[h]{0.3\textwidth}
\center{\includegraphics[width=0.89\textwidth]{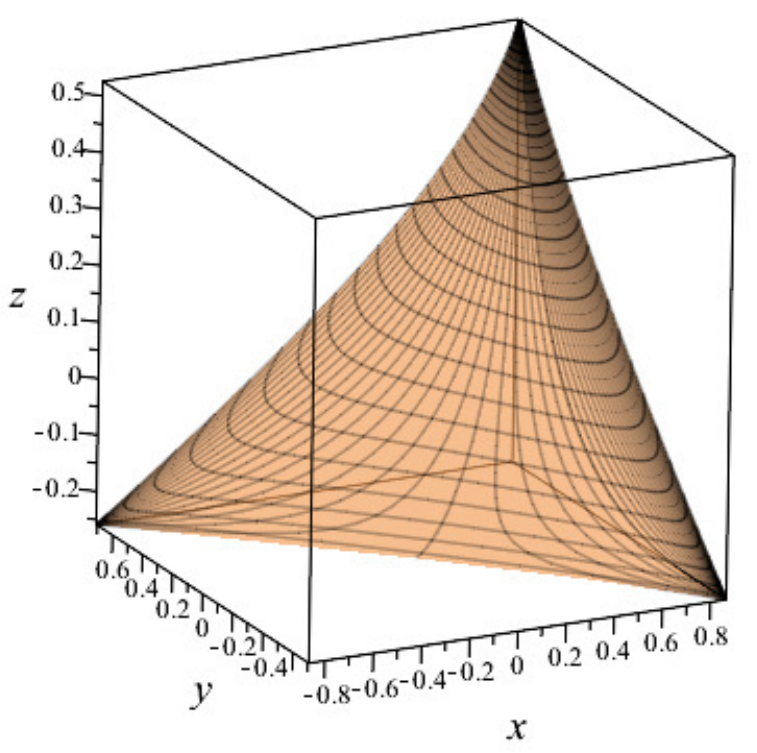}}   \\ c) \\
\end{minipage}
\caption{The part of the image of $\operatorname{Cyl}\bigcap \Pi^{+}$ (under the map $F$) lying on  $\BFT$ for:
a) an acute-angled $\triangle ABC$; b) a right-angled $\triangle ABC$; c) an obtuse-angled $\triangle ABC$.}
\label{Part_image_Cyl}
\end{figure}

\begin{remark}\label{re.crit.tor.cyl}
If the Cartesian coordinates are chosen such that $O=(0,0,0)$,  $B=\bigl(\cos(\theta+2\alpha_1),\sin(\theta+2\alpha_1),0\bigr)$,
and $C=\bigl(\cos(\theta),\sin(\theta),0\bigr)$ for some constant $\theta \in \mathbb{R}$, then the curve  $\gamma_{BC}$ has parametrization as follows:
$z= 2\sqrt{\cos(\alpha_1) \cos(\varphi-\theta-\alpha_1)}$, $\varphi\in(-\pi/2+\theta+\alpha_1,\pi/2+\theta+\alpha_1)$.
To prove this result, it suffices to rotate the plane $ABC$ to the angle
$\theta+\alpha_1$ around $O$ and apply Proposition \ref{pr.crit.tor.cyl}.
\end{remark}

Now we consider an explicit description of a special region of the cylinder $\operatorname{Cyl}$, which is bounded by
the curves $\gamma_{AC}$, $\gamma_{AB}$ and by the plane $\Pi^0$ (here we assume that $\alpha_2, \alpha_3 \in (0,\pi/2)$).
The case when a special region is bounded by another pair of curves is considered similarly.

\begin{prop}\label{pr.crit.tor.cyl.2}
Suppose that the Cartesian coordinates are chosen such that $O=(0,0,0)$, $A=(1,0,0)$, $B=\bigl(\cos(2\alpha_3),\sin(2\alpha_3),0\bigr)$,
and $C=\bigl(\cos(2\alpha_2),-\sin(2\alpha_2),0\bigr)$. Then the corresponding special region
is such that $\varphi \in [\alpha_3-\pi/2, -\alpha_2+\pi/2]\subset(-\pi/2, \pi/2)$ and
\begin{eqnarray*}
0\leq  z \leq 2\sqrt{\cos(\alpha_3) \cos(\varphi-\alpha_3)}&& \mbox{for\,\,\,}   \varphi \in [\alpha_3-\pi/2, \varphi_0], \\
0\leq  z \leq 2\sqrt{\cos(\alpha_2) \cos(\varphi+\alpha_2)}&& \mbox{for\,\,\,}   \varphi \in [\varphi_0, -\alpha_2+\pi/2],
\end{eqnarray*}
where
$\varphi_0=\arctan \left(\frac{\cos^2(\alpha_2) - \cos^2(\alpha_3)}{\sin(\alpha_3)\cos(\alpha_3) + \sin(\alpha_2)\cos(\alpha_2)} \right)$.
\end{prop}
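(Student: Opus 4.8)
The plan is to obtain explicit parametrizations of the two bounding curves $\gamma_{AB}$ and $\gamma_{AC}$ from Proposition \ref{pr.crit.tor.cyl} and Remark \ref{re.crit.tor.cyl}, then to locate their trace on the plane $\Pi^0$ and their mutual intersection; together these data cut out the special region and produce the two cases in the statement.

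First I would apply Remark \ref{re.crit.tor.cyl} to each chord. For $\gamma_{AB}=\mathbb{T}_{AB}^{\,\alpha_3}\cap \operatorname{Cyl}$ (see \eqref{eq.spes.curves}) the relevant chord is $AB$, with $A$ at polar angle $0$ and $B$ at polar angle $2\alpha_3$; taking $\theta=0$ in the Remark gives $\gamma_{AB}\colon z=2\sqrt{\cos(\alpha_3)\cos(\varphi-\alpha_3)}$, defined for $\varphi\in(\alpha_3-\pi/2,\alpha_3+\pi/2)$. For $\gamma_{AC}=\mathbb{T}_{AC}^{\,\alpha_2}\cap \operatorname{Cyl}$ the chord $AC$ has $C$ at polar angle $-2\alpha_2$ and $A$ at $0$; taking $\theta=-2\alpha_2$ gives $\gamma_{AC}\colon z=2\sqrt{\cos(\alpha_2)\cos(\varphi+\alpha_2)}$, defined for $\varphi\in(-\alpha_2-\pi/2,-\alpha_2+\pi/2)$. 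Each curve reaches $z=0$ exactly at the endpoints of its $\varphi$-interval, so $\gamma_{AB}$ touches $\Pi^0$ at $\varphi=\alpha_3-\pi/2$ and $\gamma_{AC}$ at $\varphi=-\alpha_2+\pi/2$; since $\alpha_2,\alpha_3\in(0,\pi/2)$ both endpoints lie in $(-\pi/2,\pi/2)$, which yields the asserted $\varphi$-range and its inclusion.

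Next I would determine the angle $\varphi_0$ at which the two curves cross. Equating the squared heights and expanding by the cosine addition formula reduces to the single linear condition
\begin{equation*}
(\cos^2\alpha_3-\cos^2\alpha_2)\cos\varphi+(\sin\alpha_3\cos\alpha_3+\sin\alpha_2\cos\alpha_2)\sin\varphi=0,
\end{equation*}
whose unique solution in $(-\pi/2,\pi/2)$ is precisely the stated $\varphi_0$ (the coefficient $\sin\alpha_3\cos\alpha_3+\sin\alpha_2\cos\alpha_2$ is positive, so division is legitimate). To place $\varphi_0$ inside the interval and to decide which curve bounds the region on each side, I would evaluate the difference of squared heights at the two ends: at $\varphi=\alpha_3-\pi/2$ one has $\gamma_{AB}=0$ while $\gamma_{AC}^2/4=\cos(\alpha_2)\sin(\alpha_2+\alpha_3)>0$, and at $\varphi=-\alpha_2+\pi/2$ one has $\gamma_{AC}=0$ while $\gamma_{AB}^2/4=\cos(\alpha_3)\sin(\alpha_2+\alpha_3)>0$. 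Thus $\gamma_{AB}^2-\gamma_{AC}^2$ passes from negative to positive across the interval, so its unique zero $\varphi_0$ is interior, with $\gamma_{AB}\le\gamma_{AC}$ for $\varphi\le\varphi_0$ and $\gamma_{AC}\le\gamma_{AB}$ for $\varphi\ge\varphi_0$.

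Finally, by Definition \ref{def.spec.reg} the special region is the portion of $\operatorname{Cyl}$ lying below both curves and above $\Pi^0$; combining the steps above, this region is $\{0\le z\le\min(\gamma_{AB},\gamma_{AC})\}$ over $\varphi\in[\alpha_3-\pi/2,-\alpha_2+\pi/2]$, which splits at $\varphi_0$ into the two cases of the statement. I expect the main obstacle to be conceptual rather than computational: correctly matching Definition \ref{def.spec.reg} to the minimum of the two height functions and verifying the crossover structure (uniqueness of $\varphi_0$ together with the ordering of the two curves on each side). The determination of $\varphi_0$ itself is routine, since equating the heights linearizes in $\tan\varphi$.
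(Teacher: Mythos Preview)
Your proposal is correct and follows essentially the same route as the paper: obtain the parametrizations of $\gamma_{AB}$ and $\gamma_{AC}$ from Proposition~\ref{pr.crit.tor.cyl} and Remark~\ref{re.crit.tor.cyl}, intersect their $\varphi$-domains to get $[\alpha_3-\pi/2,-\alpha_2+\pi/2]$, solve the linear equation in $\tan\varphi$ for the crossover $\varphi_0$, and check the ordering of the two height functions on each side. Your endpoint evaluation to verify the sign change of $\gamma_{AB}^2-\gamma_{AC}^2$ is slightly more explicit than the paper's treatment, which simply asserts the ordering without justification.
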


\begin{proof} Using Proposition \ref{pr.crit.tor.cyl} and Remark \ref{re.crit.tor.cyl}, we obtain the following equations for the curves
$\gamma_{AC}$ and $\gamma_{AB}$:
\begin{eqnarray*}
\gamma_{AB}(\varphi)=\left(\cos(\varphi),\sin(\varphi),2\sqrt{\cos(\alpha_3) \cos(\varphi-\alpha_3)}\right),&&  \varphi \in [\alpha_3-\pi/2, \alpha_3+\pi/2];\\
\gamma_{AC}(\varphi)=\left(\cos(\varphi),\sin(\varphi),2\sqrt{\cos(\alpha_2) \cos(\varphi+\alpha_2)}\right),&&  \varphi \in [-\alpha_2-\pi/2, -\alpha_2+\pi/2].
\end{eqnarray*}
Therefore, we have $\varphi \in [\alpha_3-\pi/2, -\alpha_2+\pi/2]$ for our special region.

It is easy to see that there is a unique point $\varphi_0 \in [\alpha_3-\pi/2, -\alpha_2+\pi/2]\subset(-\pi/2, \pi/2)$ such that
$2\sqrt{\cos(\alpha_3) \cos(\varphi_0-\alpha_3)}=2\sqrt{\cos(\alpha_2) \cos(\varphi_0+\alpha_2)}$. Indeed, such a point satisfies the following equation:
$$
\bigl(\cos^2(\alpha_3) - \cos^2(\alpha_2)\bigr) \cdot \cos(\varphi_0) + \bigl(\sin(\alpha_3)\cos(\alpha_3) + \sin(\alpha_2)\cos(\alpha_2)\bigr)\cdot \sin(\varphi_0)=0.
$$
Hence, $\varphi_0$ is determined uniquely by the condition
$\tan(\varphi_0)=\frac{\cos^2(\alpha_2) - \cos^2(\alpha_3)}{\sin(\alpha_3)\cos(\alpha_3) + \sin(\alpha_2)\cos(\alpha_2)}$.

Finally, it suffices to note that
$2\sqrt{\cos(\alpha_3) \cos(\varphi-\alpha_3)} < 2\sqrt{\cos(\alpha_2) \cos(\varphi+\alpha_2)}$ for $\varphi \in [\alpha_3-\pi/2,\varphi_0)$ and
$2\sqrt{\cos(\alpha_3) \cos(\varphi-\alpha_3)} > 2\sqrt{\cos(\alpha_2) \cos(\varphi+\alpha_2)}$ for $\varphi \in (\varphi_0,-\alpha_2+\pi/2]$.
The proposition is proved.
\end{proof}

\begin{example}\label{ex.imcyl.spec}
Fig. \ref{Cyl} shows examples of curves \eqref{eq.spes.curves}, when the base $ABC$ is an acute-angled, a right-angled or an obtuse-angled triangle.

For the regular $ABC$ we get the following parametrization:
\begin{equation*}
\gamma_{AC}(\varphi)=\left(\, \cos \varphi, \, \sin \varphi, \, \sqrt{\cos \varphi +\sqrt{3}\sin \varphi}\, \right),
\end{equation*}
where $\varphi\in[-\pi/6,5\pi/6]$.
The curves $\gamma_{BC}$ and $\gamma_{AB}$ are obtained by rotating $\gamma_{AC}$ around the third coordinate axis through the angles $\pm 2\pi/3$.

For an obtuse triangle $ABC$ with the $\angle BAC =\pi/6$, $\angle ABC =\pi/4$ and $\angle ACB =7\pi/12$ ($u=2-\sqrt{3}$ and $v=1$) we have
\begin{equation*}
 \gamma_{AC}(\varphi)=\left(\, \cos \varphi, \, \sin \varphi, \, \sqrt{2 \cos \varphi + 2 \sin \varphi} \, \right),\mbox{\,\,\,where\,\,\,}
\varphi\in\left[-\pi/4,3\pi/4\right];
\end{equation*}
\begin{equation*}
\gamma_{BC}(\varphi)=\left(\, \cos \varphi, \, \sin \varphi, \, \sqrt{-\sqrt{3} \cos \varphi + 3 \sin \varphi} \, \right),\mbox{\,\,\,where\,\,\,}
\varphi\in\left[{\pi}/{6},{7\pi}/{6}\right].
\end{equation*}

For an Egyptian triangle $ABC$ (it is a right-angled triangle whose lengths of sides form ratios $3 : 4 : 5$) with $u=0$ and $v=\frac{4}{3}$ we have
\begin{equation*}
\gamma_{AC}(\varphi)=\left(\, \cos \varphi, \, \sin \varphi, \, \frac{4}{5}\sqrt{4 \cos \varphi + 3\sin \varphi} \, \right),
\end{equation*}
where $\varphi\in[-\arctan(4/3),\pi - \arctan(4/3)]$;
\begin{equation*}
\gamma_{BC}(\varphi)=\left(\, \cos \varphi, \, \sin \varphi, \, \frac{2}{5}\sqrt{-9 \cos \varphi + 12 \sin \varphi} \, \right),
\end{equation*}
where $\varphi\in\left[\arctan(3/4),\arctan(3/4) + \pi\right]$.
\end{example}

In Fig.~\ref{Image_Cyl}, we show the cylinder images for the cases when the base $ABC$ is an acute-angled,
right-angled and obtuse-angled triangle, respectively.
In Fig.~\ref{Part_image_Cyl}, the parts of the cylinder image that are parts of the boundary $\BFT$ of $\CFT$ are depicted (see the last section for details).

Fig. \ref{Image0} shows the image of the set $\Pi^0 \setminus S$ under the map $F$, the lower part of the surface is the image of the set $\mathbb{B}_{-}$,
and the upper part is the image of the set $\mathbb{B}_{+}$.

\begin{figure}[t]
\begin{minipage}[h]{0.31\textwidth}
\center{\includegraphics[width=0.89\textwidth]{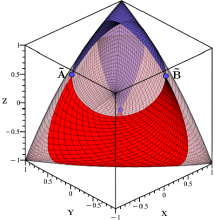}} \\ a)  \\
\end{minipage}
\quad
\begin{minipage}[h]{0.31\textwidth}
\center{\includegraphics[width=0.89\textwidth]{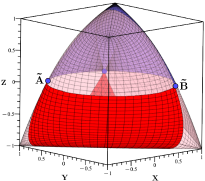}}   \\ b) \\
\end{minipage}
\quad
\begin{minipage}[h]{0.31\textwidth}
\center{\includegraphics[width=0.89\textwidth]{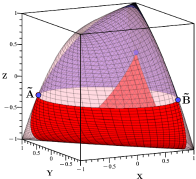}}   \\ c) \\
\end{minipage}
\begin{minipage}[h]{0.31\textwidth}
\center{\includegraphics[width=0.89\textwidth]{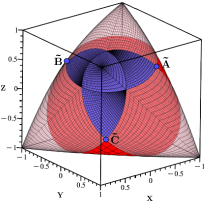}} \\ d)  \\
\end{minipage}
\quad
\begin{minipage}[h]{0.31\textwidth}
\center{\includegraphics[width=0.89\textwidth]{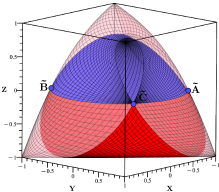}}   \\ e) \\
\end{minipage}
\quad
\begin{minipage}[h]{0.31\textwidth}
\center{\includegraphics[width=0.89\textwidth]{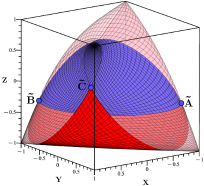}}   \\ f) \\
\end{minipage}
\caption{The image of $\Pi^0 \setminus S$ for:
a) and d) an acute-angled $\triangle ABC$; b) and e) a right-angled $\triangle ABC$; c) and f) an obtuse-angled $\triangle ABC$.
Here $\mathbb{BP}$ is drawn in orange, the set $F(\mathbb{B}_{-})$ is shown in red, and the set $F(\mathbb{B}_{+})$ is shown in purple.
The blue dots are the points $\widetilde{A}$, $\widetilde{B}$, and $\widetilde{C}$.}
\label{Image0}
\end{figure}

\medskip

\subsection{On regions bounded by special toroids}\label{subs.8.2}
Without loss of generality, we may suppose that $\angle   BAC =\alpha_1\leq \angle ABC =\alpha_2\leq \angle ACB=\alpha_3$.
It is clear that $\alpha_2 <\pi/2$.
\smallskip

Recall that $S$ is the  circumscribed circle for the triangle $ABC$, $\mathbb{B}_{-}$ is the open disc on the plane $\Pi^0$, bounded by $S$,
and $\mathbb{B}_{+}$ is the set of points outside $S$ (see \eqref{eq.decom.plane1}).
Denote by $W_1$, $W_2$,  and $W_3$ the bounded components of  $\Pi^+ \setminus \mathbb{T}_{BC}^{\,\alpha_1}$,
$\Pi^+ \setminus \mathbb{T}_{AC}^{\,\alpha_2}$, and $\Pi^+ \setminus \mathbb{T}_{AB}^{\,\alpha_3}$, respectively,
where $\mathbb{T}_{BC}^{\,\alpha_1}$,
$\mathbb{T}_{AC}^{\,\alpha_2}$, and $\mathbb{T}_{AB}^{\,\alpha_3}$ are special toroids.
\smallskip

If $\alpha_3 \geq \pi/2$, then $W_3 \subset W_1$ and $W_3 \subset W_2$. Therefore, $W_3 \bigcap W_i =W_3$, $i=1,2$, moreover, $W_3$
is bounded by $\mathbb{T}_{AB}^{\alpha_3}$ and
$S\bigcup \mathbb{B}_{-}$.
Note that $W_3$ does not contain points of the cylinder $\operatorname{Cyl}$ in this case.
Hence, the boundary of the image
$F(W_3)=F(W_3\cap W_1)=F(W_3\cap W_2)$ is contained in $F(\mathbb{B}_{-}) \bigcup \operatorname{Lim}(A)\bigcup \operatorname{Lim}(B)\bigcup \operatorname{Lim}(C)$.
Indeed,  $\operatorname{Cyl} \bigcap W_3=\emptyset$, therefore,
every point of $W_3$ is a non-degenerate point of the function $F$, hence it could not be a boundary point of $F(W_3)$ by Theorem \ref{theo.nondeg}.

Moreover, if $\alpha_3 > \pi/2$, then  $\mathbb{B}_{-}\bigcap \mathbb{T}_{AB}^{\,\alpha_3}$ is non-empty and is homeomorphic to the interval $(0,1)$,
hence, the set  $\operatorname{Lim}(C)\bigcap F(\mathbb{B}_{-})$ has the same property, see Fig.~\ref{Image0}~f) and Fig.~\ref{Image1}~f).
\smallskip

Now, we consider $i$ and $j$, such that $i\neq j$, $\alpha_i <\pi/2$, and $\alpha_j <\pi/2$.
In this case $W_i \bigcap W_j$ are bounded by two corresponding toroids and by $\Pi^0$.
The most important feature is that  $W_i \bigcap W_j$ contains some part of the cylinder $\operatorname{Cyl}$.
Indeed, the intersection $W_i \bigcap W_j \bigcap \operatorname{Cyl}$
is a special region on the cylinder $\operatorname{Cyl}$, see Definition \ref{def.spec.reg}.
Therefore, in this case  $F\bigl(W_i \bigcap W_j \bigcap \operatorname{Cyl}\bigr)$ can have a non-empty intersection
with the  boundary of  $F(W_i \bigcap W_j)$. Hence, the boundary of  $F(W_i \bigcap W_j)$ is contained in the union of
$F\bigl(\mathbb{B}_{-}\bigr)$, three solid ellipses
$\operatorname{Lim}(A)$, $\operatorname{Lim}(B)$, and $\operatorname{Lim}(C)$ (see Proposition \ref{pr.limitpoint.1}) and $F(\operatorname{Cyl})$.
\medskip

\subsection{On the region inside $\mathbb{BP}$}\label{subs.8.3}

As usual, we fix a base $ABC$ and suppose that $\angle BAC \leq \angle ABC \leq \angle ACB$.

\begin{lemma}\label{le.cont.b}
Suppose that $(x,y,z)\in  F(\Pi^0\setminus \{A,B,C\})\subset \mathbb{BP}$,
then for any $\varepsilon >0$ there exist points in $F(\Pi^{+})$ {\rm(}in particular, inside $\mathbb{BP}${\rm)} at distances to $(x,y,z)$ smaller than~$\varepsilon$.
\end{lemma}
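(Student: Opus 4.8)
The plan is to approximate the given point $(x,y,z)$ by images of genuinely non-degenerate tetrahedra, obtained simply by lifting a base-plane preimage slightly off the plane and invoking continuity of $F$. Since $(x,y,z)\in F(\Pi^0\setminus\{A,B,C\})$, I fix a point $D_0=(p_0,q_0,0)\in \Pi^0\setminus\{A,B,C\}$ with $F(D_0)=(x,y,z)$, and for $t>0$ I set $D_t=(p_0,q_0,t)\in \Pi^+$. The whole argument then reduces to showing $F(D_t)\to(x,y,z)$ as $t\to 0^{+}$, together with the observation that each $F(D_t)$ already lies in the interior of $\mathbb{P}$.

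First I would verify that $F$ is continuous at $D_0$ as a map on $\GT=\Pi^+ \bigcup \bigl(\Pi^0\setminus\{A,B,C\}\bigr)$. Because $D_0\notin\{A,B,C\}$, the three distances $d(D_0,A)$, $d(D_0,B)$, $d(D_0,C)$ are strictly positive, so in a neighborhood of $D_0$ the quantities $x,y,z$ appearing in \eqref{eq.twocoord.1} stay bounded away from $0$; hence each component of $F$, being the ratio in \eqref{eq.mapF.1}, is a quotient of continuous functions with nonvanishing denominator, and is therefore continuous at $D_0$. Consequently $F(D_t)\to F(D_0)=(x,y,z)$ as $t\to 0^{+}$, and given any $\varepsilon>0$ it suffices to pick $t>0$ small enough that $\|F(D_t)-(x,y,z)\|<\varepsilon$.

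It remains to record that each approximant $F(D_t)$ is not merely in $F(\Pi^+)$ but in fact lies in the interior of $\mathbb{P}$, i.e.\ strictly inside $\mathbb{BP}$. This is immediate from the volume formula \eqref{eq.vol.1}: since $D_t$ has positive third coordinate $r=t>0$, the tetrahedron $ABCD_t$ has strictly positive volume, so at $D_t$ we have $1+2\cos\overline{\alpha}\cos\overline{\beta}\cos\overline{\gamma}-\cos^2\overline{\alpha}-\cos^2\overline{\beta}-\cos^2\overline{\gamma}>0$, which is precisely the defining inequality for the interior of $\mathbb{P}$. The only point requiring any care is that the chosen preimage $D_0$ avoids the excluded vertices $A,B,C$, and this is guaranteed by the hypothesis $(x,y,z)\in F(\Pi^0\setminus\{A,B,C\})$; beyond this continuity bookkeeping there is no genuine obstacle, since any transversal lift direction (not only the vertical one chosen above) would serve equally well.
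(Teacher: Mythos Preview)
Your proof is correct and follows essentially the same approach as the paper: pick a preimage $D_0\in\Pi^0\setminus\{A,B,C\}$, approximate it by points in $\Pi^+$, and invoke continuity of $F$ at $D_0$. You supply a bit more detail than the paper (explicitly justifying continuity via nonvanishing denominators and the interior claim via the volume formula \eqref{eq.vol.1}), but the idea is identical.
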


\begin{proof} Suppose that $D=(p,q,0)\in \Pi^0\setminus \{A,B,C\}$ with $F(D)=(x,y,z)$.
It is clear that there is  a sequence of the points $(p_n,q_n,r_n)\in \Pi^{+}$ such that $(p_n,q_n,r_n) \rightarrow (p,q,0)$ as $n \to \infty$.
Using the fact that $F$ is continuous at every point of $\Pi^0\setminus \{A,B,C\}$,
we get that $F(p_n,q_n,r_n)\rightarrow F(p,q,0)=(x,y,z)$ as  $n \to \infty$. By definition, the points $F(p_n,q_n,r_n)$ are inside $\mathbb{BP}$. The lemma is proved.
\end{proof}
\smallskip

Let us consider the point
\begin{equation}\label{eq.importa.p}
\mathbb{R}^3\ni (x_0,y_0,z_0):=(\cos \angle BAC, \cos \angle ABC, \cos \angle ACB\Bigr).
\end{equation}

\smallskip

By Proposition \ref{pr.limitpoint.2},
the point $(x_0,y_0,z_0)$ is a common point of three planes containing
the limit solid ellipses $\operatorname{Lim}(A)$, $\operatorname{Lim}(B)$, and $\operatorname{Lim}(C)$
(the equations of these planes are  $x=x_0$, $y=y_0$, $z=z_0$, respectively).
Moreover, $(x_0,y_0,z_0)$ lies in the interior of $\mathbb{P}$, on the surface $\mathbb{BP}$, and outside of $\mathbb{P}$, according to as $\triangle ABC$ is acute-angled, right-angled or obtuse-angled, respectively (which is equivalent to $z_0$ >0, $z_0=0$, $z_0<0$, respectively).

Now, we consider eight open connected subsets of the following set:
$$
\left\{ (x,y,z)\in \mathbb{R}^3\,|\, x\neq x_0, y\neq y_0, z\neq z_0 \right\}.
$$
We will call it $\Xi(i,j,k)$, where $i=\sgn(x-x_0)$, $j=(y-y_0)$, $k=\sgn(z-z_0)$, and $(x,y,z)$ is any point in a given subset.
We are going to mark every  $\Xi(i,j,k)$ by the symbol ``$+$'' (respectively, ``$-$'') if $\Xi(i,j,k)$ contains
a point in the interior of the set $\CFT$ (respectively, a point in the interior of the set $\mathbb{R}^3 \setminus \CFT$).
If  $\Xi(i,j,k)$ contains some points of the interior of $\CFT$ as well as some points of $\mathbb{R}^3 \setminus \CFT$, then we mark it by symbol ``$\pm$''.
We will do it separately for the cases of acute-angled, right-angled and obtuse-angled base.

By Corollary \ref{cor.com.point}, the point $(x_0,y_0,z_0)$ is situated in the interior of the set $\CFT$ {\rm(}the closure of $F(\GT)${\rm)}
if the triangle $ABC$ is acute-angled (i.e., $z_0$ >0). Therefore, each region  $\Xi(i,j,k)$ contains
a point in the interior of the set $\CFT$. Moreover, in the acute-angled case we have
$F(\mathbb{B}_{-}) = \Xi(-,-,-) \bigcap \mathbb{BP}$, therefore, $\Xi(-,-,-)$ should be marked by ``$+$''.
It is easy to see also that
$$
\Bigl(\Xi(-,+,+)\bigcup \Xi(+,-,+)\bigcup \Xi(+,+,-) \bigcup \Xi(+,+,+) \Bigr) \bigcap \bigl(\mathbb{BP} \setminus \{(1,1,1)\}\bigr)\subset F(\mathbb{B}_{+}),
$$
therefore, $\Xi(-,+,+)$,  $\Xi(+,-,+)$, $\Xi(+,+,-)$, and $\Xi(+,+,+)$ should also be marked by ``$+$''.
On the other hand, $\Xi(-,-,+)$,  $\Xi(-,+,-)$, and $\Xi(+,-,-)$ should  be marked by ``$\pm$''.
This observation corresponds to the fact that the images of special regions on the cylinder $\operatorname{Cyl}$
(see Definition~\ref{def.spec.reg}) are in the subsets  $\Xi(-,-,+)$,  $\Xi(-,+,-)$, and $\Xi(+,-,-)$, see Subsections \ref{subs.8.1} and \ref{subs.8.2}.

{\scriptsize\begin{table}[t]
\caption{Markers of the regions $\Xi(i,j,k)$ for the acute-angled, right-angled, and obtuse-angled cases}
\centering
\begin{tabular}{|c|c| c | c | c | c | c | c | c |}
\hline
 &\! $(-,-,-)$ \!&\! $(-,-,+)$ \! &\! $(-,+,-)$\! & \! $(-,+,+)$ \! &\!$(+,-,-)$\! &\! $(+,-,+)$\!  &\!$(+,+,-)$\! &\! $(+,+,+)$\!
\\
\hline
$z_0>0$ &$+$&$\pm$&$\pm$&$+$&$\pm$&$+$&$+$&$+$\\
\hline
$z_0=0$ &$+$&$\pm$&$-$&$+$&$-$&$+$&$-$&$+$ \\
\hline
$z_0<0$ &$+$&$\pm$&$-$&$+$&$-$&$+$&$-$&$+$ \\
\hline
\end{tabular}\label{Omega.reg}
\end{table}}

The cases of right-angled and obtuse-angled bases can be considered similarly.  The principal feature of these cases is that  only
$\Omega(-,-,+)$ should  be marked by ``$\pm$''. It is related to the fact that there is only one
special region on the cylinder $\operatorname{Cyl}$ for these cases, see Subsection \ref{subs.8.1}. All corresponding computations are included in Table \ref{Omega.reg}.

\section{The main construction}\label{sec.main}

Now, we are going to construct the boundary of $\operatorname{CFT}$ for any given base $ABC$.
Recall that $\GT=\Pi^+ \bigcup \bigl(\Pi^0 \setminus \{A,B,C\} \bigr)$, where
$\Pi^+=\{(p,q,r) \in \mathbb{R}^3\,|\, r>0\}$ and $\Pi^0=\{(p,q,0) \in \mathbb{R}^3\}$.
Our main goal is to describe the image $F(\GT)$ under the map $F:\GT \to \mathbb{R}^3$, see \eqref{eq.fmain}.

To do it we need to study the boundary $\BFT$ of the closure $\CFT$ of $F(\GT)$ in the standard topology of $\mathbb{R}^3$.
We know that (see Theorem \ref{theo.1})
$$
F(\GT)\subset \mathbb{P}=\left\{(x,y,z)\in [-1,1]^3\,|\,1+2xyz-x^2-y^2-z^2 \geq 0 \right\},
$$
hence, $\BFT \subset \CFT \subset  \mathbb{P}$. Moreover, $F\bigl(\Pi^0 \setminus \{A,B,C\}\bigr) \subset \mathbb{BP}$, where
$\mathbb{BP}$ is the boundary of $\mathbb{P}$, see \eqref{eq.pillowc}.

We know also that
the closure of $F\Bigl(\Pi^0\setminus \{A,B,C\}\Bigr)$ is a subset of
$$
\mathbb{BP}=\left\{(x,y,z)\in [-1,1]^3\,|\,1+2xyz-x^2-y^2-z^2 = 0 \right\}
$$
and it has the form
$F\Bigl(\Pi^0\setminus \{A,B,C\}\Bigr)\bigcup E_A \bigcup E_B \bigcup E_C$,
where $E_A, E_B, E_C$ are ellipses, that
are the intersections of the surface $\mathbb{BP}$
with the planes $x=\cos \angle BAC$, $y=\cos \angle ABC$, $z=\cos \angle ACB$ respectively (see Proposition \ref{pr.closure.plane.1}).
Recall that the limit solid ellipses $\operatorname{Lim}(A)$, $\operatorname{Lim}(B)$, $\operatorname{Lim}(C)$
are the convex hulls of $E_A, E_B, E_C$ respectively, see Proposition \ref{pr.limitpoint.1}.

\begin{figure}[t]
\begin{minipage}[h]{0.31\textwidth}
\center{\includegraphics[width=0.89\textwidth]{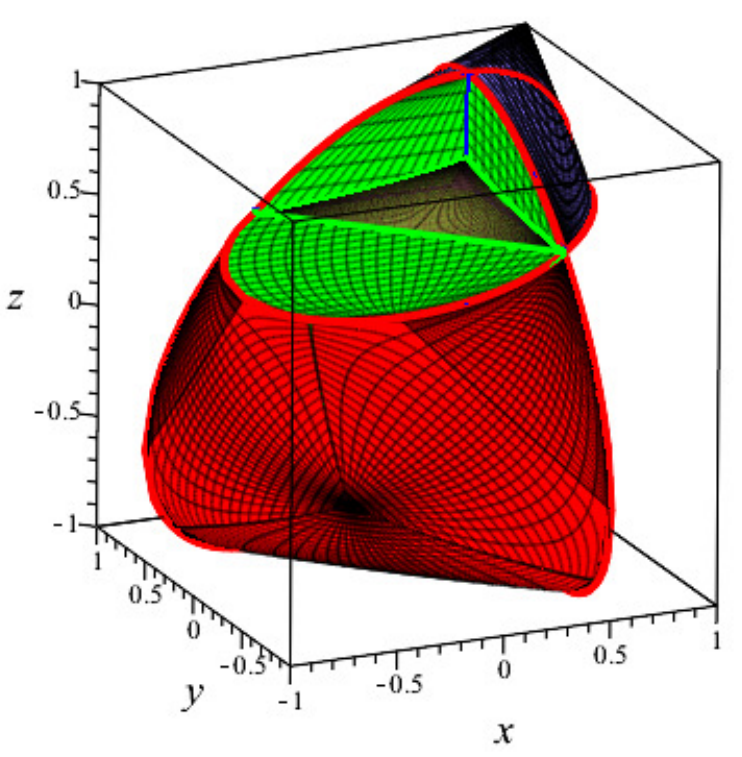}} \\ a)  \\
\end{minipage}
\quad
\begin{minipage}[h]{0.31\textwidth}
\center{\includegraphics[width=0.89\textwidth]{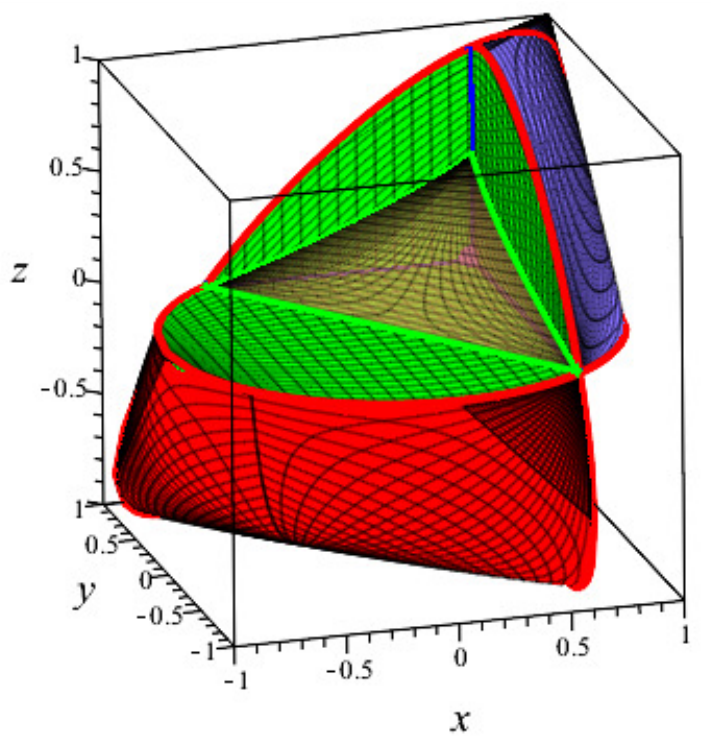}}   \\ b) \\
\end{minipage}
\quad
\begin{minipage}[h]{0.31\textwidth}
\center{\includegraphics[width=0.89\textwidth]{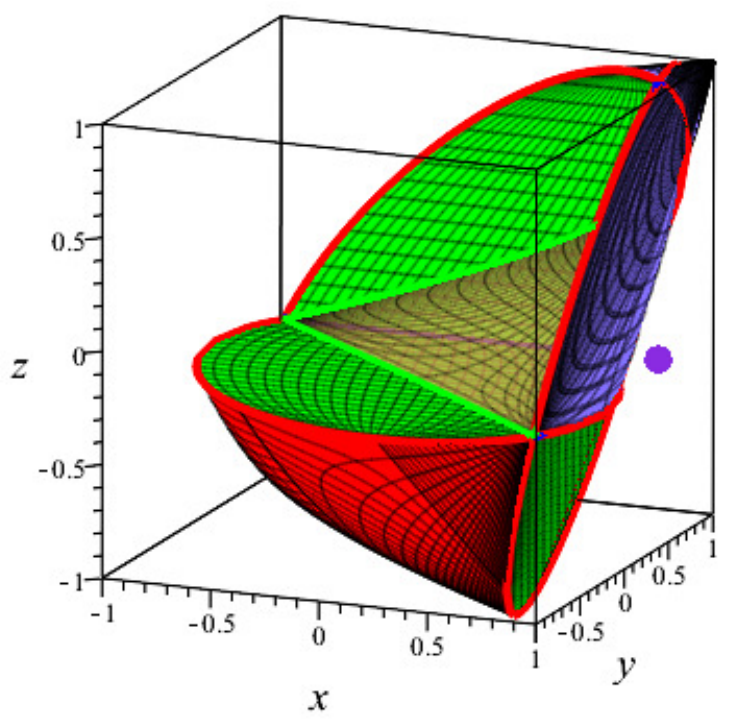}}   \\ c) \\
\end{minipage}
\begin{minipage}[h]{0.31\textwidth}
\center{\includegraphics[width=0.89\textwidth]{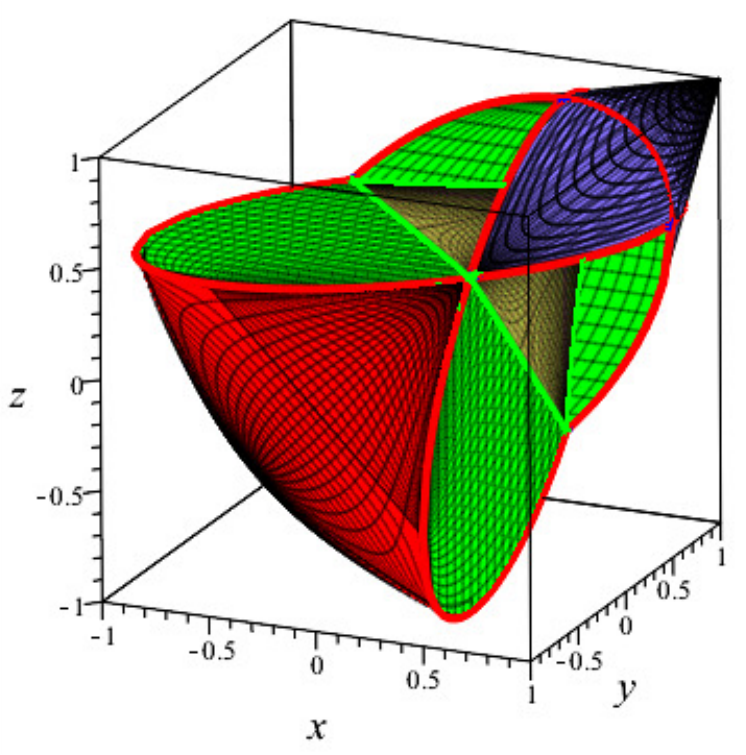}} \\ d)  \\
\end{minipage}
\quad
\begin{minipage}[h]{0.31\textwidth}
\center{\includegraphics[width=0.89\textwidth]{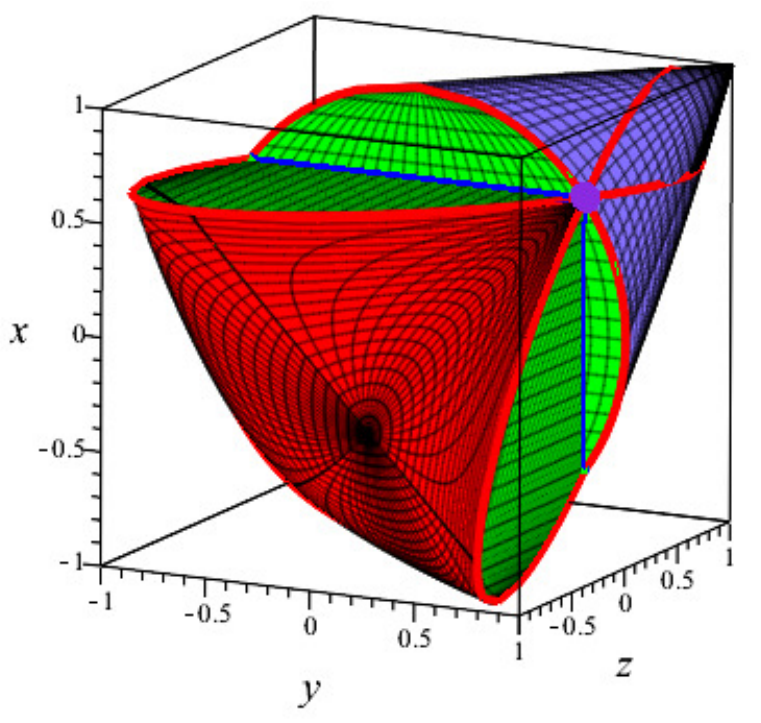}}   \\ e) \\
\end{minipage}
\quad
\begin{minipage}[h]{0.31\textwidth}
\center{\includegraphics[width=0.89\textwidth]{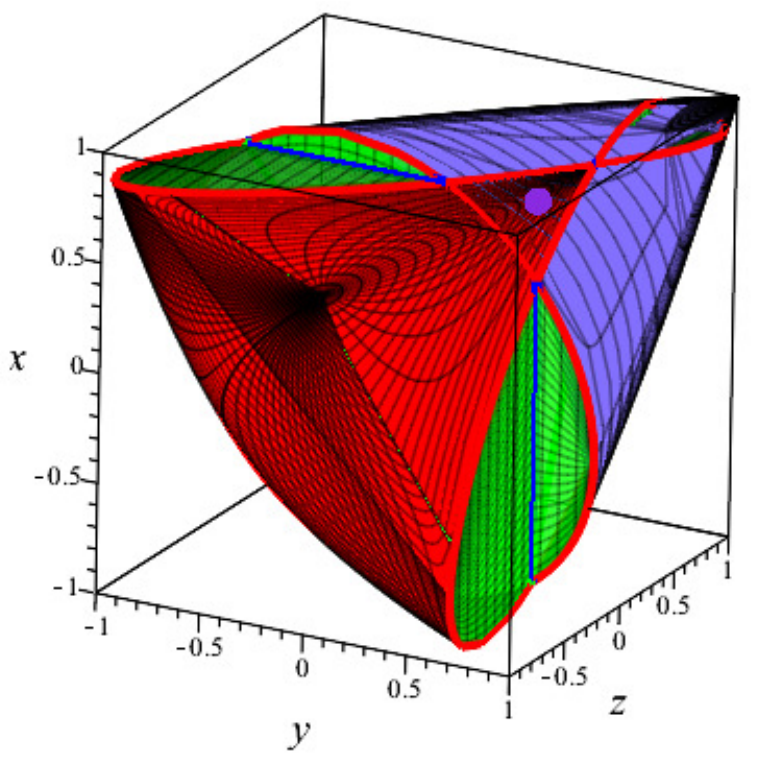}}   \\ f) \\
\end{minipage}
\caption{Boundary of $\CFT$ for:
a) and d) an acute-angled $\triangle ABC$; b) and e) a right-angled $\triangle ABC$; c) and f) an obtuse-angled $\triangle ABC$.
Here the set $F(\mathbb{B}_{-})$ is shown in red, and the set $F(\mathbb{B}_{+})$ is shown in purple. Parts of the limit solid ellipses are shown in green,
the images of special regions of the cylinder $\operatorname{Cyl}$ under the map $F$
are drawn in brown. The purple point is the intersection point of three planes containing three limit solid ellipses.}
\label{Image1}
\end{figure}

The point $I_{\triangle ABC}:=\Bigl( \cos \angle BAC, \cos \angle ABC, \cos \angle ACB\Bigr)$
is the intersection of three planes, spanned by the ellipses $E_A, E_B, E_C$, respectively.
This point is inside of $\mathbb{BP}$, on $\mathbb{BP}$, or outside of $\mathbb{BP}$ exactly in the case, when
$\triangle ABC$ is acute-angled, right-angled, or obtuse-angled, respectively.
Moreover,
\begin{eqnarray*}
\widetilde{A}&:=&\Bigl(-\cos \angle BAC, \cos \angle ABC, \cos \angle ACB\Bigl) \in E_B\bigcap E_C,\\
\widetilde{B}&:=&\Bigl(\cos \angle BAC, -\cos \angle ABC, \cos \angle ACB\Bigl) \in E_A\bigcap E_C,\\
\widetilde{C}&:=&\Bigl(\cos \angle BAC, \cos \angle ABC, -\cos \angle ACB\Bigl) \in  E_A\bigcap E_B.
\end{eqnarray*}

All the above observations lead to the following important result.

\begin{theorem}\label{final.boundary}
For any given base $ABC$, the closure  $\operatorname{CFT}$ of  $F(\GT)$ is such that $\operatorname{CFT}\subset \mathbb{P}$
and is homeomorphic to  to a $3$-dimensional closed ball, while
the boundary $\operatorname{BFT}$ of  $\operatorname{CFT}$ is homeomorphic to a $2$-dimensional sphere $S^2$.
Moreover, $\operatorname{BFT}\bigcap \mathbb{BP}$ is the union
of $F(\mathbb{B}_{-})$,  $F(\mathbb{B}_{+})$, $\{(1,1,1)\}$, $E_A$, $E_B$, and $E_C$,
while $\operatorname{BFT}\setminus \mathbb{BP}$ is the union of several connected subsets of
the limit solid ellipses $\operatorname{Lim}(A)$, $\operatorname{Lim}(B)$, and $\operatorname{Lim}(C)$, and the images of  special regions on the cylinder
$\operatorname{Cyl}$.
\end{theorem}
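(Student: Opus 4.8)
The plan is to determine the boundary $\operatorname{BFP}=\BFT$ explicitly as a set, recognize it as a tamely embedded $2$-sphere, and then invoke the Jordan--Brouwer separation theorem together with the Schoenflies theorem in $\mathbb{R}^3$ to conclude that the region it bounds, namely $\operatorname{CFP}=\CFT$, is a closed $3$-ball (whose boundary is then automatically $S^2$). The inclusion $\CFT\subset\mathbb{P}$ is Theorem~\ref{theo.1}, and Proposition~\ref{pr.closure.space.1} gives the decomposition $\CFT=F(\Pi^+)\cup F(\Pi^0\setminus\{A,B,C\})\cup\{(1,1,1)\}\cup\operatorname{Lim}(A)\cup\operatorname{Lim}(B)\cup\operatorname{Lim}(C)$, which I would use as the master list of candidate boundary points. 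By Theorem~\ref{theo.nondeg}, $F(\Pi^+\setminus\operatorname{Cyl})$ is open and consists of interior points of $\CFT$, so all boundary points must come from the degenerate locus, which by Proposition~\ref{pr.nondeg.main} is $\Pi^0\cup\operatorname{Cyl}$ (together with the glued-in solid ellipses).

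First I would isolate $\BFT\cap\mathbb{BP}$. Since every $D\in\Pi^+$ yields a tetrahedron of positive volume, formula~\eqref{eq.vol.1} shows $F(\Pi^+)\subset\operatorname{int}\mathbb{P}$; hence the only points of $\CFT$ meeting $\mathbb{BP}=\partial\mathbb{P}$ arise from $F(\Pi^0\setminus\{A,B,C\})$ (Lemma~\ref{le.inpl.1}) and from the bounding ellipses of the solid ellipses. Using Proposition~\ref{pr.closure.plane.1} and item~5 of Proposition~\ref{pr.prop.pc} (which identifies $E_A,E_B,E_C$ as plane sections of $\mathbb{BP}$), this gives $\CFT\cap\mathbb{BP}=F(\mathbb{B}_{-})\cup F(\mathbb{B}_{+})\cup E_A\cup E_B\cup E_C$ (with $\widetilde A,\widetilde B,\widetilde C$ and the limit $F(\infty)=(1,1,1)$ absorbed into these sets). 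As $\CFT\subset\mathbb{P}$, every point of $\CFT\cap\mathbb{BP}$ is automatically a boundary point, so this set is exactly $\BFT\cap\mathbb{BP}$, establishing the first half of the ``Moreover'' claim. I would note that in the obtuse case $F(\mathbb{B}_{-})$ and $F(\mathbb{B}_{+})$ overlap by Proposition~\ref{pr.tops.inter}, which is harmless set-theoretically since the overlap still lies on $\mathbb{BP}$.

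Next I would determine $\BFT\setminus\mathbb{BP}$, the boundary points lying in $\operatorname{int}\mathbb{P}$. By Theorem~\ref{theo.nondeg} these must be images of degenerate points, hence come from $\operatorname{Cyl}\cap\Pi^+$ or from the interiors of the solid ellipses. Corollary~\ref{cor.cyl.deg} says that $F(D)$ with $D\in\operatorname{Cyl}\cap\Pi^+$ is a boundary point unless it is also the image of a non-degenerate point of $\Pi^+\setminus\operatorname{Cyl}$, so the surviving boundary portion of the cylinder image is governed by the double-covering structure of Section~\ref{sec.cyl.special}. Here I would invoke the special regions of Definition~\ref{def.spec.reg} and the critical-toroid descriptions of Propositions~\ref{pr.crit.tor.cyl} and~\ref{pr.crit.tor.cyl.2}: the images of precisely the special regions remain on $\BFT$, while the rest of $F(\operatorname{Cyl}\cap\Pi^+)$ is reabsorbed into the interior. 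Organizing the local ``inside versus outside'' information around the point $I_{\triangle ABC}=(\cos\angle BAC,\cos\angle ABC,\cos\angle ACB)$ via the octant markers of Table~\ref{Omega.reg}, and treating the acute, right, and obtuse cases separately, one reads off exactly which connected subsets of $\operatorname{Lim}(A),\operatorname{Lim}(B),\operatorname{Lim}(C)$ and which special-region images constitute $\BFT\setminus\mathbb{BP}$.

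Finally I would assemble the pieces and identify the global topology. Starting from the immersed $S^2$ of Section~\ref{sec.limit.point} (the one-point compactification $\Pi^0\cup\{\infty\}$ with $\operatorname{Lim}(A),\operatorname{Lim}(B),\operatorname{Lim}(C)$ glued in), one performs the surgeries dictated by the previous step: along each special region the outer sheet is replaced by the corresponding cylinder-image cap, and checking the resulting gluing pattern (equivalently, the Euler characteristic) shows the assembled closed surface is $S^2$; the piecewise-smooth parametrizations supplied above show the embedding is locally flat, hence tame. Then Jordan--Brouwer gives that $\BFT$ separates $\mathbb{R}^3$ into a bounded component $U$ and an unbounded one, Schoenflies gives that $\overline{U}$ is a closed $3$-ball, and a standard argument (using that $U$ is connected and that every boundary point is a limit of interior points, the latter via Lemma~\ref{le.cont.b}) identifies $\CFT=\overline{\operatorname{int}\CFT}=\overline{U}$, so $\CFT$ is a $3$-ball. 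I expect the main obstacle to be the identification of $\BFT\setminus\mathbb{BP}$ combined with the embeddedness check: verifying that no spurious boundary pieces appear, that the special-region images abut the solid ellipses and the $\mathbb{BP}$-part exactly along $E_A,E_B,E_C$ and the edges through $\widetilde A,\widetilde B,\widetilde C$ so that the surface closes up as a locally flat $S^2$, and carrying this through uniformly in all three base types, the obtuse case being most delicate because of the $F(\mathbb{B}_{-})\cap F(\mathbb{B}_{+})$ overlap.
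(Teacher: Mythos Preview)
Your proposal follows essentially the same route as the paper: you locate $\BFT$ by first discarding $F(\Pi^+\setminus\operatorname{Cyl})$ via Theorem~\ref{theo.nondeg}, then split the remaining candidates into the $\mathbb{BP}$-part (handled by Lemma~\ref{le.inpl.1} and Proposition~\ref{pr.closure.plane.1}) and the interior-of-$\mathbb{P}$ part (handled by Corollary~\ref{cor.cyl.deg}, the special regions of Definition~\ref{def.spec.reg}, and the octant markers of Table~\ref{Omega.reg}). This matches the paper's argument and its case split on acute/right/obtuse bases.

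The one noteworthy difference is your final topological step. The paper, after identifying the pieces, simply says ``we get the required result immediately'' and does not explain why the assembled boundary is an embedded $S^2$ or why $\CFT$ is a $3$-ball. Your plan to verify local flatness of the piecewise-smooth boundary and then invoke Jordan--Brouwer together with the $3$-dimensional Schoenflies theorem is a genuine strengthening of the exposition: it supplies the missing justification for the ``homeomorphic to a closed $3$-ball'' claim. You are also right that the delicate point is the embeddedness check (no residual self-intersections after the special-region surgeries, and correct matching along $E_A,E_B,E_C$ and the segments $[\widetilde A,\widetilde B]$, $[\widetilde B,\widetilde C]$, $[\widetilde A,\widetilde C]$); the paper leaves this largely implicit, so if you carry it out carefully your write-up will be more complete than the original on this point.
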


\begin{proof} Let us recall that $F\Bigl(\Pi^0\setminus \{A,B,C\}\Bigr)\subset \mathbb{P} \bigcap F(\GT)$ and (see Proposition \ref{pr.closure.space.1})
\begin{eqnarray*}
&&\operatorname{BFP}:=F\Bigl(\Pi^0\setminus \{A,B,C\}\Bigr)\bigcup \{(1,1,1)\}\bigcup \operatorname{Lim}(A) \bigcup \operatorname{Lim}(B) \bigcup \operatorname{Lim}(C)\\
&&\subset \CFT=F\Bigl(\Pi^+ \bigcup \Pi^0\setminus \{A,B,C\}\Bigr) \bigcup \{(1,1,1)\}\bigcup \operatorname{Lim}(A) \bigcup \operatorname{Lim}(B) \bigcup \operatorname{Lim}(C).
\end{eqnarray*}

Note also that $\operatorname{BFP}$
is a closed surface in $\mathbb{R}^3$ with self-intersections (it is an immersion of $S^2$, as it was discussed in the final part of Section \ref{sec.limit.point}).
In particular, it bounds one of several open connected subsets of $\mathbb{R}^3$.

On the other hand, $\BFT$ cannot consist only of points belonging to $\operatorname{BFP}$,
since $F(\operatorname{Cyl}\setminus \Pi^0)$ is not located inside the sets bounded by the surface $\operatorname{BFP}$
(see Table \ref{Omega.reg} and related discussions).

According to Theorem \ref{theo.nondeg} and Corollary \ref{cor.cyl.noi},
only some bounded part of the cylinder $\operatorname{Cyl}$ is mapped into the set $\BFT$ under the mapping $F$.
Indeed, there is a real number $M$ such that for any $r>M$ the image of any point
$D=(p,q,r)\in \operatorname{Cyl}\bigcap \Pi^+ =S\times (0,\infty)$, $p^2+q^2=1$, is not on the boundary $\BFT$ of $\CFT$.

Since $\operatorname{Cyl}\bigcap \mathbb{B}_{-} =\emptyset$ and $\operatorname{Cyl}\bigcap \mathbb{B}_{+} =\emptyset$,
therefore some part of $F(\operatorname{Cyl})$ can be a part of $\operatorname{BFT}$, only if its boundary is the union of some curves in
the limit solid ellipses $\operatorname{Lim}(A)$, $\operatorname{Lim}(B)$, and $\operatorname{Lim}(C)$ (see Proposition \ref{pr.limitpoint.1}).
It should be noted that some part of any of the limit segments $[\widetilde{B},\widetilde{C}]$, $[\widetilde{A},\widetilde{C}]$, $[\widetilde{A},\widetilde{B}]$ may be part of such a boundary
(see Proposition \ref{pr.limitpoint.c1}).
Thus, the most important role in this construction is played by curves that are the intersections of the image of the cylinder
$\operatorname{Cyl}\bigcap \Pi^+$ with the limit solid ellipses $\operatorname{Lim}(A)$, $\operatorname{Lim}(B)$, and $\operatorname{Lim}(C)$.

Based on all of the above, we come to the conclusion that such parts of the image of a cylinder $\operatorname{Cyl}$
can only be the images of {\it special regions} on this cylinder
(see Definition~\ref{def.spec.reg}, Proposition \ref{pr.crit.tor.cyl.2}, and Fig.~\ref{Part_image_Cyl}).
All other points of $\BFT$ are  some points of $\operatorname{BFP}$.
Using the reasoning in the previous section (see, e.g., Table \ref{Omega.reg}), we get the required result immediately.
\end{proof}

\begin{remark} The above results imply also that $\mathbb{P}\setminus \operatorname{CFT}$ is a disjoint union of three convex sets for any base $ABC$, see
Fig.~\ref{firstimage} for details.
\end{remark}

\smallskip

Next we will discuss the features of constructing the body $\operatorname{CFT}$ and the surface $\operatorname{BFT}$
in the cases of acute-angled, right-angled and obtuse-angled base.

\smallskip

{\bf Let us suppose that the triangle $ABC$ is acute-angled.}
In this case we can apply
Corollary~\ref{cor.com.point}, which implies that
then the point
$$
I_{\triangle ABC}=\Bigl(\cos \angle BAC, \cos \angle ABC, \cos \angle ACB\Bigr)
$$
is contained in the interior of the set $\CFT$ {\rm(}the closure of $F(\GT)${\rm)}.
Moreover, $I_{\triangle ABC}$ is the intersection of three solid ellipses
$\operatorname{Lim}(A)$, $\operatorname{Lim}(B)$, and $\operatorname{Lim}(C)$.
It is clear that $\BFT$ (the boundary of $\CFT$) should contain the images of three special regions on $\operatorname{Cyl}$,
see Subsections \ref{subs.8.1} and \ref{subs.8.3}.

\smallskip

{\bf Let us suppose that the triangle $ABC$ is obtuse-angled.}
It is clear that $\BFT$ (the boundary of $\CFT$) should contain the image of a unique special region on $\operatorname{Cyl}$,
see Subsections \ref{subs.8.1} and \ref{subs.8.3}.

\smallskip

{\bf Finally, let us suppose that the triangle $ABC$ is right-angled.}
As in the previous case $\BFT$ (the boundary of $\CFT$) should contain the image of a unique special region on $\operatorname{Cyl}$,
see Subsections \ref{subs.8.1} and \ref{subs.8.3}.
But in this case the point
$I_{\triangle ABC}=\Bigl(\cos \angle BAC, \cos \angle ABC, \cos \angle ACB\Bigr)$
is situated on $\BFT$ (the boundary of $\CFT$).
\smallskip

All these properties are illustrated in Fig. \ref{Image1},
where the boundary of $\CFT$ is drawn for an
acute-angled base $ABC$, a right-angled base $ABC$, and for an obtuse-angled base $ABC$.

\smallskip

Theorem~\ref{final.boundary} gives a complete description
the closure  $\operatorname{CFT}$ of  $F(\GT)$ for any given base $ABC$.
It is clear that
the interior $\operatorname{int} (\operatorname{CFT})$ is a subset of $F(\Pi^+)$. Using the obtained results, we can give an answer to Problem~\ref{prob.1}.
Note that a point of the boundary $\operatorname{bd} (\operatorname{CFT})$ can also be contained  in $F(\Pi^+)$.
This possibility is described in Theorem~\ref{theo.nondeg} and Corollary~\ref{cor.cyl.deg} as follows.

Let consider the set $\operatorname{SRT}$ of points $D \in \operatorname{Cyl}$ such that $D$ is either from some special region of $\operatorname{Cyl}$
or from the boundary of a special region in $\operatorname{Cyl}\bigcap \Pi^+$ (see Definition \ref{def.spec.reg}).
Then we have the following result:
\begin{equation}\label{eq.main}
\Sigma(\triangle ABC)=F(\Pi^+)=\operatorname{int} (\operatorname{CFT}) \bigcup F(\operatorname{SRT}).
\end{equation}

We hope that this (somewhat unusual) formula will be useful in studying related problems.
It should be noted that for a right-angled base and for an obtuse-angled base, the set $F(\operatorname{SRT})$
is homeomorphic to a triangle with a side removed, while for an acute-angled base,
the set $F(\operatorname{SRT})$ is homeomorphic to a disjoint union of three triangles, each of which is deprived of one of its sides (see Fig.~\ref{Part_image_Cyl}).
Recall that the sets $\operatorname{int} (\operatorname{CFT})$ and $\operatorname{bd} (\operatorname{CFT})$
are homeomorphic to an open three-dimensional ball and a two-dimensional sphere, respectively.

\vspace{15mm}

\vspace{5mm}

\end{document}